\documentclass[11pt]{article}

\usepackage[normalem]{ulem} 
\usepackage[utf8]{inputenc}
\usepackage[T1]{fontenc}
\usepackage{lmodern}
\usepackage{microtype}
\usepackage{verbatim} 
\usepackage[margin=1.1in]{geometry}
\usepackage{amsmath, amssymb, amsthm, amsfonts}
\usepackage{mathtools}
\usepackage{mathrsfs}
\usepackage{textcomp}
\usepackage{extarrows}
\usepackage{xcolor}
\usepackage{graphicx}
\usepackage{tikz-cd}
\usepackage{scalerel}
\usepackage{pict2e}
\usepackage{tkz-euclide}
\usetikzlibrary{calc,patterns,arrows,shadows,external,bending}
\usepackage{pgfplots}
\usepgfplotslibrary{statistics,fillbetween}
\pgfplotsset{compat=1.18}
\usepackage{mdframed}
\usepackage{natbib}
\usepackage{import}
\usepackage{xifthen}
\usepackage{pdfpages}
\usepackage{transparent}
\usepackage{enumitem}

\usepackage{todonotes}
\usepackage{listings}
\newtheoremstyle{italbody}
  {}{}        
  {\itshape}  
  {}          
  {\bfseries} 
  {.}
  { }         
  {}

\theoremstyle{italbody}
\newtheorem{theorem}{Theorem}[section]
\newtheorem{lemma}[theorem]{Lemma}
\newtheorem{proposition}[theorem]{Proposition}
\newtheorem{corollary}[theorem]{Corollary}

\theoremstyle{definition}
\newtheorem{definition}[theorem]{Definition}
\newtheorem{remark}[theorem]{Remark}
\newtheorem{assumption}[theorem]{Assumption}
\newtheorem{example}[theorem]{Example}

\newtheorem{setup}[theorem]{Setup}

\newcommand{\Z}{\mathbb{Z}}

\newcommand{\C}{\mathbb{C}}
\newcommand{\R}{\mathbb{R}}

\newcommand{\opp}{\textnormal{op}}

\newcommand{\ree}{\mathrm{Re}}
\newcommand{\calA}{\mathcal{A}}
\newcommand{\calB}{\mathcal{B}}
\newcommand{\calC}{\mathcal{C}}
\newcommand{\calD}{\mathcal{D}}
\newcommand{\calO}{\mathcal{O}}
\newcommand{\calT}{\mathcal{T}}
\newcommand{\calP}{\mathcal{P}}
\newcommand{\calF}{\mathcal{F}}

\newcommand{\bounded}{\mathcal{D}^b}

\newcommand{\glue}{\textnormal{gl}(\calA_0,\calB_0)}

\newcommand{\idd}{\textnormal{id}}

\newcommand{\Vect}{\textnormal{Vect}^{fd}_k}
\newcommand{\gl}{\textnormal{gl}}

\newcommand{\coim}{\operatorname{coim}}

\newcommand{\im}{\operatorname{im}}

\newcommand{\Coh}{\operatorname{Coh}}

\newcommand{\Stab}{\operatorname{Stab}}
\newcommand{\End}{\operatorname{End}}

\newcommand{\Hom}{\mathrm{Hom}}
\newcommand{\Ext}{\operatorname{Ext}}
\newcommand{\modd}{\mathrm{mod}\text{-}}
\newcommand{\Cone}{\mathrm{Cone}}
\newcommand{\LRM}{\mathrm{L}}
\newcommand{\RRM}{\mathrm{R}}

\usepackage[
    hyperindex,
    breaklinks,
    colorlinks=true,
    linkcolor=blue,
    citecolor=blue,
    urlcolor=blue
]{hyperref}

\hypersetup{
    colorlinks=true,
    linkcolor=blue,
    linktoc=section
}

 

\newcommand{\Addresses}{%
  \bigskip
  \footnotesize

  (C. Evangelatos) \textsc{Department of Mathematics and Computer Science, Centre for Quantum Mathematics, University of Southern Denmark, Campusvej 55, 5230 Odense, Denmark}\par\nopagebreak
  \textit{E-mail:} \texttt{cevan@imada.sdu.dk}\par\medskip

  (F. Haiden) \textsc{Department of Mathematics and Computer Science, Centre for Quantum Mathematics, University of Southern Denmark, Campusvej 55, 5230 Odense, Denmark}\par\nopagebreak
  \textit{E-mail:} \texttt{fab@sdu.dk}%
}

\begin{document}
\title{Gluing abelian categories and stability conditions}

\author{Charalampos Evangelatos\and Fabian Haiden}

\date{}

\maketitle

\begin{abstract}
We study the support property for stability conditions obtained by gluing along semiorthogonal decompositions. 
Our main tool is a new gluing construction for abelian categories along a bimodule: under natural exactness assumptions the glued category is abelian, and it recovers the heart obtained by gluing $t$-structures.
This gives a concrete description of objects in the glued heart which we use to prove the support property given a phase gap for glued semistable objects. In particular, we prove the support property assuming Collins--Polishchuk phase bounds.
As applications, we construct stability conditions on dg-comma categories, including examples related to augmented curves in the sense of Alexeev and Kuznetsov, and analyze the behavior of the construction under mutations of semiorthogonal decompositions.
\end{abstract}

\tableofcontents

\section{Introduction}

Stability conditions on triangulated categories were introduced by Bridgeland~\cite{bridgeog} two decades ago and have since become a very active topic in algebraic geometry and the representation theory of algebras.
One of the key tools in the subject, introduced by Collins and Polishchuk in~\cite{gluing1}, is the gluing of stability conditions: Given a semiorthogonal decomposition $\calD=\langle\calA,\calB\rangle$ and stability conditions on the components $\calA$ and $\calB$, the problem is to construct a compatible stability condition on $\calD$. 
Collins and Polishchuk identify conditions on the stability conditions on the components which ensure existence of the glued stability condition.

Thanks to the ubiquity of semiorthogonal decompositions, the results of~\cite{gluing1} have found many applications. 
Some recent ones include coherent systems \cite{systems}, holomorphic triples \cite{holomorphic}, morphisms in a category and deformation of gluing \cite{kawatani,morphisms}, the noncommutative minimal model program towards certain varieties \cite{karube,mmp}, augmented stability conditions and quasi-convergence of paths of stability conditions \cite{augmented,paths}, ruled and Hirzebruch surfaces \cite{ruledsurfaces,hirzebruch}, product type stability conditions on curves \cite{pprod}, and non-existence of stability conditions on a certain partially wrapped Fukaya category~\cite{haiden}.

The main goal of this paper is to revisit the general gluing construction and to clarify the following two points:

\begin{itemize}
    \item \textbf{Glued heart.} In~\cite{gluing1}, the heart of the glued stability condition is defined as an extension closure inside the triangulated category $\calD$. We provide a concrete description of the glued heart in terms of a general construction of gluing abelian categories along a bimodule. This construction appears not to have been previously recorded in the literature (for abelian categories) and is of independent interest.
    \item \textbf{Support property.} This condition was introduced by Kontsevich and Soibelman~\cite{support} and it is nowadays standard to include it in the definition of a stability condition. The early paper~\cite{gluing1} instead relies on a weaker condition of being \textit{reasonable}, which in turn implies Bridgeland's \textit{locally finite}. Thus, papers which appeal to~\cite{gluing1} include separate arguments to establish the support property. In some cases, it holds trivially, because there are no new stable objects, while in others the proof involves the particular geometry of the problem.
    Here, we show that the support property for the glued stability condition follows already from certain phase-bound conditions in~\cite{gluing1}, as well as under more general phase bounds.
\end{itemize}

The two points are in fact linked. The bimodule description of the glued heart is used in the proof of the support property.
In the following, we discuss our results in more detail.

\subsection{Gluing abelian categories along bimodules} \label{briefoutline}

\begin{definition}(= Definition \ref{abimod})
Let $\calA,\calB$ be $R$-linear additive categories. An $\calA\!-\!\calB$ \textit{bimodule} is an $R$-linear bifunctor
\begin{align*}
    M \colon \calA^{\opp} \otimes \calB \rightarrow \modd R.
\end{align*}
\end{definition}

\begin{definition}(= Definition \ref{gluingdef})
We define the \textit{gluing of $\calA$ and $\calB$ along the bifunctor $M$} to be the category $\calA\times_M\calB$ defined as follows.
\begin{itemize}
\item \textit{Objects.} An object is a triple $(A,B,\mu)$, where $A\in\calA$, $B\in\calB$, and $\mu\in M(A,B)$.
\item \textit{Morphisms.} A morphism
\begin{align*}
(f,g) \colon (A,B,\mu)\longrightarrow(A',B',\mu')
\end{align*}
is a pair of morphisms $f \colon A\to A'$ in $\calA$ and $g \colon B\to B'$ in $\calB$ such that
\begin{align*}
M(\idd_A,g)(\mu)=M(f,\idd_{B'})(\mu')
\end{align*}
in $M(A,B')$.
\end{itemize}
\end{definition}

Under natural left-exactness assumptions on $M$ as in Assumption \ref{exaxtas}, we prove in Theorem \ref{isabelian} that $\calA\times_M\calB$ is an $R$-linear abelian category. Its derived category admits a natural semiorthogonal decomposition; see \S \ref{derivedsod}.

In \S \ref{torsionpair}, we show that this construction recovers abelian categories with a suitable torsion pair.

\begin{proposition}(= Proposition \ref{torsionpairbimodule}) \label{torsionintro}
Let $\calC$ be an abelian category and let $(\calT,\calF)$ be a torsion pair in $\calC$ such that $\Hom_{\calC}(\calF,\calT)=0$, and set
\begin{align*}
M \colon \calF^\opp\times\calT\longrightarrow \mathbf{Ab},
\qquad
M(F,T)\coloneqq \Ext^1_{\calC}(F,T).
\end{align*}
Then there is an equivalence of abelian categories
\begin{align*}
\calC\simeq\calF\times_M\calT.
\end{align*}
\end{proposition}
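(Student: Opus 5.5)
We construct an explicit functor $\Phi\colon\calC\to\calF\times_M\calT$ and show it is fully faithful and essentially surjective. Since it will turn out to be an equivalence of additive categories, it automatically respects the abelian structure, which is unique. On objects, take $C\in\calC$ with its torsion sequence
\begin{align*}
0\to tC\to C\to fC\to 0, \qquad tC\in\calT,\ fC\in\calF,
\end{align*}
and set $\Phi(C)=(fC,tC,\epsilon_C)$, where $\epsilon_C\in\Ext^1_\calC(fC,tC)$ is the Yoneda class of this extension. Here $\calF$ plays the role of $\calA$ and $\calT$ the role of $\calB$, matching $M\colon\calF^\opp\times\calT\to\mathbf{Ab}$.

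On morphisms, a map $h\colon C\to C'$ induces $t(h)\colon tC\to tC'$ and $f(h)\colon fC\to fC'$ by functoriality of the torsion sequence. The standard compatibility of Yoneda classes with a morphism of short exact sequences gives $t(h)_*\epsilon_C=f(h)^*\epsilon_{C'}$ in $\Ext^1(fC,tC')$, which is exactly the morphism condition of the glued category. Hence $\Phi$ is a well-defined additive functor.

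\emph{Essential surjectivity.} Given $(F,T,\mu)$, realize $\mu$ by an extension $0\to T\to E\to F\to 0$. By uniqueness of the torsion sequence, this is the torsion sequence of $E$, so $\Phi(E)\cong(F,T,\mu)$.

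\emph{Faithfulness.} Suppose $t(h)=0$ and $f(h)=0$. Then $h$ factors as $C\to fC\to C'$, and the map $fC\to C'$ factors through $tC'$ because $f(h)=0$. This gives a morphism $fC\to tC'$, which vanishes since $\Hom(\calF,\calT)=0$. Hence $h=0$.

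\emph{Fullness.} This is the main step. Given $(\phi,\psi)$ with $\phi\colon fC\to fC'$, $\psi\colon tC\to tC'$ and $\psi_*\epsilon_C=\phi^*\epsilon_{C'}$, we need $h\colon C\to C'$ inducing $(\phi,\psi)$. Form the pushout extension $\psi_*C$ and the pullback extension $\phi^*C'$, both lying in $\Ext^1(fC,tC')$. Equality of their Yoneda classes gives an isomorphism of extensions $\psi_*C\cong\phi^*C'$ that is the identity on the ends. Composing
\begin{align*}
C\to\psi_*C\cong\phi^*C'\to C'
\end{align*}
yields $h$, and it restricts to $\psi$ on $tC$ and induces $\phi$ on $fC$, as required. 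This is routine Yoneda-Ext bookkeeping. The only subtlety is that the isomorphism of extensions is not unique; it is determined only up to $\Hom(fC,tC')=0$, so in fact the lift is unique, consistent with faithfulness.

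Finally, I would note that the hypothesis $\Hom(\calF,\calT)=0$ also makes $\Ext^1(-,-)$ left exact in the appropriate sense, as required by the paper's Assumption~\ref{exaxtas}. This is not needed for the equivalence itself, since $\calC$ is already abelian and transports its abelian structure along $\Phi$.
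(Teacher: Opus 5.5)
Your proof is correct and follows the same route as the paper. The paper builds the identical functor $\Phi(E)=(F_E,T_E,\xi_E)$ and then cites Ringel's Appendix A for fullness and essential surjectivity, together with the description of the kernel of $\Phi$ as the ideal generated by maps $\calF\to\calT$, which vanishes here; you instead verify faithfulness, fullness (by the Yoneda pushout/pullback argument) and essential surjectivity directly. The paper also checks that $\Ext^1$ satisfies Assumption~\ref{exaxtas}, which, as you note, is not needed for the equivalence itself, since being abelian transfers along any equivalence of categories.
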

This explains why the above construction is the correct abelian version of gluing by extensions.

We then apply this to glued $t$-structures in the sense of \cite{gluing1}. Let $\calD=\langle \calA,\calB\rangle$ be a semiorthogonal decomposition, and suppose that $\calA$ and $\calB$ are equipped with $t$-structures with hearts $\calA_0$ and $\calB_0$. As mentioned in \cite[Lemma 2.1]{gluing1}, if
\begin{align*}
\Hom_{\calD}^{\leq0}(\calA_0,\calB_0)=0,
\end{align*}
then these $t$-structures glue to a $t$-structure on $\calD$ whose glued heart $\glue$ is the extension closure of $\calA_0$ and $\calB_0$. Thus the glued heart satisfies the hypotheses of Proposition \ref{torsionintro}, and we obtain the following concrete description of the glued heart.

\begin{corollary}(= Corollary \ref{relatinghearts})
The glued heart $\glue$ is equivalent to the category $\calA_0\times_{M_0}\calB_0$ obtained by gluing $\calA_0$ and $\calB_0$ with the bimodule
\begin{align*}
M_0 \colon \calA_0^\opp\times\calB_0\longrightarrow \mathbf{Ab}, \qquad
M_0(A,B)=\Ext^1_{\glue}(A,B).
\end{align*}
\end{corollary}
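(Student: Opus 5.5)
The plan is to deduce the corollary directly from Proposition \ref{torsionpairbimodule}, applied with $\calC=\glue$, $\calT=\calB_0$ and $\calF=\calA_0$. Three things need checking: that $(\calB_0,\calA_0)$ is a torsion pair in $\glue$; that $\Hom_{\glue}(\calA_0,\calB_0)=0$; and that the bimodule $M(F,T)=\Ext^1_{\glue}(F,T)$ agrees with $M_0$.

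\textbf{Vanishing.} This is immediate from the hypothesis $\Hom^{\leq 0}_{\calD}(\calA_0,\calB_0)=0$ in degree $0$. We also note that $\Hom_{\calD}(\calB_0,\calA_0)=0$, because $\calD=\langle\calA,\calB\rangle$ is semiorthogonal and so $\Hom(\calB,\calA)=0$. In fact all degrees vanish.

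\textbf{Torsion pair.} Since $\Hom(\calB_0,\calA_0)=0$, the torsion-pair orthogonality holds with $\calB_0$ as torsion class and $\calA_0$ as torsion-free class. Next take any $E\in\glue$. The semiorthogonal decomposition gives a triangle $B\to E\to A\to B[1]$ with $B\in\calB$ and $A\in\calA$. We must show $B\in\calB_0$ and $A\in\calA_0$. I would use the description of the glued $t$-structure from \cite[Lemma 2.1]{gluing1}: $E\in\calD^{\le 0}$ iff its $\calA$-projection lies in $\calA^{\le0}$ and its $\calB$-component lies in $\calB^{\le0}$, and dually for $\calD^{\ge0}$. Hence $E$ lies in the heart iff $A\in\calA_0$ and $B\in\calB_0$.

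Because $\calA_0$, $\calB_0$ and $E$ all lie in the heart $\glue$, this triangle is a short exact sequence $0\to B\to E\to A\to 0$ in $\glue$. That is exactly the decomposition required of a torsion pair. The main subtlety is the direction convention. The objects of $\calB$ are the subobjects, so $\calB_0=\calT$ and $\calA_0=\calF$. This matches the bimodule variance $\calA_0^{\opp}\times\calB_0$, and we need the hypothesis $\Hom(\calF,\calT)=\Hom(\calA_0,\calB_0)=0$, which we have.

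\textbf{Conclusion.} Proposition \ref{torsionpairbimodule} now yields $\glue\simeq\calA_0\times_{M}\calB_0$ with $M(A,B)=\Ext^1_{\glue}(A,B)=M_0(A,B)$. Yoneda $\Ext^1$ in a heart also agrees with $\Hom_{\calD}(A,B[1])$, though that is not needed for the statement.

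I expect the only real work to be the middle step, identifying the semiorthogonal components of an object of the glued heart as objects of $\calA_0$ and $\calB_0$. This follows from the explicit definition of the glued aisles.
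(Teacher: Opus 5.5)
Your proposal is correct and follows the paper's proof. Both apply Proposition \ref{torsionpairbimodule} to the torsion pair $(\calB_0,\calA_0)$ in $\glue$, using the degree-zero part of $\Hom^{\leq 0}_{\calD}(\calA_0,\calB_0)=0$ for the required vanishing $\Hom(\calF,\calT)=0$. The only difference is that the paper takes the torsion pair directly from Proposition \ref{gluedheart}, whereas you re-derive it from the semiorthogonal triangle and the description of the glued heart.
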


The gluing bimodule is also related to the usual gluing bifunctor of semiorthogonal decompositions, and this is studied further in \S \ref{bimoduledescriptions}.

We also compare this with the dg-gluing construction of \cite{Kuznetsov_2014}. If $\calA$ and $\calB$ are pre-triangulated $R$-linear dg-categories and $M$ is an $\calA$--$\calB$ dg-bimodule, then the glued dg-category $\calA\times_M\calB$ admits a semiorthogonal decomposition
\begin{align*}
[\calA\times_M\calB]=\langle[\calA],[\calB]\rangle.
\end{align*}
In particular, the abelian gluing and the dg-gluing are related by the following result.

\begin{corollary}(= Corollary \ref{heartshearts})
Let $\calA\times_M\calB$ be a glued dg-category such that the homotopy categories $[\calA]$ and $[\calB]$ are equipped with $t$-structures with hearts $\calA_0$ and $\calB_0$, respectively. Then there is an induced abelian bimodule
\begin{align*}
M_0 \colon \calA_0^\opp\times\calB_0&\longrightarrow \modd R, \\
(A,B)&\longmapsto H^0(M(A,B)).
\end{align*}
If the $t$-structures glue with heart $\glue$, then
\begin{align*}
\glue\simeq\calA_0\times_{M_0}\calB_0.
\end{align*}
\end{corollary}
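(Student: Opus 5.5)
The plan is to reduce to Corollary~\ref{relatinghearts}. That corollary already identifies $\glue$ with $\calA_0\times_{M'}\calB_0$, where $M'(A,B)=\Ext^1_{\glue}(A,B)$. It therefore suffices to construct a natural isomorphism of bimodules $M'\cong M_0$ on $\calA_0^\opp\times\calB_0$, with $M_0(A,B)=H^0(M(A,B))$. This is enough because the category $\calA\times_M\calB$ of Definition~\ref{gluingdef} clearly depends on $M$ only up to natural isomorphism of bifunctors. An isomorphism $\phi\colon M'\Rightarrow M_0$ induces the functor $(A,B,\mu)\mapsto(A,B,\phi_{A,B}(\mu))$, which is identity on morphisms, and naturality of $\phi$ shows that the compatibility condition on morphism pairs $(f,g)$ is preserved.

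\textbf{Step 1.} Compute $\Ext^1$ inside the dg-gluing. Since $\glue$ is the heart of a $t$-structure on $[\calA\times_M\calB]$, we have $\Ext^1_{\glue}(A,B)\cong\Hom_{[\calA\times_M\calB]}(A,B[1])$ for $A,B$ in the heart. Recall the morphism complexes in Kuznetsov--Lunts gluing: for objects $A\in\calA$ and $B\in\calB$ embedded as $(A,0,0)$ and $(0,B,0)$, the morphism complex from the $\calA$-part to the $\calB$-part is $M(A,B)$ itself, possibly up to a shift. I will match conventions so that
\[
\Hom_{[\calA\times_M\calB]}(A,B[i])\cong H^i(M(A,B)).
\]
This is a convention check against the paper's definition. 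The Ext-degree must shift to $H^0$ precisely because the statement writes $H^0$, so in the paper's convention the gluing complex should compute $\Hom(A,B[1])$ as $H^0(M(A,B))$. In other words, $M$ effectively sits as a cone-type shift in the gluing, with a twisted differential of degree one.

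\textbf{Step 2.} Check naturality. The identification in Step 1 comes from functorial chain-level identifications of Hom complexes in the glued dg-category, composed with the inclusions of $\calA$ and $\calB$. Composition with morphisms of $\calA_0$ and $\calB_0$ therefore corresponds to the bimodule action of $M$ on cohomology. Hence the isomorphism is natural in both variables.

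The main obstacle is Step 1: pinning down the degree and sign conventions of the dg-gluing, so that the correct cohomology group $H^0$ (rather than $H^1$) appears, and checking that the Yoneda $\Ext^1$ in the heart agrees with the triangulated $\Hom(-,-[1])$. The second point is standard for hearts of $t$-structures. Note also that $M_0$ is left exact in the sense of Assumption~\ref{exaxtas}. This holds automatically through the identification with $\Ext^1_{\glue}$, using $\Hom^{\le0}(\calA_0,\calB_0)=0$, so the glued category is abelian as required.
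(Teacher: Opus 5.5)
Your strategy matches the paper's. You identify $H^0(M(A,B))$ with $\Hom_{[\calA\times_M\calB]}(A,B[1])\cong\Ext^1_{\glue}(A,B)$ naturally in $A$ and $B$. You then conclude via Corollary~\ref{relatinghearts} (that is, Proposition~\ref{torsionpairbimodule}) and invariance of the glued category under isomorphism of bimodules. That invariance is Proposition~\ref{isobimod}, which you reprove directly.

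The one step you have not established is Step~1, and it is the only place where the specific form of $M_0$ enters. The displayed formula $\Hom_{[\calA\times_M\calB]}(A,B[i])\cong H^i(M(A,B))$ is off by one for the paper's conventions. It would give $\Ext^1_{\glue}\cong H^1(M)$, and it contradicts your next sentence. Choosing the degree ``because the statement writes $H^0$'' is circular. Nothing needs to be guessed; it can be read off the definition of the dg-gluing:
\[
\Hom^\bullet\bigl((A,0,0),(0,B,0)\bigr)=\Hom^\bullet_{\calA}(A,0)\oplus\Hom^\bullet_{\calB}(0,B)\oplus M^{\bullet-1}(A,B)=M^{\bullet-1}(A,B).
\]
The differential here is $-d_M$, because the terms involving $\phi_1=\phi_2=0$ vanish. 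Hence $\Hom_{[\calA\times_M\calB]}(A,B[i])\cong H^{i-1}(M(A,B))$, and in particular $\Hom(A,B[1])\cong H^0(M(A,B))$. This is exactly the paper's computation.

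Your Step~2 can also be made explicit using the composition rule. Take $f\colon A'\to A$ in $\calA_0$, $g\colon B\to B'$ in $\calB_0$ (degree $0$), and $c\in M^{\bullet-1}(A,B)$. Then $(0,0,c)\circ(f,0,0)=(0,0,c\circ f)$ and $(0,g,0)\circ(0,0,c)=(0,0,g\circ c)$. On $H^0$ these are precisely the bimodule actions $M(f,\idd_B)$ and $M(\idd_A,g)$. With Step~1 corrected, the rest of your argument goes through as in the paper, including transporting left exactness of $M_0$ from $\Ext^1_{\glue}$.
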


In this sense, the abelian construction gives the abelian level of the dg-gluing construction of \cite{Kuznetsov_2014}, with the bimodule induced by the dg-gluing data.

\subsection{Support property for glued stability conditions}

In the following, a \textit{stability condition} is always assumed to satisfy the support property.
The proposition below is similar to \cite[Theorem 3.6]{gluing1}, however with the \textit{reasonable} condition replaced by the (stronger) support property.

\begin{proposition}(= Proposition \ref{gluingg}) \label{testtt}
    Let $\sigma_\calA=(Z_{\calA}, \calP_{\calA})$ and $\sigma_{\calB}=(Z_{\calB}, \calP_{\calB})$ be stability conditions on the components of the semiorthogonal decomposition $\calD=\langle \calA, \calB \rangle$. If the following conditions are satisfied
    \begin{enumerate}[ leftmargin=0pt, labelsep=0.5em, itemindent=!, align=left ]
        \item [\hypertarget{ass:CP11}{\textup{(CP1)}}] $\Hom^{\le 0}_{\calD}(\calP_{\calA}(0,1], \calP_{\calB}(0,1] )=0;$ 
        \item [\hypertarget{ass:CP22}{\textup{(CP2)}}] $\Hom^{\le 0}_{\calD}(\calP_{\calA}(\alpha,\alpha+1], \calP_{\calB}(\alpha,\alpha+1] )=0$ for some $\alpha \in (0,1)$;
    \end{enumerate}
    then $\sigma_\calA,\sigma_{\calB}$ glue to a stability condition on $\calD$. 
\end{proposition}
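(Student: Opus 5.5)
The plan is to build the glued stability condition from its heart and central charge, and then verify the Harder--Narasimhan property and the support property separately. The heart and central charge are handled directly by the abelian gluing results.

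\textbf{Heart and central charge.} By \hyperlink{ass:CP11}{(CP1)} and \cite[Lemma 2.1]{gluing1}, the hearts $\calA_0=\calP_\calA(0,1]$ and $\calB_0=\calP_\calB(0,1]$ glue to a heart $\glue$ of a bounded $t$-structure on $\calD$. Its Grothendieck group is $K_0(\calD)=K_0(\calA)\oplus K_0(\calB)$. Set $Z=Z_\calA\oplus Z_\calB$. Since every object of $\glue$ is an extension of some $A\in\calA_0$ by some $B\in\calB_0$, the value $Z(E)=Z_\calA(A)+Z_\calB(B)$ lies in the closed upper half plane minus $\R_{\ge0}$. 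So $Z$ is a stability function on $\glue$.

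\textbf{Harder--Narasimhan property.} I would follow \cite[Theorem 3.6]{gluing1}. I expect to rely on its argument rather than reprove it: (CP2) produces a second glued heart, the $\alpha$-tilted one, and the two hearts together control descending chains in $\glue$. This yields HN filtrations, and hence a slicing $\calP$. I would also record the description of $\calP$-semistable objects supplied by Corollary \ref{relatinghearts}. A semistable $E\in\glue$ is a triple $(A,B,\mu)$ with $A\in\calA_0$, $B\in\calB_0$, $\mu\in\Ext^1(A,B)$. The $\calA$-part $A$ is a quotient of $E$ and the $\calB$-part $B$ is a subobject of $E$.

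\textbf{Support property (main obstacle).} I would take as quadratic form $Q=Q_\calA\oplus Q_\calB$, after rescaling the given forms. I would need $Q_\calA,Q_\calB$ negative definite on $\ker Z_\calA,\ker Z_\calB$ and with $Q_\calA,Q_\calB\ge 0$ on the respective semistable classes. Then $Q$ is negative definite on $\ker Z_\calA\oplus\ker Z_\calB$. That is not all of $\ker Z$, so this step has to be handled with care.

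Instead I would aim for the equivalent norm form of the support property: there is $C>0$ with $\|v(E)\|\le C|Z(E)|$ for every glued semistable $E$. Write $E=(A,B,\mu)$ and take the HN filtrations of $A$ in $\sigma_\calA$ and of $B$ in $\sigma_\calB$. Suppose there is a phase gap: all HN factors of $A$ and $B$ have phases in an interval of length strictly less than $1$, uniformly. This is where (CP2) enters. If $E$ is semistable of phase $\phi$, then $\phi\in(0,1]$ and $E$ also lies in the $\alpha$-glued heart up to shift. From this I would deduce that the phases of the HN factors of $A$ and $B$ lie in a window $(\phi-1+\epsilon,\phi+\epsilon']$ of length $<1$, with the gap depending only on $\alpha$.

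Given such a window, all the vectors $Z(\text{factor})$ lie in a sector of angle $<\pi$. Hence
\[
\sum|Z(\text{factors})|\le c\,\Bigl|\sum Z(\text{factors})\Bigr| = c\,|Z(E)|,
\]
with $c$ depending only on the angle. Applying the support property of $\sigma_\calA$ and $\sigma_\calB$ to each factor and using the triangle inequality for norms then gives $\|v(E)\|\le C|Z(E)|$.

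The hard part is proving this uniform phase window from (CP1) and (CP2) together. The idea is that if $A$ had a factor of phase at most $\alpha$, then combining the subobject $B$ with the quotient $A$ of phase below $\alpha$ in the $\alpha$-glued heart would destabilize $E$. I would first try this argument phase by phase, comparing the HN filtrations of $E$ with respect to the two glued hearts.
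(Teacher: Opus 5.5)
Your architecture matches the paper's: glue the hearts by (CP1), import the HN property from \cite[Theorem 3.6]{gluing1}, and prove the support property in norm form by putting all HN factors of $A$ and $B$ in a sector of angle $<\pi$, then applying Lemma \ref{cone} and the component support properties. The gap is the step you call ``the hard part''. You never establish the uniform phase window, and the mechanism you sketch cannot establish it. Test semistability of $E=(A,B,\mu)$ against the subobject $(0,B_1,0)$, where $B_1$ is the first HN piece of $B$, and against the quotient $(A/A_{r-1},0,0)$, the last HN factor of $A$. This only gives $\phi^+_{\calB}(B)\le\phi(E)\le\phi^-_{\calA}(A)$. That yields half of the needed dichotomy: if $\phi(E)>\alpha$ then $A\in\calP_{\calA}(\alpha,1]$, and if $\phi(E)\le\alpha$ then $B\in\calP_{\calB}(0,\alpha]$. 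It uses no (CP2) at all. The other half says that $B$ has no HN factor of phase $\le\alpha$ when $\phi(E)>\alpha$, and $A$ has none of phase $>\alpha$ when $\phi(E)\le\alpha$. Test objects of the form $(0,B',0)$ or $(A'',0,0)$ cannot see this half. The low-phase part of $B$ is a quotient of $B$, not of $E$, and the high-phase part of $A$ is a subobject of $A$, not of $E$. This is exactly the half that needs (CP2). Compare Theorem \ref{sectorbound}, where, absent (CP2), the bound on $\phi^+_{\calA}(A)-\phi^-_{\calB}(B)$ has to be imposed as hypothesis (S).

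The missing idea is the torsion pair $(\calT_\alpha,\calF_\alpha)$ in $\glue$ from \cite[Equation (3.4)]{gluing1}. Here $\calT_\alpha$ consists of the triples whose components lie in $\calP_{\calA}(\alpha,1]$ and $\calP_{\calB}(\alpha,1]$, and $\calF_\alpha$ of those with components in $\calP_{\calA}(0,\alpha]$ and $\calP_{\calB}(0,\alpha]$. The existence of this torsion pair is where (CP2) enters, since it gives $\Hom_{\calD}(\calP_{\calA}(\alpha,1],\calP_{\calB}(0,\alpha][1])=0$. Let $A_{>\alpha}\subset A$ and $B_{>\alpha}\subset B$ be the HN truncations at $\alpha$. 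The restriction of $\mu$ to $A_{>\alpha}$ then maps to zero in $\Hom(A_{>\alpha},B_{\le\alpha}[1])$, so it lifts to some $\mu'\in\Hom(A_{>\alpha},B_{>\alpha}[1])$. Thus $T_E=(A_{>\alpha},B_{>\alpha},\mu')$ is a subobject of $E$ lying in $\calT_\alpha$, with quotient $F_E\in\calF_\alpha$. Nonzero objects of $\calT_\alpha$ have $Z$-phase in $(\alpha,1]$, and those of $\calF_\alpha$ have $Z$-phase in $(0,\alpha]$. Hence $\phi(T_E)\le\phi(E)\le\phi(F_E)$ forces $T_E=0$ or $F_E=0$. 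So either all HN factors of $A$ and $B$ lie in $(\alpha,1]$, or all lie in $(0,\alpha]$, and the sector estimate holds with $c_\alpha=\min\{\cos(\pi\alpha/2),\sin(\pi\alpha/2)\}$. Your remark that $E$ lies in the $\alpha$-glued heart up to shift could also close the gap, via $t$-exactness of $i_{\calA}^*$ and $i_{\calB}^!$. That route first requires justifying that the glued slicing satisfies $\calP(\alpha,\alpha+1]=\gl(\calP_{\calA}(\alpha,\alpha+1],\calP_{\calB}(\alpha,\alpha+1])$, and you did not draw that conclusion.
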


As already mentioned in \cite{gluing1}, it may be hard to check whether condition \hyperlink{ass:CP22}{\textup{(CP2)}} holds in a given semiorthogonal decomposition. We provide an alternative description of the conditions of Proposition \ref{testtt} where the purpose of condition \hyperlink{ass:CP22}{\textup{(CP2)}} is isolated.

\begin{proposition}(= Proposition \ref{popp}) \label{poppintro}
Let $\calD=\langle\calA,\calB\rangle$ be a triangulated category with a semiorthogonal decomposition, and let $
\sigma_{\calA}=(\calA_0,Z_{\calA}), \sigma_{\calB}=(\calB_0,Z_{\calB})$
be stability conditions on the components, so that \hyperlink{ass:CP1}{\textup{(CP1)}} holds. Then
\hyperlink{ass:CP2}{\textup{(CP2)}} is equivalent to the following condition:
\begin{enumerate}[ leftmargin=0pt, labelsep=0.5em, itemindent=!, align=left ]
 \item [\hypertarget{ass:CP33}{\textup{(CP3)}}]
There exists $\alpha\in(0,1)$ such that, for every nonzero
$\sigma_{\calA}$-semistable object $A\in\calA_0$ and every nonzero
$\sigma_{\calB}$-semistable object $B\in\calB_0$, we have
\begin{align*}
\phi_{\calB}(B)\leq\alpha<\phi_{\calA}(A) \quad\Longrightarrow\quad
M_0(A,B)=\Ext^1_{\glue}(A,B)=0.
\end{align*}
\end{enumerate}
\end{proposition}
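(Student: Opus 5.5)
We prove the equivalence by reducing (CP2) to a statement about semistable objects and then translating $\Hom^{\le 0}$ into the bimodule $M_0$.

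\textbf{Reduction to semistable objects.} Both $\calP_\calA(\alpha,\alpha+1]$ and $\calP_\calB(\alpha,\alpha+1]$ are extension closures of semistable objects with phases in $(\alpha,\alpha+1]$. The vanishing of $\Hom^{\le 0}_\calD(-,-)$ is stable under extensions in either variable, via the long exact sequences of $\Hom$. Hence (CP2) for a given $\alpha$ is equivalent to
\[
\Hom^{k}_\calD(A',B')=0 \quad \text{for all } k\le 0
\]
for all semistable $A'\in\calP_\calA(\phi)$ and $B'\in\calP_\calB(\psi)$ with $\phi,\psi\in(\alpha,\alpha+1]$.

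\textbf{Splitting into cases.} Write each such object as a shift of an object of the heart: $A'=A[a]$ with $A\in\calA_0$ semistable, $a\in\{0,1\}$, and $\phi_\calA(A)=\phi-a\in(0,1]$; similarly $B'=B[b]$. Then $\Hom^k_\calD(A',B')=\Hom^{k+b-a}_\calD(A,B)$. There are three cases.

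\begin{itemize}
\item $a=b$. Both objects lie in $(0,1]$ up to the same shift, so vanishing for $k+b-a\le 0$ is exactly (CP1).
\item $a=0$, $b=1$. Here $\phi_\calA(A)>\alpha$ and $\phi_\calB(B)\le\alpha$, and we need $\Hom^{j}(A,B)=0$ for all $j\le 1$. The cases $j\le 0$ are covered by (CP1). The case $j=1$ is $\Hom^1_\calD(A,B)=\Ext^1_{\glue}(A,B)=M_0(A,B)$. This identification holds because $\glue$ is the heart of a $t$-structure on $\calD$ containing $\calA_0$ and $\calB_0$, so $\Ext^1$ in the heart agrees with $\Hom^1_\calD$; this is the content of Corollary \ref{relatinghearts}.
\item $a=1$, $b=0$. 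We need $\Hom^{j}(A,B)=0$ for $j\le -1$, which follows from (CP1).
\end{itemize}

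\textbf{Conclusion.} Given (CP1), the only new condition imposed by (CP2) for a fixed $\alpha$ is
\[
M_0(A,B)=0 \quad \text{whenever} \quad \phi_\calB(B)\le\alpha<\phi_\calA(A).
\]
This is exactly (CP3) for the same $\alpha$, which gives both directions of the equivalence simultaneously.

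The main point needing care is the first reduction. We must check that $\calP(\alpha,\alpha+1]$ is generated under extensions by the shifted semistable objects of the heart, which follows from Harder–Narasimhan filtrations. We must also check that every nonzero semistable object of $\calA_0$ with phase in $(\alpha,1]$ arises with $a=0$, and every one with phase in $(0,\alpha]$ arises with $a=1$. Both directions of the equivalence rely on this bookkeeping. The rest is the routine identification $\Hom^1_\calD=\Ext^1_{\glue}$.
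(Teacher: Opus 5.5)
Your proposal is correct and follows essentially the same route as the paper. Both reduce (CP2) to semistable objects, write them as shifts $A[a]$, $B[b]$ of semistable objects in the hearts, and dispose of every case by (CP1) except one: $a=0$, $b=1$ in degree $0$, where $\Hom_{\calD}(A,B[1])=M_0(A,B)$ is exactly the (CP3) condition. The only differences are cosmetic: you merge the paper's two equal-shift cases, and you get both implications at once for each fixed $\alpha$, whereas the paper proves the forward implication separately.
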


For every semistable object $E=(A,B,\mu)$, condition \hyperlink{ass:CP22}{\textup{(CP2)}} forces a tight control on the HN factors of $A$ and $B$ and as a result, the glued stability conditions constructed by Proposition \ref{testtt} form a small gluing region. 

For instance, this is already exhibited in the category of holomorphic triples ~$\mathcal{T}_C=D^b(\operatorname{TCoh}(C))$. In \cite{holomorphic}, glued pre-stability conditions are constructed which do not satisfy \hyperlink{ass:CP22}{\textup{(CP2)}} for any real number $\alpha\in(0,1)$. 

This leads to a more general criterion for the support property, provided that the glued pair already defines a pre-stability condition.

\begin{theorem}(= Theorem \ref{sectorbound})
Let $\calD=\langle\calA,\calB\rangle$ be a triangulated category with a semiorthogonal decomposition, and let $\sigma_{\calA}=(\calA_0,Z_{\calA})$ and $\sigma_{\calB}=(\calB_0,Z_{\calB})$ be stability conditions on the components such that the $t$-structures glue with heart $\glue=\calA_0\times_{M_0}\calB_0$. 

Assume that $\sigma_{\gl}=(\glue,Z_{\calA}+Z_{\calB})$ is a pre-stability condition and that there exists a real number $ \epsilon \in (0,1)$ such that the following condition is satisfied:
\begin{enumerate}[ leftmargin=0pt, labelsep=0.5em, itemindent=!, align=left ]
 \item[\hypertarget{ass:SS}{\textup{(S)}}] For every $\sigma_{\gl}$-semistable object $E=(A,B,\mu)\in\glue$ with $A\neq0$, $B\neq0$, and $\mu\neq0$, we have
\begin{align*}
    \phi_{\calA}^{+}(A)-\phi_{\calB}^{-}(B)\leq1-\epsilon.
\end{align*}
\end{enumerate}
Then $\sigma_{\gl}$ is a stability condition on $\calD$.
\end{theorem}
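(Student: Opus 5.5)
The plan is to verify the support property for $\sigma_{\gl}$ with a quadratic form, or equivalently through a norm inequality. Fix norms $\|\cdot\|$ on $\Lambda_\calA\otimes\R$ and $\Lambda_\calB\otimes\R$, and take the lattice of $\calD$ to be $\Lambda=\Lambda_\calA\oplus\Lambda_\calB$, so that $Z_{\gl}=Z_\calA+Z_\calB$ factors through $\Lambda$. By hypothesis there are constants $C_\calA,C_\calB$ with $\|v(A')\|\le C_\calA|Z_\calA(A')|$ for every $\sigma_\calA$-semistable $A'$, and likewise for $\calB$. I will show that there is a constant $C$ with
\begin{align*}
\|v(A)\|+\|v(B)\|\le C\,|Z_\calA(A)+Z_\calB(B)|
\end{align*}
for every $\sigma_{\gl}$-semistable object $E$. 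By the concrete description of the glued heart (Corollary~\ref{relatinghearts}) we may write $E=(A,B,\mu)\in\calA_0\times_{M_0}\calB_0$, and it suffices to treat $E$ in the heart, since shifts change neither the norm nor $|Z|$.

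First I dispose of the degenerate cases. If $A=0$ or $B=0$, then $E$ lies in $\calB_0$ or $\calA_0$. Subobjects in the glued category of an object of the form $(0,B,0)$ are exactly of the form $(0,B',0)$, so $E$ is semistable in its component and the component's support property applies directly. If $\mu=0$, then $E=(A,0,0)\oplus(0,B,0)$, and both summands are semistable of the same phase; the inequality then follows by adding the component inequalities, because $Z$ of each summand lies on a common ray and so $|Z(E)|=|Z(A)|+|Z(B)|$.

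In the main case $A,B,\mu\neq 0$, I take HN filtrations of $A$ in $\sigma_\calA$ and of $B$ in $\sigma_\calB$. All HN factors have phases in $(0,1]$. Condition (S) gives $\phi_\calA^+(A)-\phi_\calB^-(B)\le 1-\epsilon$, so every HN factor of $A$ has phase at most $\phi^-_\calB(B)+1-\epsilon$. The factors of $B$ have phase at least $\phi^-_\calB(B)$, and they are at most $1$. I also want to control $\phi^-_\calA(A)$ and $\phi^+_\calB(B)$ relative to $\phi:=\phi_{\gl}(E)$. The sequence $0\to(0,B,0)\to E\to(A,0,0)\to 0$ is exact in the glued category, so semistability gives $\phi_\calB^+(B)\le\phi$, using the subobject $(0,B_1,0)$ built from the first HN factor. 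Dually, the quotient $E\twoheadrightarrow(A,0,0)\twoheadrightarrow(A_{\min},0,0)$ gives $\phi_\calA^-(A)\ge\phi$. Hence all HN phases of both $A$ and $B$ lie in the interval $[\phi^-_\calB(B),\,\phi^-_\calB(B)+1-\epsilon]$, whose length is at most $1-\epsilon$.

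A collection of complex numbers whose arguments lie in a sector of angle $\pi(1-\epsilon)<\pi$ satisfies $\sum|z_i|\le K_\epsilon|\sum z_i|$, with $K_\epsilon=1/\sin(\pi\epsilon/2)$. I apply this to the central charges of all HN factors of $A$ and $B$. Combining with the component support property and the triangle inequality for the norms gives
\begin{align*}
\|v(A)\|+\|v(B)\|\le\sum\|v(\text{factors})\|\le\max(C_\calA,C_\calB)\sum|Z(\text{factors})|\le\max(C_\calA,C_\calB)K_\epsilon|Z(E)|,
\end{align*}
which is the desired inequality. The main obstacle is the phase bookkeeping in this case: I must check carefully that $(0,B_1,0)$ is a subobject and that $(A_{\min},0,0)$ is a quotient in $\calA_0\times_{M_0}\calB_0$. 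This uses the explicit morphism description together with the fact that $M_0(-,B)$ and $M_0(A,-)$ are functorial, so the compatibility condition on $\mu$ is automatic. Once this is in place, the sector estimate is routine, and the uniform constant gives the support property, so $\sigma_{\gl}$ is a stability condition.
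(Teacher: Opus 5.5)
Your proposal is correct and follows essentially the same route as the paper. It uses the same lattice $\Lambda_{\calA}\oplus\Lambda_{\calB}$, the same four cases ($A=0$, $B=0$, $\mu=0$, and $A,B,\mu\neq0$), the same subobject $(0,B_1,0)$ and quotient $(A_{\min},0,0)$ to trap all HN phases in an interval of length at most $1-\epsilon$, and the same sector estimate with constant $\max\{C_{\calA},C_{\calB}\}/\sin(\pi\epsilon/2)$. For the obstacle you flag, the compatibility condition on those two morphisms holds trivially because $M_0(0,B)=M_0(A,0)=0$; they are mono and epi because kernels and cokernels are computed componentwise.
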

In subsequent work, we will apply the above theorem to construct new stability conditions on gluings of derived categories of smooth and proper curves. 

While the assumptions of Proposition \ref{poppintro} seem restrictive, they apply to a certain range of semiorthogonal decompositions. Motivated by augmented curves as in \cite{augmentations}, the main examples come from dg-comma categories, which can be written as explicit dg-gluings in the sense of \cite{Kuznetsov_2014}.

\subsection{Examples and applications}
\begin{setup}[dg-comma categories]\label{dgcomma}
Let $k$ be a field and $\calD$ a proper pre-triangulated $k$-linear dg-category with an object $E_0$. We fix a finite-dimensional associative $k$-algebra $A$, viewed as a dg-algebra concentrated in degree $0$, together with a dg-algebra morphism
\begin{align*}
A\longrightarrow\End_{\calD}(E_0).
\end{align*}
Then the diagram
\begin{align*}
\calD\xrightarrow{\Hom_{\calD}(E_0,-)}\bounded(\modd A)\xleftarrow{\idd}\bounded(\modd A)
\end{align*}
induces the corresponding dg-comma category
\begin{align*}
\calD_{E_0}^A\coloneqq(\idd\downarrow\Hom_{\calD}(E_0,-)).
\end{align*}
Equivalently, $\calD_{E_0}^A$ is the dg-gluing of $\calD$ and $\bounded(\modd A)$ with bimodule
\begin{align*}
M \colon \bounded(\modd A)^\opp\otimes\calD&\longrightarrow C(k), \\
(V,B)&\longmapsto\Hom_{\bounded(\modd A)}(V,\Hom_{\calD}(E_0,B)).
\end{align*}
Its homotopy category admits a natural semiorthogonal decomposition
\begin{align*}
[\calD_{E_0}^A]=\langle\bounded(\modd A),[\calD]\rangle
\end{align*}
as seen in \cite[Corollary 4.5]{Kuznetsov_2014}. Then Proposition \ref{testtt} gives a  way to construct stability conditions on $[\calD_{E_0}^A]$ from stability conditions on the two components. 

In particular, if $[\calD]$ admits a stability condition $\sigma_{\calB}=(\calB,Z_{\calB})$ with $E_0$ nonzero in the aisle
$[\calD]^{\leq0}$ of the induced $t$-structure, then Proposition~\ref{augmentedgluing} gives a support property criterion in this setting, as a special case of Proposition \ref{poppintro}.
\end{setup}

\begin{proposition}\label{augmentedgluing} (= Proposition \ref{augmented gluing})
Let $\sigma_A=(\calA,Z_{\calA})$ be a stability condition on $\bounded(\modd A)$ such that $Z_{\calA}(\modd A)\subset\R_{<0}$. Then the stability conditions $\sigma_A$ and $\sigma_{\calB}$ on the components of the semiorthogonal decomposition
\begin{equation} \label{sodintro}
[\calD_{E_0}^A]=\langle\bounded(\modd A),[\calD]\rangle
\end{equation}
glue to a stability condition on $[\calD_{E_0}^A]$.
\end{proposition}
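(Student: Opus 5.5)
We will verify the hypotheses of Proposition \ref{testtt}, namely \hyperlink{ass:CP11}{\textup{(CP1)}} and \hyperlink{ass:CP22}{\textup{(CP2)}}, for the semiorthogonal decomposition \eqref{sodintro} with $\calA=\bounded(\modd A)$ and $\calB=[\calD]$. By Proposition \ref{poppintro}, once \hyperlink{ass:CP11}{\textup{(CP1)}} is known, \hyperlink{ass:CP22}{\textup{(CP2)}} may be replaced by \hyperlink{ass:CP33}{\textup{(CP3)}}. So the plan has three steps: identify the heart $\calA_0$ of $\sigma_A$, prove \hyperlink{ass:CP11}{\textup{(CP1)}}, and prove \hyperlink{ass:CP33}{\textup{(CP3)}}.

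\textbf{Identifying $\calA_0$.} The hypothesis $Z_{\calA}(\modd A)\subset\R_{<0}$ is the key input. Every nonzero $A$-module has central charge on the negative real axis, which suggests that every module is semistable of phase $1$. To make this precise, use the finite length of $\modd A$. For a simple module $S$, take its HN factors: their central charges lie in the upper half-plane of their phases, and their sum is negative real. We want to deduce that $S\in\calP_{\calA}(1)$. The clean route is to note that $\calA_0$ and $\modd A$ are both hearts of bounded $t$-structures. Since every object of $\modd A$ has phase-$1$ central charge, one shows $\modd A\subset\calP_{\calA}(1)$, for instance by tilting arguments or by arguing via simples with $\Hom$ vanishing. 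It follows that $\calA_0=\modd A$ and $\calP_{\calA}(1)=\modd A$, with all phases of nonzero objects of $\calA_0$ equal to $1$. I expect this identification to be the main obstacle. If the direct argument fails, I would fall back on the standard fact that a heart containing, up to shift, a finite-length heart with all classes on a ray must coincide with it.

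\textbf{Checking \hyperlink{ass:CP11}{\textup{(CP1)}}.} In the dg-gluing, morphisms from $\calA$ to $\calB$ are computed by the bimodule. For $V\in\calA$ and $B\in\calB$,
\begin{align*}
\Hom^{i}_{\calD}(V,B)\cong H^{i}\big(\Hom_{\bounded(\modd A)}(V,\Hom_{\calD}(E_0,B))\big)[\pm 1],
\end{align*}
where the shift convention must be fixed from \cite{Kuznetsov_2014}. The expected form is $\Hom_{[\calD_{E_0}^A]}(V,B)=\Hom(V,\Hom(E_0,B)[-1])$ or similar. For $V\in\modd A$ and $B\in\calB_0$, we have $E_0\in[\calD]^{\le0}$, so $\Hom_{\calD}(E_0,B)$ is concentrated in degrees $\ge0$. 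Hence $\Hom(V,\Hom(E_0,B))$ also lives in degrees $\ge0$ because $V$ is a module. After the gluing shift, this vanishes in degrees $\le0$, which gives \hyperlink{ass:CP11}{\textup{(CP1)}}. The claim $M_0(V,B)=\Hom_{A}(V,H^0\Hom(E_0,B))$ from Corollary \ref{heartshearts} is consistent with this, since it places the first possibly nonzero group in $\Ext^1$.

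\textbf{Checking \hyperlink{ass:CP33}{\textup{(CP3)}}.} Here the argument is immediate. Every nonzero semistable $A$-side object in $\calA_0$ has phase $1$. Choose any $\alpha\in(0,1)$ and any semistable $B\in\calB_0$ with $\phi_{\calB}(B)\le\alpha$. The condition $\alpha<\phi_{\calA}(V)=1$ holds automatically, so we must show $M_0(V,B)=0$. This is not automatic, so the choice of $\alpha$ must be made with care. If vanishing fails in general, the better route is to check \hyperlink{ass:CP22}{\textup{(CP2)}} directly. Take $\alpha$ close to $1$. Then $\calP_{\calA}(\alpha,\alpha+1]=\modd A$ together with nothing of phase in $(1,\alpha+1]$ apart from shifts, which is just $\calP_{\calA}(1)$. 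Also $\calP_{\calB}(\alpha,\alpha+1]\subset\calB_0\cup\calB_0[1]$. The required vanishing $\Hom^{\le0}(\modd A,\calB_0[1]\cap\calP_{\calB}(1,\alpha+1])$ then reduces, via the bimodule formula, to $\Hom_A(V,H^{0}\Hom(E_0,B'))=0$ for $B'\in\calP_{\calB}(0,\alpha]$. Whether this holds I would check against the precise statement in the paper's body. I expect the intended argument to use the phase-$1$ concentration of $\calA_0$ so that the interval $(\alpha,\alpha+1]$ meets $\calA$ only in $\modd A$, combined with degree considerations coming from $E_0\in[\calD]^{\le0}$.
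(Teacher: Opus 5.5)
\textbf{There is a genuine gap: the central vanishing in (CP3) is left unproved, and the choice of $\alpha$ you suggest would make it false.}

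Your overall plan matches the paper's. The $t$-structures glue because $F=[\Hom_{\calD}(E_0,-)]$ is left $t$-exact, and your degree count for (CP1) is exactly this: with the convention $\Hom^\bullet((V,0,0),(0,B,0))=M^{\bullet-1}(V,B)$ one gets $\Hom^k(V,B)=\Hom(V,F(B)[k-1])$, which vanishes for $k\le 0$ since $F(B)\in\bounded(\modd A)^{\ge 0}$. One then verifies (CP3) and applies Propositions~\ref{popp} and~\ref{gluingg}.

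Both your (CP3) attempt and your direct (CP2) check correctly reduce to showing
$M_0(V,B)=\Hom_A(V,\Hom_{[\calD]}(E_0,B))=0$ for semistable $B\in\calB_0$ with $\phi_{\calB}(B)\le\alpha$. You then leave this open. The missing idea is to choose $\alpha$ in terms of $E_0$:
\begin{itemize}
\item Since $0\neq E_0\in[\calD]^{\le 0}=\calP_{\calB}(0,\infty)$, we have $\phi^-_{\calB}(E_0)>0$.
\item Take $\alpha\in(0,\min\{1,\phi^-_{\calB}(E_0)\})$. Then every HN factor of $E_0$ has phase strictly larger than $\phi_{\calB}(B)$.
\item The Hom-vanishing axiom of the slicing gives $\Hom_{[\calD]}(E_0,B)=0$, hence $M_0(V,B)=0$.
\end{itemize}
Your suggestion to take $\alpha$ close to $1$ goes the wrong way. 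Take $E_0=\calO_C$ on a curve, with heart $\Coh(C)$ and $Z=-\deg+i\,\mathrm{rk}$. Line bundles $L$ of large degree are stable of phase in $(\tfrac12,1)$ and satisfy $\Hom(\calO_C,L)\neq 0$. So $M_0(k,L)\neq 0$, and (CP3) fails for every $\alpha\ge\phi(L)$.

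Your first step is also misdirected. You cannot deduce $\calA_0=\modd A$ from $Z_{\calA}(\modd A)\subset\R_{<0}$ alone. The pair $(\modd A[2],Z_{\calA})$ is a stability condition with the same central charge and a different heart. So no tilting argument or ``standard fact'' will produce that identification. In the paper the heart $\modd A$ is part of the hypothesis (Lemma~\ref{centralcharge1} and the standing assumption after it). Given that, every nonzero object of $\calA_0$ is semistable of phase $1$ immediately, so this step is not an obstacle. Your (CP1) computation also relies on exactly this assumption.
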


\begin{example}[\textbf{Augmented curves}]\label{augmented}
An example of this construction is the case of coherent systems \cite{systems} \textup{(cf. \cite{comma1})}. Coherent systems are a special case of augmented curves which were introduced and studied by Kuznetsov and Alexeev in \cite{augmentations}. They can be realized as dg-gluings of the bounded derived category of a smooth proper curve $\bounded(\Coh(\calC))$ with $\bounded(\Vect)$, where the gluing datum is encoded by an exceptional object $\mathcal E$ in $\bounded(\Coh(\calC))$. Depending on the genus of the curve and the choice of the exceptional object, this construction produces a rich class of categories, including coherent systems, derived categories of root stacks, and many other examples. This is a special case of our Setup \ref{dgcomma} with $E_0=\mathcal E$ and $A=k$. 

See Example \ref{coh1} for the full relation with augmented curves and for the relevant results from \cite{augmentations}. A natural question is to study the stability spaces of these categories.
\end{example}
Finally, we study what happens after mutating the semiorthogonal decomposition (\ref{sodintro}). The reason is that the same category can have different useful semiorthogonal decompositions, and gluing stability conditions along one decomposition typically gives a rather small subset of the entire space of stability conditions. 

We first prove a general statement about when the $t$-structures glue after a left or right mutation and an appropriate shift; see Lemmas \ref{mutationgluing} and \ref{commamutations}. We then apply this to the dg-comma category $\calD_{E_0}^A$ in Proposition \ref{finalgluing}. The support property follows from the following criterion as a special case of Proposition \ref{testtt}.

\begin{corollary}(= Corollary \ref{hnglue})
Let $\sigma_\calA=(\calA_0,Z_{\calA})$ and $\sigma_{\calB}=(\calB_0,Z_{\calB})$ be stability conditions on the components $\calA$ and $\calB$ of the semiorthogonal decomposition $\calD=\langle\calA,\calB\rangle$ such that
\begin{enumerate}
\item $\Hom_{\calD}^{\leq0}(\calA_0,\calB_0)=0$.
\item $\phi^{-}_{\calB}(B)\geq\phi^{+}_{\calA}(A)$ for all nonzero $A\in\calA_0$ and $B\in\calB_0$.
\end{enumerate}
Then $\sigma_\calA$ and $\sigma_{\calB}$ glue to a stability condition $(\glue,Z_{\gl})$ on $\calD$.
\end{corollary}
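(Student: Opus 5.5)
The plan is to deduce the corollary from Proposition \ref{testtt} by checking \hyperlink{ass:CP11}{\textup{(CP1)}} and \hyperlink{ass:CP22}{\textup{(CP2)}}. Write $\calP_\calA,\calP_\calB$ for the slicings whose hearts are $\calP_\calA(0,1]=\calA_0$ and $\calP_\calB(0,1]=\calB_0$.

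\hyperlink{ass:CP11}{\textup{(CP1)}} is exactly hypothesis (1). For \hyperlink{ass:CP22}{\textup{(CP2)}}, first note what hypothesis (2) says. Apply it to nonzero semistable $A\in\calA_0$ and $B\in\calB_0$. Then $\phi_\calA(A)\le\phi_\calB(B)$. Hence there are $a,b\in(0,1]$ with $a\le b$ such that every nonzero $A\in\calA_0$ has all phases $\le a$ and every nonzero $B\in\calB_0$ has all phases $\ge b$. Concretely, take $a=\sup\{\phi_\calA(A)\}$ and $b=\inf\{\phi_\calB(B)\}$ over semistable objects of the hearts.

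\textbf{Degenerate case.} If $\calA_0=0$ or $\calB_0=0$, then $\calA=0$ or $\calB=0$ and there is nothing to prove.

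\textbf{Main case.} Otherwise, hypothesis (2) gives $a\le b$, so $\calP_\calA(a,1]=0$ and $\calP_\calB(0,b)=0$. This vanishing is the step that uses (2).

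Choose $\alpha\in(0,1)$. Ideally $\alpha\ge a$, with care at the endpoint; if that fails, adjust the choice as below. Then
\[
\calP_\calA(\alpha,\alpha+1]=\calP_\calA(\alpha,1]\ast\calP_\calA(1,\alpha+1].
\]
The first piece $\calP_\calA(\alpha,1]$ vanishes when $\alpha\ge a$. The second piece $\calP_\calA(1,\alpha+1]$ lies in $\calA_0[1]$.

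Similarly,
\[
\calP_\calB(\alpha,\alpha+1]=\calP_\calB(\alpha,1]\ast\calP_\calB(1,1+\alpha].
\]
Here $\calP_\calB(\alpha,1]\subset\calB_0$. The piece $\calP_\calB(1,1+\alpha]$ is a subcategory of $\calB_0[1]$ with phases in $(0,\alpha]$ after the shift, so it vanishes if $\alpha<b$.

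The difficulty is that $a\le b$ may have $a=b$, and there may be no room for $\alpha$ with $a\le\alpha<b$. So I would instead argue directly, without requiring both pieces to vanish. The direct argument is a $\Hom$ computation. Take $A'\in\calA_0$ with phases in $(\alpha,1]$, together with $A''[1]$ where $A''$ has phases in $(0,\alpha]$. Take $B'\in\calB_0$ with phases in $(\alpha,1]$, together with $B''[1]$ where $B''$ has phases in $(0,\alpha]$. We need $\Hom^{\le0}$ from the first family to the second to vanish. Using exact triangles, reduce to four pairings:
\begin{itemize}
\item $\Hom^{\le0}(A',B')=0$ by (1).
\item $\Hom^{\le0}(A''[1],B''[1])=0$ by (1).
\item $\Hom^{\le0}(A''[1],B')=\Hom^{\le -1}(A'',B')=0$ by (1).
\item $\Hom^{\le0}(A',B''[1])=\Hom^{\le1}(A',B'')$.
\end{itemize}
Only the degree-$1$ term survives, $\Hom^1(A',B'')=\Ext^1(A',B'')$ in the glued heart, with $\phi(A')>\alpha\ge\phi(B'')$. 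This is exactly the situation of \hyperlink{ass:CP33}{\textup{(CP3)}} in Proposition \ref{poppintro}.

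So the cleanest route is to verify \hyperlink{ass:CP33}{\textup{(CP3)}}. Pick any $\alpha\in(0,1)$ and take semistable $A,B$ with $\phi_\calB(B)\le\alpha<\phi_\calA(A)$. Hypothesis (2) says $\phi_\calB(B)\ge\phi_\calA(A)$, which contradicts this. So no such pair of nonzero objects exists and \hyperlink{ass:CP33}{\textup{(CP3)}} holds vacuously.

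Proposition \ref{poppintro} then gives \hyperlink{ass:CP22}{\textup{(CP2)}}, and Proposition \ref{testtt} gives the glued stability condition $(\glue,Z_\calA+Z_\calB)$.

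The only real obstacle is making sure Proposition \ref{poppintro} applies as stated. It requires \hyperlink{ass:CP11}{\textup{(CP1)}}, which is hypothesis (1), and it works with the same hearts. If it did not apply, the fallback is the four-term $\Hom$ computation above, where hypothesis (2) kills the $\Ext^1$ term because there are no nonzero objects in those phase ranges.
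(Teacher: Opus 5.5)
Your proposal is correct and matches the paper's proof: (CP1) is hypothesis (1), and hypothesis (2) applied to nonzero semistable objects gives $\phi_{\calB}(B)\geq\phi_{\calA}(A)$, so (CP3) holds vacuously for every $\alpha\in(0,1)$, and Propositions~\ref{popp} and~\ref{gluingg} then give the glued stability condition. The detour through $a=\sup\phi_{\calA}$, $b=\inf\phi_{\calB}$ and the four-term $\Hom$ computation is unnecessary; the latter simply reproves Proposition~\ref{popp}.
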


\subsection*{Organization}
In \S \ref{abelianconstruction} we introduce the gluing construction and in \S \ref{commarelation} we identify a relation between abelian comma categories and glued abelian categories. \S \ref{derivedsod} examines semiorthogonal decompositions of derived categories of glued abelian categories. In \S \ref{torsionpair} we study how to reconstruct an ambient abelian category from a suitable torsion pair. In \S \ref{bimoduledescriptions} we relate glued hearts of $t$-structures to the abelian gluing construction. In \S \ref{dgstuff} we recall the gluing of dg-categories in the sense of \cite{Kuznetsov_2014}, describe the induced abelian bimodule on the hearts, and apply this to dg-comma categories and to gluing $t$-structures under suitable $t$-exactness assumptions. In \S \ref{cpcase}, we prove the general support property criterion for glued stability conditions in the setting of Collins and Polishchuk while in \S \ref{noncpcase} we generalize the support property criterion to cover more general gluing cases.

We then apply the criterion to dg-comma categories in \S \ref{dgcommasec}, recovering and generalizing augmented curves, and study the behavior of the gluing under mutations of semiorthogonal decompositions in \S \ref{mutationsection}.

\subsection*{Notation}
\begin{itemize}
\item $R$ denotes a commutative ring and $\modd R$ denotes the category of R-modules. All abelian categories, dg-categories, and functors are assumed to be $R$-linear. For a finite-dimensional algebra $A$, we write $\modd A$ for the abelian category of finite-dimensional right $A$-modules.

\item By a dg-category, we mean a small $R$-linear dg-category whose homotopy category is additive. We denote by $[\calD]$ its homotopy category. If $F\colon \calD_1 \rightarrow \calD_2$ is a dg-functor, then we denote by $[F]$ the induced functor between the homotopy categories. 

\item If $\calA$ is an abelian category, we write $\calD(\calA)$ and $\bounded(\calA)$ for its unbounded and bounded derived categories, respectively. When we are dealing with their dg-enhancements, we will use the notation $[\bounded(\calA)]$ to mean the homotopy category. By $K_0(\calA)$ we denote the Grothendieck group of $\calA$. 

\item For objects $X,Y$ in a triangulated category $\calD$, we write
\begin{align*}
\Hom^i_{\calD}(X,Y)=\Hom_{\calD}(X,Y[i])
\end{align*}
and $\Hom_{\calD}^{\leq k}(X,Y)$ to mean  $\Hom_{\calD}(X,Y[i])$ for all $i\leq k$ as in \cite{gluing1}.

\item In a semiorthogonal decomposition $\calD=\langle \calA,\calB\rangle$ whose components are equipped with $t$-structures with hearts $\calA_0$ and $\calB_0$, we say that the $t$-structures glue if they satisfy the compatibility condition
\begin{align*}
\Hom_{\calD}^{\leq0}(\calA_0,\calB_0)=0
\end{align*}
as in Proposition \ref{gluedheart}.
\item For any nonzero object $E$ in a triangulated category equipped with a stability condition, we denote by $\phi^+(E)$ and $\phi^-(E)$ the maximum and minimum phase of its Harder--Narasimhan factors as in \cite{bridgeog}.

\end{itemize}

\subsection*{Acknowledgements}
We thank Alekos Robotis and Benjamin Sung for providing feedback on an earlier draft. C.E.\ thanks Devon Stockall for his feedback regarding the abelian gluing construction and Yuki Matsubara for stimulating discussions related to this paper. The authors were supported by the Sapere Aude grant 3120-00076B from the Independent Research Fund Denmark (DFF).
F.H.\ received support from the VILLUM FONDEN, VILLUM Investigator grant 37814. 

\section{Gluing categories along bimodules} \label{gluingabelian}
The gluing of abelian categories was studied in \cite{gluing11} and generalized in \cite{gluing22}. Under suitable representability assumptions, the gluing considered in this section agrees with these constructions. In this case, the glued abelian category is identified with an abelian comma category, as mentioned in $\S \ref{commarelation}$. 

We start by introducing an abelian-categorical variant of the gluing construction for dg-categories using a bimodule as in \cite[\S $4$]{Kuznetsov_2014}. The gluing datum will be a bilinear bifunctor between the two categories. The construction of a category from a bimodule was first studied in the language of categories of matrices in \cite{matrices}; we include the details concerning its categorical structure for the convenience of the reader.

After defining the glued category, we prove that it is abelian under suitable exactness assumptions on this bimodule as in Assumption \ref{exaxtas}. 

\subsection{The abelian gluing construction} \label{abelianconstruction}
\begin{definition} \label{abimod}
Let $\calA,\calB$ be $R$-linear additive categories. An $\calA\!-\!\calB$ \textit{bimodule} is an $R$-linear bifunctor
\begin{align*}
    M\colon\calA^{\opp} \otimes \calB \rightarrow \modd R.
\end{align*}
Given morphisms $f \colon A\to A'$ in $\calA$ and $g \colon B\to B'$ in $\calB$, we use the notation
\begin{align*}
    M(f,g) \colon M(A',B)\longrightarrow M(A,B')
\end{align*}
for the $R$-linear map induced by the morphism $(f^{\opp},g) \colon (A',B)\to(A,B')$ in $\calA^{\opp}\times\calB$. Specifically, 
\begin{align*}
    M(\idd_A,g)& \colon M(A,B)\longrightarrow M(A,B'),&
    M(f,\idd_{B'})& \colon M(A',B')\longrightarrow M(A,B').
\end{align*}
\end{definition}

\begin{definition} \label{gluingdef}
We define the \textit{gluing of $\mathcal A$ and $\mathcal B$ along the bimodule $M$} to be the category $\mathcal A\times_M\mathcal B$ defined as follows: 
\begin{itemize}
    \item \textit{Objects.} An object is a triple $(A,B,\mu)$, where $A\in\mathcal A$, $B\in\mathcal B$, and $\mu\in M(A,B)$.
    \item \textit{Morphisms.} A morphism $(f,g)\colon (A,B,\mu)\to(A',B',\mu')$ is a pair of morphisms $f\colon A\to A'$ in $\mathcal A$ and $g\colon B\to B'$ in $\mathcal B$ such that
    \begin{align*}
        M(\idd_A,g)(\mu)=M(f,\idd_{B'})(\mu')
    \end{align*}
    in $M(A,B')$. This condition says that $\mu$ and $\mu'$ are identified in $M(A,B')$: 
    \begin{center}
    \begin{tikzcd}[column sep=large, row sep=large]
    \mu\in M(A,B) \arrow[dr, maps to, "{M(\idd_A,g)}"'] & & \mu'\in M(A',B') \arrow[dl, maps to, "{M(f,\idd_{B'})}"] \\
    & M(A,B') &
    \end{tikzcd}
    \end{center}
\end{itemize}
\end{definition}
The following result is stated in \cite{matrices} without a proof.

\begin{proposition} \label{rlinear}
The category $\calA\times_M\calB$ is an $R$-linear additive category.
\end{proposition}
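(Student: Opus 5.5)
The plan is to verify the axioms directly, in four steps: it is a category, its Hom-sets are $R$-modules with bilinear composition, it has a zero object, and it has biproducts. The only tools needed are the $R$-linearity of $M$ in each variable and the functoriality of $M$ (contravariant in $\calA$, covariant in $\calB$).

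\textbf{Category structure.} The identity on $(A,B,\mu)$ is $(\idd_A,\idd_B)$, which satisfies the defining condition trivially. Now take composable morphisms $(f,g)\colon(A,B,\mu)\to(A',B',\mu')$ and $(f',g')\colon(A',B',\mu')\to(A'',B'',\mu'')$. We must check that
\begin{align*}
M(\idd_A,g'g)(\mu)=M(f'f,\idd_{B''})(\mu'').
\end{align*}
Using bifunctoriality, the left side becomes $M(\idd_A,g')M(\idd_A,g)(\mu)=M(\idd_A,g')M(f,\idd_{B'})(\mu')$. Since the two variables commute, i.e.\ $M(\idd,g')M(f,\idd)=M(f,g')=M(f,\idd)M(\idd,g')$, this equals $M(f,\idd_{B''})M(\idd_{A'},g')(\mu')$. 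By the second morphism condition this is $M(f,\idd_{B''})M(f',\idd_{B''})(\mu'')=M(f'f,\idd_{B''})(\mu'')$, where contravariance gives the last step. Associativity and unitality are then inherited componentwise from $\calA$ and $\calB$.

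\textbf{$R$-linear structure.} $\Hom((A,B,\mu),(A',B',\mu'))$ is a subset of $\Hom_\calA(A,A')\oplus\Hom_\calB(B,B')$. We show it is an $R$-submodule. The defining condition says $(f,g)$ lies in the kernel of the map
\begin{align*}
(f,g)\longmapsto M(\idd_A,g)(\mu)-M(f,\idd_{B'})(\mu'),
\end{align*}
and this map is $R$-linear because $M$ is $R$-linear in each variable. Composition is bilinear because it is computed componentwise.

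\textbf{Zero object and biproducts.} The zero object is $(0,0,0)$: $M(0,B)=0=M(A,0)$ by additivity, so every pair of zero maps satisfies the condition. The biproduct of $(A_1,B_1,\mu_1)$ and $(A_2,B_2,\mu_2)$ is $(A_1\oplus A_2,B_1\oplus B_2,\mu)$. Here $\mu$ is the image of $(\mu_1,\mu_2)$ under the identification
\begin{align*}
M(A_1\oplus A_2,B_1\oplus B_2)\cong\bigoplus_{i,j}M(A_i,B_j),
\end{align*}
placed in the diagonal components, concretely $\mu=\sum_i M(p_i,\iota_i)(\mu_i)$. We then check that the structure maps $(\iota_i,\iota_i)$ and $(p_i,p_i)$ are morphisms. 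For the inclusion, $M(\iota_1,\idd)(\mu)=\sum_i M(p_i\iota_1,\iota_i)(\mu_i)=M(\idd,\iota_1)(\mu_1)$, since $p_2\iota_1=0$; the projections are handled dually. The biproduct identities $p_i\iota_j=\delta_{ij}$ and $\sum\iota_ip_i=\idd$ hold componentwise. Because we are in a preadditive category, these identities alone characterize a biproduct, so no separate universal-property check is needed.

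\textbf{Main obstacle.} The only step that is not purely formal is the choice of $\mu$ for the direct sum. It must be the diagonal element with vanishing off-diagonal components $M(A_1,B_2)$ and $M(A_2,B_1)$. That choice, together with additivity of $M$ in both variables, is exactly what makes the inclusions and projections into morphisms.
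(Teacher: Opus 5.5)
Your proof is correct and follows essentially the same route as the paper. Both arguments realize each Hom-module as the kernel of the $R$-linear map $(f,g)\mapsto M(\idd_A,g)(\mu)-M(f,\idd_{B'})(\mu')$, take composition componentwise, use $(0,0,0)$ as the zero object, and give the direct sum the diagonal element with vanishing off-diagonal components.

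The differences are minor. You check closure under composition explicitly, whereas the paper just cites functoriality of $M$. You certify the biproduct by showing the structure maps $(\iota_i,\iota_i)$, $(p_i,p_i)$ are morphisms and invoking the identities $p_i\iota_j=\delta_{ij}$, $\sum_i\iota_ip_i=\idd$. The paper instead splits $d_{W,X\oplus Y}=d_{W,X}\oplus d_{W,Y}$ and $d_{X\oplus Y,W}=d_{X,W}\oplus d_{Y,W}$, then takes kernels to obtain the Hom decompositions directly.
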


\begin{proof}
Let $X=(A,B,\mu)$ and $Y=(A',B',\mu')$. Consider the $R$-linear map
\begin{align*}
d_{X,Y}\colon \Hom_{\calA}(A,A') \oplus\Hom_{\calB}(B,B')
&\longrightarrow M(A,B'),\\ (f,g)&\longmapsto M(\idd_A,g)(\mu)-M(f,\idd_{B'})(\mu').
\end{align*}
By definition,
\begin{align*}
\Hom_{\calA\times_M\calB}(X,Y)=\ker(d_{X,Y}),
\end{align*}
and as a result it is an $R$-module. Identities and componentwise composition are well defined by functoriality of $M$, and the category axioms follow from those of $\calA$ and $\calB$. Finally, composition is $R$-bilinear because it is defined componentwise. This implies that $\calA\times_M\calB$ is an $R$-linear category and it remains to check that it is additive. 

It is clear that $(0_{\calA},0_{\calB},0)$ is a zero object. Let $ X=(A,B,\mu), Y=(A',B',\mu') \in  \calA \times_M \calB$.
Since $M$ is additive in both variables, there is a canonical decomposition
\begin{align*}
M(A\oplus A',B\oplus B') \cong M(A,B)\oplus M(A,B')\oplus M(A',B)\oplus M(A',B').
\end{align*}
We let $\mu\oplus\mu'$ be the element which corresponds to $(\mu,0,0,\mu')$ and set
\begin{align*}
X\oplus Y\coloneqq(A\oplus A',B\oplus B',\mu\oplus\mu').
\end{align*}
By the additivity of $M$ we have that for every object $W$:
\begin{align*}
d_{W,X\oplus Y}
&=d_{W,X}\oplus d_{W,Y},
&d_{X\oplus Y,W}
&=d_{X,W}\oplus d_{Y,W}.
\end{align*}
We consider the kernels and by using the fact that
$\Hom_{\calA\times_M\calB}(X,Y)=\ker(d_{X,Y})$, we obtain
\begin{align*}
\Hom_{\calA\times_M\calB}(W,X\oplus Y)
&\cong \Hom_{\calA\times_M\calB}(W,X) \oplus \Hom_{\calA\times_M\calB}(W,Y),\\
\Hom_{\calA\times_M\calB}(X\oplus Y,W)
&\cong \Hom_{\calA\times_M\calB}(X,W) \oplus \Hom_{\calA\times_M\calB}(Y,W).
\end{align*}
Therefore $X \oplus Y$ is both a product and coproduct and $\calA\times_M\calB$ is additive.
\end{proof}
For the rest of this subsection, we assume that $\calA$ and $\calB$ are $R$-linear abelian categories and that the following assumption holds
for the bimodule $M$.

\begin{assumption}\label{exaxtas}
We assume that $M$ is left exact in the $\calB$-variable and left exact in the contravariant $\calA$-variable. That is, for every $A\in\calA$ and every $B\in\calB$, the functors
\begin{align*}
    M(A,-) \colon \calB\longrightarrow  \modd R,\qquad 
    M(-,B) \colon \calA^{\opp}\longrightarrow  \modd R
\end{align*}
are left exact.
\end{assumption}

\begin{proposition} \label{kernelcokernel}
Any morphism in $\calA\times_M\calB$ admits a kernel and a cokernel.
\end{proposition}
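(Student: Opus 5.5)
Let $(f,g)\colon X=(A,B,\mu)\to Y=(A',B',\mu')$ be a morphism in $\calA\times_M\calB$. The plan is to construct the kernel and cokernel componentwise from kernels and cokernels in $\calA$ and $\calB$, and to use Assumption \ref{exaxtas} to produce the structure elements in $M$. The two constructions are not symmetric. The kernel only needs left exactness in the $\calB$-variable. The cokernel needs left exactness in the contravariant $\calA$-variable, since $M(-,B)$ turns cokernels into kernels.

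\emph{Kernel.} Let $i\colon K\to A$ be a kernel of $f$ and $j\colon L\to B$ a kernel of $g$. I look for $\kappa\in M(K,L)$ with $M(\idd_K,j)(\kappa)=M(i,\idd_B)(\mu)$; this is exactly the condition that $(i,j)$ is a morphism $(K,L,\kappa)\to X$.

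By left exactness of $M(K,-)$, the sequence $0\to M(K,L)\to M(K,B)\to M(K,B')$ is exact. So such a $\kappa$ exists, and is unique, provided $M(\idd_K,g)M(i,\idd_B)(\mu)=0$. By bifunctoriality and the morphism condition for $(f,g)$,
\begin{align*}
M(\idd_K,g)M(i,\idd_B)(\mu)=M(i,\idd_{B'})M(\idd_A,g)(\mu)=M(i,\idd_{B'})M(f,\idd_{B'})(\mu')=M(fi,\idd_{B'})(\mu')=0.
\end{align*}
For the universal property, take $(u,v)\colon (C,D,\nu)\to X$ with $(f,g)(u,v)=0$. Then $u$ and $v$ factor uniquely as $u=iu'$ and $v=jv'$. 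It remains to check that $(u',v')$ is a morphism, i.e.\ $M(\idd_C,v')(\nu)=M(u',\idd_L)(\kappa)$ in $M(C,L)$. Both sides map to the same element of $M(C,B)$ under the injection $M(\idd_C,j)$, using the defining identity of $\kappa$ and the morphism condition for $(u,v)$. So they are equal.

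\emph{Cokernel.} Dually, let $p\colon A'\to P$ be a cokernel of $f$ and $q\colon B'\to Q$ a cokernel of $g$. I want $\pi\in M(P,Q)$ with $M(p,\idd_Q)(\pi)=M(\idd_{A'},q)(\mu')$. Left exactness of $M(-,Q)$ on $\calA^{\opp}$ gives an exact sequence $0\to M(P,Q)\to M(A',Q)\to M(A,Q)$, where the maps are induced by $p$ and $f$. So it suffices to show $M(f,\idd_Q)M(\idd_{A'},q)(\mu')=0$. This element equals $M(\idd_A,q)M(f,\idd_{B'})(\mu')=M(\idd_A,q)M(\idd_A,g)(\mu)=M(\idd_A,qg)(\mu)=0$. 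The universal property is then verified by the mirror of the injectivity argument above, now using injectivity of $M(p,\idd)$.

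I expect no serious obstacle. The only care needed is the bookkeeping: which variable's exactness is used for which construction, and the interchange identity $M(f,\idd)M(\idd,g)=M(\idd,g)M(f,\idd)=M(f,g)$ from bifunctoriality. Uniqueness of the factorizations is inherited componentwise from $\calA$ and $\calB$, since morphisms in $\calA\times_M\calB$ are pairs.
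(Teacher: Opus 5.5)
Your proof is correct and follows the paper's argument: you build the kernel componentwise and obtain the gluing element from left exactness of $M(K,-)$ applied to $0\to L\to B\to B'$, exactly as the paper does. The only difference is that the paper gets the cokernel by duality via $(\calA\times_M\calB)^{\opp}\cong\calB^{\opp}\times_{M^{\opp}}\calA^{\opp}$, whereas you carry out the dual computation directly using left exactness of $M(-,Q)$, and you also check the universal properties explicitly where the paper leaves them to the reader.
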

\begin{proof}
Let $(f,g) \colon (A,B,\mu)\to(A',B',\mu')$ be a morphism in $\calA\times_M\calB$. By duality and the canonical isomorphism of categories $(\calA\times_M\calB)^{\opp}\cong  \calB^{\opp}\times_{M^{\opp}}\calA^{\opp}$ it suffices to show that $(f,g)$ admits a kernel.

Let $i_A \colon K_A\to A$ and $i_B \colon K_B\to B$ be the kernels of $f$ and $g$ respectively. We first construct the gluing element for the kernel. Since $(f,g)$ is a morphism, we have
\begin{align*} 
     M(\idd_A,g)(\mu)=M(f,\idd_{B'})(\mu').
\end{align*}
By the functoriality of $M$ we get
\begin{align*}
  M(i_A,g)(\mu)  &=   M(i_A,\idd_{B'})\left(M(\idd_A,g)(\mu)\right)\\
    &=  M(i_A,\idd_{B'})\left(M(f,\idd_{B'})(\mu')\right)\\
    &= M(f\circ i_A,\idd_{B'})(\mu') \\
    &=0
\end{align*}
since $i_A$ is the kernel of $f$ which means that $f\circ i_A=0$. 
Similarly, by the functoriality of $M$, we also have
\begin{align*}
    M(i_A,g) = M(\idd_{K_A},g)\circ M(i_A,\idd_B)
\end{align*}
and as a result
\begin{align*}
    M(\idd_{K_A},g)\left(M(i_A,\idd_B)(\mu)\right)=0.
\end{align*}
Thus, $M(i_A,\idd_B)(\mu)$ lies in the kernel of
\begin{align*}
    M(\idd_{K_A},g) \colon M(K_A,B)\longrightarrow M(K_A,B').
\end{align*}
By left exactness of $M(K_A,-)$, the exact sequence $0\to K_B\xrightarrow{i_B}B\xrightarrow{g}B'$ in $\calB$ induces an exact sequence
\begin{align*}
    0\longrightarrow M(K_A,K_B)\xrightarrow{M(\idd_{K_A},i_B)}M(K_A,B)\xrightarrow{M(\idd_{K_A},g)}M(K_A,B').
\end{align*}
Therefore there exists a unique element $\mu_K\in M(K_A,K_B)$ such that
\begin{equation} \label{dag2}
 M(\idd_{K_A},i_B)(\mu_K)=M(i_A,\idd_B)(\mu).   
\end{equation} 
We set the kernel of $(f,g)$ to be $(K_A,K_B,\mu_K)$. Equation (\ref{dag2})
says that
\begin{align*}
  (i_A,i_B) \colon (K_A,K_B,\mu_K)\to(A,B,\mu)  
\end{align*}
is a morphism in $\calA\times_M\calB$. Moreover,
\begin{align*}
    (f,g)\circ(i_A,i_B)=(f\circ i_A,g\circ i_B)=(0,0).
\end{align*}
The universal property follows similarly by the universal property of the components.
\end{proof}

\begin{theorem} \label{isabelian}
The glued category $\calA\times_M\calB$ is abelian.
\end{theorem}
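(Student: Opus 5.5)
By Proposition \ref{rlinear} the category $\calA\times_M\calB$ is additive, and by Proposition \ref{kernelcokernel} every morphism has a kernel and a cokernel. It therefore remains to show that for every morphism $(f,g)\colon X=(A,B,\mu)\to Y=(A',B',\mu')$ the canonical map $\coim(f,g)\to\im(f,g)$ is an isomorphism. The strategy is to compute kernels and cokernels explicitly and observe that they are computed componentwise. The kernel $(K_A,K_B,\mu_K)$ was built in the proof of Proposition \ref{kernelcokernel} with underlying components $\ker f,\ker g$. Dually, using the isomorphism $(\calA\times_M\calB)^{\opp}\cong\calB^{\opp}\times_{M^{\opp}}\calA^{\opp}$, the cokernel is $(C_A,C_B,\mu_C)$ with $C_A=\operatorname{coker} f$ and $C_B=\operatorname{coker} g$. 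Here $\mu_C\in M(C_A,C_B)$ is obtained from the left exactness of $M(-,C_B)$, i.e.\ exactness of $0\to M(C_A,C_B)\to M(A',C_B)\to M(A,C_B)$.

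Since both kernels and cokernels have underlying components given by the kernels and cokernels in $\calA$ and $\calB$, the coimage $\operatorname{coker}(\ker(f,g))$ has components $(\coim f,\coim g)$ and the image $\ker(\operatorname{coker}(f,g))$ has components $(\im f,\im g)$. The canonical morphism $\coim(f,g)\to\im(f,g)$ is a morphism in the glued category whose components are the canonical maps $\coim f\to\im f$ and $\coim g\to\im g$. These are isomorphisms because $\calA$ and $\calB$ are abelian.

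It then suffices to check that a morphism $(u,v)\colon(A_1,B_1,\mu_1)\to(A_2,B_2,\mu_2)$ with $u,v$ isomorphisms is an isomorphism in $\calA\times_M\calB$. Its inverse candidate is $(u^{-1},v^{-1})$, and we must verify the compatibility $M(\idd_{A_2},v^{-1})(\mu_2)=M(u^{-1},\idd_{B_1})(\mu_1)$. To do so, start from $M(\idd_{A_1},v)(\mu_1)=M(u,\idd_{B_2})(\mu_2)$ and apply the bijection $M(u^{-1},v^{-1})$, using functoriality of $M$ to cancel $u$ and $v$ against their inverses.

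The main obstacle is making precise that the image and coimage objects really carry the componentwise underlying objects, with the canonical comparison map given componentwise; the gluing elements themselves are uniquely determined by the universal properties. I would handle this by noting that the forgetful functor $\calA\times_M\calB\to\calA\times\calB$ preserves kernels and cokernels by their construction. Applying the forgetful functor to the canonical factorization $X\to\coim(f,g)\to\im(f,g)\to Y$ then yields the canonical factorizations in $\calA$ and $\calB$, by uniqueness of the induced maps, because the forgetful functor is faithful.
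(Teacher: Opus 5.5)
Your proposal is correct and follows the same route as the paper: additivity and existence of kernels and cokernels from the two preceding propositions, componentwise computation of kernels and cokernels, and identification of the canonical map $\coim(f,g)\to\im(f,g)$ with the componentwise canonical maps $\coim f\to\im f$ and $\coim g\to\im g$. You also make explicit two points the paper leaves implicit, namely the construction of the cokernel gluing element and the check that a morphism with invertible components is invertible in $\calA\times_M\calB$.
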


\begin{proof}
By Proposition \ref{rlinear}, $\calA\times_M\calB$ is additive, and by Proposition \ref{kernelcokernel} it admits kernels and cokernels. Moreover, these are computed componentwise which implies that for every morphism $(f,g)$, the canonical morphism
\begin{align*}
   \coim(f,g)\longrightarrow\im(f,g)
\end{align*}
is componentwise the canonical morphisms
$\coim(f)\to\im(f)$ and
$\coim(g)\to\im(g)$, which are isomorphisms since $\calA$ and $\calB$ are abelian. As a result, $\calA\times_M\calB$ is abelian.
\end{proof}

\begin{remark}\label{gluednoetherian}
We note two more results regarding abelian glued categories. One can show that the abelian category $\calA \times_M \calB$ is Noetherian if and only if both $\calA$ and $\calB$ are Noetherian and that $K_0(\calA \times_M \calB) \simeq K_0(\calA) \oplus K_0(\calB)$. The proofs are similar to \cite[Proposition 6.8 and Theorem 5.12]{commaDG}. 
\end{remark}

We conclude this section with a result analogous to \cite[Lemma 4.7]{Kuznetsov_2014}.

\begin{proposition} \label{isobimod}
Let $\calA$ and $\calB$ be $R$-linear additive categories and let
\begin{align*}
    M,N \colon \calA^{\opp}\times\calB\longrightarrow \modd R
\end{align*}
be $\calA\!-\!\calB$ bimodules. Suppose that $\eta \colon M\xrightarrow{\sim}N$ is a natural isomorphism of bimodules. Then there is an isomorphism of the glued $R$-linear categories
\begin{align*}
    \calA\times_M\calB\simeq\calA\times_N\calB.
\end{align*}
\end{proposition}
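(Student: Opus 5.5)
We define a functor $\Phi_\eta\colon\calA\times_M\calB\to\calA\times_N\calB$ that is the identity on the underlying components and transports the gluing element along $\eta$. On objects we set
\begin{align*}
\Phi_\eta(A,B,\mu)\coloneqq(A,B,\eta_{A,B}(\mu)),
\end{align*}
where $\eta_{A,B}\colon M(A,B)\to N(A,B)$ is the component of $\eta$ at $(A,B)$. On morphisms we set $\Phi_\eta(f,g)\coloneqq(f,g)$. The inverse $\Phi_{\eta^{-1}}$ is defined in the same way using the inverse natural isomorphism $\eta^{-1}\colon N\to M$.

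The main step is to check that $(f,g)$ is still a morphism after applying $\Phi_\eta$. Let $(f,g)\colon(A,B,\mu)\to(A',B',\mu')$ be a morphism, so that $M(\idd_A,g)(\mu)=M(f,\idd_{B'})(\mu')$ holds in $M(A,B')$. Naturality of $\eta$ for the morphism $(\idd_A,g)$ in $\calA^{\opp}\times\calB$ gives
\begin{align*}
N(\idd_A,g)\circ\eta_{A,B}=\eta_{A,B'}\circ M(\idd_A,g).
\end{align*}
Naturality for the morphism $(f^{\opp},\idd_{B'})$ gives
\begin{align*}
N(f,\idd_{B'})\circ\eta_{A',B'}=\eta_{A,B'}\circ M(f,\idd_{B'}).
\end{align*}
Applying $\eta_{A,B'}$ to the defining equation and using these two identities yields
\begin{align*}
N(\idd_A,g)(\eta_{A,B}(\mu))=N(f,\idd_{B'})(\eta_{A',B'}(\mu')).
\end{align*}
This is exactly the morphism condition in $\calA\times_N\calB$. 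By symmetry, the same argument with $\eta^{-1}$, which is also natural, shows that $\Phi_{\eta^{-1}}$ sends morphisms to morphisms. In particular, the two Hom-submodules, viewed as kernels $\ker(d_{X,Y})$ inside $\Hom_\calA(A,A')\oplus\Hom_\calB(B,B')$ as in the proof of Proposition \ref{rlinear}, coincide.

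Functoriality and $R$-linearity of $\Phi_\eta$ are immediate, because composition, identities and the $R$-module structure on Hom-sets are all computed componentwise in $\calA$ and $\calB$. Finally, $\Phi_{\eta^{-1}}\circ\Phi_\eta$ and $\Phi_\eta\circ\Phi_{\eta^{-1}}$ are the identity on objects, since $\eta^{-1}_{A,B}\eta_{A,B}=\idd$, and the identity on morphisms. Hence $\Phi_\eta$ is an isomorphism of $R$-linear categories, not merely an equivalence.

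The only substantive point is the naturality computation above. The one thing to be careful about there is the variance convention of Definition \ref{abimod}: the morphism $f$ must be read as $f^{\opp}$ when naturality is applied in the contravariant $\calA$-variable.
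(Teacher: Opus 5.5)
Your proof is correct and complete. The paper states this proposition without proof, only remarking that it is analogous to \cite[Lemma 4.7]{Kuznetsov_2014}. Your construction is the standard argument that fills this in: identity on components and on morphisms, transport of the gluing element along $\eta$, preservation of the morphism condition via the two naturality squares (with the correct reading of $f$ as $f^{\opp}$), and $\eta^{-1}$ as the inverse. It yields a genuine isomorphism of $R$-linear categories, as claimed.
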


\subsubsection{Relation with abelian comma categories} \label{commarelation}
Abelian comma categories are an example of the abelian gluing construction. Indeed, we consider the following diagram of abelian categories
\begin{align*}
    \calB \xrightarrow{G} \calC \xleftarrow{F} \calA
\end{align*}
in which $F$ is right exact and $G$ is left exact. We define the following bimodule
\begin{align*}
    M \colon \calA^{\opp}\times\calB\longrightarrow \mathbf{Ab},
    \qquad
    M(A,B)\coloneqq\Hom_{\calC}(F(A),G(B)).
\end{align*}
For morphisms $f \colon A\to A'$ in $\calA$ and $g \colon B\to B'$ in $\calB$, the induced map
\begin{align*}
    M(f,g) \colon M(A',B)\longrightarrow M(A,B')
\end{align*}
is given by sending a morphism $\varphi \colon F(A')\to G(B)$ to
\begin{align*}
    G(g)\circ \varphi\circ F(f) \colon F(A)\longrightarrow G(B').
\end{align*}
Then an object of $\calA\times_M\calB$ is a triple $(A,B,\varphi)$, where 
$ \varphi\in \Hom_{\calC}(F(A),G(B))$.
Moreover, a morphism
\begin{align*}
    (f,g) \colon (A,B,\varphi)\longrightarrow (A',B',\varphi')
\end{align*}
in $\calA\times_M\calB$ satisfies
\begin{align*}
    M(\idd_A,g)(\varphi)=M(f,\idd_{B'})(\varphi').
\end{align*}
By the definition of $M$, this condition is exactly
\begin{align*}
    G(g)\circ \varphi=\varphi'\circ F(f).
\end{align*}
It follows that the glued category $\calA\times_M\calB$ is the abelian comma category $(F\downarrow G)$. Abelian comma categories were studied separately in \cite{commaDG} where it was proved that $(F \downarrow G)$ is abelian among other results. The right exactness of $F$ and the left exactness of $G$, imply Assumption \ref{exaxtas}. In particular when $M$ is right representable, i.e. there exists an additive functor $\phi  \colon  \calB \rightarrow \calA$ such that $M(A,B)=\Hom_{\calA}(A,\phi(B))$, then the glued abelian category $\calA \times_M \calB$ is identified with the abelian comma category $(\idd_{\calA} \downarrow \phi)$ in which $\phi$ is left exact as a consequence of Corollary \ref{left1}.

Many works focus on this special case of $\calB=\calC$ and $G=\idd$;  In particular \cite{gluing11} provides a general framework for gluing abelian categories of this type. This was further generalized in \cite{gluing22}. Further works study the case of abelian comma categories;
see \cite{ogcomma,ex1,ex2}. Its relations with recollements, cotorsion pairs, and extensions of abelian categories are discussed in \cite{ex3,ex4}, while the connection with slice categories is studied in \cite{ex5}.

There is a clearer relation between glued abelian categories with abelian comma categories under smallness assumptions and using indization. Following \cite[\S 8.6]{definable}, we define for any small abelian category $\calA$, $ \calA^{\wedge \text{,add,l}}$ to be the category of additive left exact functors $\calA^{\opp} \rightarrow \mathbf{Ab}$. In particular by \cite[Corollary 8.6.3 and Theorem 8.6.5]{definable}, it is equivalent to the indization $\text{Ind}(\calA)$ which is an abelian category. In the following proposition, we write $\text{Ind}(\calA)$ for $ \calA^{\wedge \text{,add,l}}$ for notation purposes.
\begin{proposition}
Assume that $\calA$ is essentially small. Then there are functors
\begin{align*}
    Y&\colon\calA\longrightarrow \mathrm{Ind}(\calA),
    &Y(A)&=\Hom_{\calA}(-,A),\\
    \Phi&\colon\calB\longrightarrow \mathrm{Ind}(\calA),
    &\Phi(B)&=M(-,B),
\end{align*}
such that $Y$ is exact, $\Phi$ is left exact, and there is an
equivalence of abelian categories
\begin{align*}
    \calA\times_M\calB\simeq(Y\downarrow\Phi).
\end{align*}
\end{proposition}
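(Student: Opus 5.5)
We first check that $Y$ and $\Phi$ land in $\mathrm{Ind}(\calA)=\calA^{\wedge\text{,add,l}}$ and have the stated exactness, and then identify the comma category with $\calA\times_M\calB$ via the Yoneda lemma.

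For well-definedness: $Y(A)=\Hom_{\calA}(-,A)$ is additive and left exact on $\calA^{\opp}$, since $\Hom$ is left exact in the first variable. By Assumption \ref{exaxtas}, $\Phi(B)=M(-,B)$ is additive and left exact in the contravariant $\calA$-variable. Functoriality of $\Phi$ in $B$ comes from the maps $M(\idd,g)$. If we work with $R$-modules rather than abelian groups, we forget to $\mathbf{Ab}$; this is harmless, since $\Hom_{\mathrm{Ind}(\calA)}$ out of a representable still recovers $M(A,B)$ as a set, and $R$-linearity is inherited.

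For exactness we use the standard fact, from \cite[\S 8.6]{definable} and the description of $\mathrm{Ind}(\calA)$, that kernels in $\calA^{\wedge\text{,add,l}}$ are computed pointwise. Left exactness of $\Phi$ then follows: given $0\to B'\to B\to B''$ exact, Assumption \ref{exaxtas} makes $0\to M(A,B')\to M(A,B)\to M(A,B'')$ exact for each $A$, hence exact in $\mathrm{Ind}(\calA)$. For $Y$ we quote \cite[Theorem 8.6.5]{definable}: the Yoneda embedding $\calA\to\mathrm{Ind}(\calA)$ is exact. The subtle point is right exactness, because cokernels in $\mathrm{Ind}(\calA)$ are sheafified rather than pointwise. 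This is the step I expect to require care, and I would rely on the reference rather than reprove it.

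For the equivalence: an object of $(Y\downarrow\Phi)$ is a triple $(A,B,\varphi)$ with $\varphi\colon Y(A)\to\Phi(B)$ a natural transformation. By the Yoneda lemma, $\varphi$ corresponds to the element $\mu=\varphi_A(\idd_A)\in M(A,B)$. Naturality of Yoneda in both variables gives the following: for $f\colon A\to A'$ and $g\colon B\to B'$, the composite $\Phi(g)\circ\varphi$ corresponds to $M(\idd_A,g)(\mu)$, and $\varphi'\circ Y(f)$ corresponds to $M(f,\idd_{B'})(\mu')$. Hence the comma condition $\Phi(g)\circ\varphi=\varphi'\circ Y(f)$ is exactly the morphism condition of Definition \ref{gluingdef}.

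We then define the functor $\calA\times_M\calB\to(Y\downarrow\Phi)$ by $(A,B,\mu)\mapsto(A,B,\hat\mu)$, where $\hat\mu$ is the Yoneda transform of $\mu$, and send $(f,g)$ to itself. The preceding paragraph shows this is well defined and fully faithful, and bijectivity of the Yoneda correspondence gives that it is bijective on objects. It is therefore an isomorphism of additive categories, hence an equivalence of abelian categories, using Theorem \ref{isabelian} and the abelianness of comma categories from \cite{commaDG}. Essential smallness of $\calA$ is used only so that $\mathrm{Ind}(\calA)$ is a legitimate (locally small) category; we may replace $\calA$ by a small skeleton without changing anything.
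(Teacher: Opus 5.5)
Your proposal is correct and follows essentially the same route as the paper. Exactness of the Yoneda embedding is quoted from Kashiwara--Schapira \S 8.6, and left exactness of $\Phi$ comes from Assumption~\ref{exaxtas}; your remark that kernels in $\mathrm{Ind}(\calA)$ are computed pointwise makes explicit a step the paper leaves implicit. The Yoneda correspondence $M(A,B)\simeq\Hom_{\mathrm{Ind}(\calA)}(Y(A),\Phi(B))$ then turns the gluing condition into the comma condition, exactly as in the paper, which likewise concludes abelianness of the comma category via \cite{commaDG}.
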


\begin{proof} 
By \cite[Theorem 8.6.1 (ii)]{definable}, the Yoneda embedding
\begin{align*}
    Y\colon\calA\longrightarrow \text{Ind}(\calA),\qquad A\longmapsto\Hom_{\calA}(-,A)
\end{align*}
is seen to be exact. By Assumption \ref{exaxtas}, $M(-,B)$ is left exact for every $B\in\calB$, and consequently it is an object of $\text{Ind}(\calA)$. We therefore obtain an additive functor
\begin{align*}
    \Phi\colon\calB\longrightarrow \text{Ind}(\calA),\qquad B\longmapsto M(-,B).
\end{align*}
The left exactness of $M(A,-)$ for every $A\in\calA$ implies that $\Phi$ is left exact.
By the Yoneda lemma, there is a natural isomorphism
\begin{align*}
    M(A,B) \simeq \Hom_{\text{Ind}(\calA)}(Y(A),\Phi(B)).
\end{align*}
Under this isomorphism, an element $\mu\in M(A,B)$ corresponds to a morphism
$\widetilde{\mu}\colon Y(A)\to\Phi(B)$. For a morphism $(f,g)\colon (A,B,\mu) \rightarrow (A',B',\mu') $ the condition
\begin{align*}
    M(\idd_A,g)(\mu) =  M(f,\idd_{B'})(\mu')
\end{align*}
is equivalent to
\begin{align*}
    \Phi(g)\circ\widetilde{\mu} = \widetilde{\mu'}\circ Y(f).
\end{align*}
This is precisely the compatibility condition for a morphism in the comma category. Thus, there is an equivalence of categories
\begin{align*}
    \calA\times_M\calB \simeq (Y\downarrow\Phi).
\end{align*}
Since $Y$ is right exact and $\Phi$ is left exact, the comma category $(Y\downarrow\Phi)$ is abelian by ~\cite[Theorem 4.4]{commaDG}.
\end{proof}

\subsubsection{Derived category of glued abelian categories}\label{derivedsod}
Associated to any glued abelian category there is a natural semiorthogonal decomposition of its derived category. We make this explicit in Proposition \ref{gluedderivedsod}. Its relation with the analogous result of \cite[Corollary 4.5]{Kuznetsov_2014} is exhibited in Remark \ref{relrel}.

We keep Assumption \ref{exaxtas}, so that $\calA\times_M\calB$ is abelian.
The inclusions
\begin{align*}
    i_{\calA} \colon \calA \rightarrow \calA\times_M\calB,\quad A \mapsto (A,0,0), \qquad
    i_{\calB} \colon \calB \rightarrow \calA\times_M\calB,\quad B \mapsto (0,B,0)
\end{align*}
are exact and fully faithful.  Exactness follows from the fact that kernels and cokernels in $\calA\times_M\calB$ are computed componentwise.
We consider the exact projection functors
\begin{align*}
p_{\calA} \colon \calA\times_M\calB\longrightarrow \calA,\qquad
    (A,B,\mu)\longmapsto A, \\
    p_{\calB} \colon \calA\times_M\calB\longrightarrow \calB,\qquad
    (A,B,\mu)\longmapsto B,
\end{align*}
which are left and right adjoint to $i_{\calA}$ and $i_{\calB}$, respectively. 
Hence they induce derived embeddings
\begin{align*}
    I_{\calA} \colon \calD(\calA) \rightarrow \calD(\calA\times_M\calB), \qquad
    I_{\calB} \colon \calD(\calB) \rightarrow \calD(\calA\times_M\calB).
\end{align*}
In particular, there is a right adjoint to $I_{\calB} \colon$
\begin{align*}
    I_{\calB}^! \colon \calD(\calA\times_M\calB)\longrightarrow \calD(\calB), \qquad
    (A^\bullet,B^\bullet,\mu^\bullet)\longmapsto B^\bullet,
\end{align*}
satisfying $I_{\calB}^!\circ I_{\calA}=0$.

The bounded version of the following proposition is implicit in
\cite[\S 2]{sodsod} in a more general setting. We include the details since it applies equally to
the unbounded derived categories. 
\begin{proposition}\label{gluedderivedsod}
    The triangulated category $\calD(\calA\times_M\calB)$ admits a semiorthogonal decomposition
    \begin{align*}
        \calD(\calA\times_M\calB) = \langle I_{\calA}(\calD(\calA)),\, I_{\calB}(\calD(\calB)) \rangle.
    \end{align*}
\end{proposition}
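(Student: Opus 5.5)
The plan has three steps: show that $I_{\calA}$ and $I_{\calB}$ are fully faithful, check the semiorthogonality condition, and show that every object sits in a triangle with outer terms in the two subcategories. Everything rests on two facts already recorded above. The embeddings $i_{\calA}, i_{\calB}$ and the projections $p_{\calA}, p_{\calB}$ are all exact. Moreover $p_{\calA}\dashv i_{\calA}$ and $i_{\calB}\dashv p_{\calB}$.

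\textbf{Full faithfulness.} Since exact functors pass to derived categories termwise, we get $I_{\calA}$, $I_{\calB}$, $P_{\calA}$ and $P_{\calB}$. The adjunctions between exact functors lift to adjunctions $P_{\calA}\dashv I_{\calA}$ and $I_{\calB}\dashv P_{\calB}=I_{\calB}^!$ on unbounded derived categories: the unit and counit are termwise, and the triangle identities hold termwise. Because $p_{\calA}i_{\calA}=\idd_{\calA}$ and $p_{\calB}i_{\calB}=\idd_{\calB}$ strictly, the counit $P_{\calA}I_{\calA}\to\idd$ and the unit $\idd\to P_{\calB}I_{\calB}$ are isomorphisms. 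Hence $I_{\calA}$ and $I_{\calB}$ are fully faithful, and their essential images are triangulated subcategories.

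\textbf{Semiorthogonality.} For $X\in\calD(\calB)$ and $Y\in\calD(\calA)$ we compute
\begin{align*}
\Hom(I_{\calB}X,I_{\calA}Y)\cong\Hom(X,P_{\calB}I_{\calA}Y)=\Hom(X,0)=0.
\end{align*}
This is exactly the vanishing required for $\langle I_{\calA}(\calD(\calA)),I_{\calB}(\calD(\calB))\rangle$ in the paper's convention, where $\Hom(\calB,\calA)=0$. It matches the stated property $I_{\calB}^!\circ I_{\calA}=0$.

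\textbf{Generation.} This is the substantive step. For an object $E=(A,B,\mu)$ of $\calA\times_M\calB$ there is a natural short exact sequence
\begin{align*}
0\to(0,B,0)\to(A,B,\mu)\to(A,0,0)\to0.
\end{align*}
The inclusion $(0,\idd)$ is a morphism because both sides of the compatibility condition live in $M(0,B)=0$. The projection $(\idd,0)$ is a morphism because both sides live in $M(A,0)=0$. Exactness can be checked componentwise.

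This sequence is functorial in $E$, since it is the counit $i_{\calB}p_{\calB}E\to E$ followed by the unit $E\to i_{\calA}p_{\calA}E$. So it extends degreewise to any complex $E^\bullet$ and gives a degreewise split-free short exact sequence of complexes
\begin{align*}
0\to I_{\calB}P_{\calB}E^\bullet\to E^\bullet\to I_{\calA}P_{\calA}E^\bullet\to0.
\end{align*}
Every short exact sequence of complexes yields a distinguished triangle in the derived category, so we obtain the triangle
\begin{align*}
I_{\calB}I_{\calB}^!E\to E\to I_{\calA}P_{\calA}E\to.
\end{align*}
Its outer terms lie in the two subcategories.

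The only point needing care is that all of this works in the unbounded setting without resolutions. It does, because every functor involved is exact and hence computed termwise, and the triangles come from termwise short exact sequences. The proposition then follows.
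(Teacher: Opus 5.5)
Your proof is correct and follows the paper's argument. Semiorthogonality comes from the adjunction $I_{\calB}\dashv I_{\calB}^!$ together with $I_{\calB}^!\circ I_{\calA}=0$, and generation comes from the degreewise short exact sequences $0\to(0,B^n,0)\to(A^n,B^n,\mu^n)\to(A^n,0,0)\to0$, assembled into a distinguished triangle. Your additional justifications fill in points the paper only asserts: full faithfulness of $I_{\calA},I_{\calB}$ from $p_{\calA}i_{\calA}=\idd$ and $p_{\calB}i_{\calB}=\idd$, and compatibility with the differentials via naturality of the unit and counit.
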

\begin{proof}
For semiorthogonality, we note
\begin{align*}
    \Hom_{\calD(\calA\times_M\calB)}(I_{\calB}(B^\bullet), I_{\calA}(A^\bullet))
    &\simeq \Hom_{\calD(\calB)}(B^\bullet, I_{\calB}^! \circ I_{\calA}(A^\bullet))\\
    &=0.
\end{align*}
We will show that every object of $\calD(\calA\times_M\calB)$ is an extension of objects in the two subcategories. Let $E^\bullet=(A^\bullet,B^\bullet,\mu^\bullet)$
be an object of $\calD(\calA\times_M\calB)$ and consider the following family of short exact sequences
\begin{align*}
    0\longrightarrow (0,B^n,0) \xrightarrow{(0,\idd_{B^n})} (A^n,B^n,\mu^n) \xrightarrow{(\idd_{A^n},0)} (A^n,0,0)  \longrightarrow 0
\end{align*}
in $\calA \times_M \calB$ for any $n \in \Z$. The pairs of morphisms are actually morphisms in $\calA\times_M\calB$ since the following terms are trivial
\begin{align*}
    M(\idd_0,\idd_{B^n})(0)=M(0,\idd_{B^n})(\mu^n), \qquad
    M(\idd_{A^n},0)(\mu^n)=M(\idd_{A^n},\idd_0)(0).
\end{align*}
These degreewise short exact sequences are compatible with the differentials and therefore assemble into a short exact sequence of complexes. They induce a distinguished triangle in $\calD(\calA\times_M\calB)$:
\begin{align*}
    I_{\calB}(B^\bullet) \longrightarrow E^\bullet \longrightarrow
    I_{\calA}(A^\bullet) \longrightarrow I_{\calB}(B^\bullet)[1]
\end{align*}
with $I_{\calA}(A^\bullet)\in I_{\calA}(\calD(\calA))$ and $I_{\calB}(B^\bullet)\in I_{\calB}(\calD(\calB))$. This means that every object of $\calD(\calA\times_M\calB)$ lies in the triangulated subcategory generated by $I_{\calA}(\calD(\calA))$ and $I_{\calB}(\calD(\calB))$.
\end{proof}

\subsection{Torsion pair reconstruction}\label{torsionpair}
In this section we associate to a torsion pair $(\calT,\calF)$ in an abelian category $\calC$ a bimodule
\begin{align*}
M \colon \calF^{\opp}\times\calT\longrightarrow \mathbf{Ab},
\end{align*}
and show that, under suitable hypotheses, the glued category $\calF\times_M\calT$ is equivalent to $\calC$.

\begin{definition}
Let $\calA$ be an abelian category. A \textit{torsion pair} in $\calA$ is a pair
$(\calT,\calF)$ of full subcategories satisfying:
\begin{enumerate}
    \item $\Hom_{\calA}(T,F)=0$ for every $T\in\calT$ and $F\in\calF$;
    \item for every object $E\in\calA$, there exists a short exact sequence
    \begin{align*}
    0 \longrightarrow T \longrightarrow E \longrightarrow F \longrightarrow 0,
    \end{align*}
    where $T\in\calT$ and $F\in\calF$.
\end{enumerate}
\end{definition}

The short exact sequence in the definition is unique up to unique isomorphism. Thus, for every object $E\in\calC$, we write
\begin{align*}
0\longrightarrow T_E\longrightarrow E\longrightarrow F_E\longrightarrow 0
\end{align*}
for the associated short exact sequence, with $T_E\in\calT$ and $F_E\in\calF$. In particular, we have a bimodule
\begin{align*}
    M\coloneqq \Ext^1(-,-) : \calF^{\opp} \times \calT \rightarrow \mathbf{Ab}
\end{align*}
as in \cite[\href{https://stacks.math.columbia.edu/tag/010I}{Tag 010I}]{stacks-project}. This is a special case of the construction of the previous section, with $R=\mathbb Z$.

In \cite[Appendix A]{cluster}, Ringel proves a partial reconstruction result for the matrix category associated with a torsion pair and the $\Ext^1(-,-)$ bimodule and traces results of this type back to the Kiev school of Nazarova, Roiter, and Drozd.

More specifically, given a torsion pair $(\calT,\calF)$ in an abelian category $\calC$, there exists a full and essentially surjective functor of categories 
\begin{align*}
    \Phi\colon\calC \rightarrow \calF \times_M \calT
\end{align*}
with kernel being the ideal generated by all maps $\calF \rightarrow \calT$. Because of this result, we impose the condition that $\Hom_{\calC}(\calF,\calT)=0$, as this is suited in the setting of semiorthogonal decompositions originating from gluing, and prove that we have an equivalence of abelian categories $\calC\simeq\calF\times_M\calT$. 

\begin{proposition}\label{torsionpairbimodule}
Let $\calC$ be an abelian category and let $(\calT,\calF)$ be a torsion pair in $\calC$ such that $\Hom_{\calC}(\calF,\calT)=0$. Then there is an equivalence of abelian categories
\begin{align*}
    \calC\simeq\calF\times_M\calT.
\end{align*}
\end{proposition}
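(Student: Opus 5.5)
We construct an explicit functor $\Phi\colon\calC\to\calF\times_M\calT$ and show that it is fully faithful and essentially surjective. Note first that $\Ext^1_{\calC}(-,-)$ (Yoneda Ext) restricted to $\calF^\opp\times\calT$ satisfies Assumption \ref{exaxtas}. Indeed, $\Hom(\calF,\calT)=0$, so the long exact Ext sequences give left exactness. For $0\to T'\to T\to T''\to 0$ in $\calT$ we get $0=\Hom(F,T'')\to\Ext^1(F,T')\to\Ext^1(F,T)\to\Ext^1(F,T'')$. For the $\calF$-variable we need short exact sequences in $\calF$; exactness in $\calF$ is inherited from $\calC$ in the sense relevant here. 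If one prefers, it is enough that Theorem \ref{isabelian} applies whenever $\calF,\calT$ are abelian. Alternatively, $\calF\times_M\calT$ is abelian by transport once the equivalence is established, so we do not rely on abelianness a priori; we prove an equivalence of additive categories and deduce exactness afterwards.

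\textbf{Definition of $\Phi$.} For $E\in\calC$, take the torsion sequence $0\to T_E\to E\to F_E\to 0$ and set $\Phi(E)=(F_E,T_E,[E])$, where $[E]\in\Ext^1(F_E,T_E)$ is the class of this extension. A morphism $h\colon E\to E'$ induces $h_T\colon T_E\to T_{E'}$ and $h_F\colon F_E\to F_{E'}$ by functoriality of the torsion sequence. The standard compatibility of Yoneda classes with morphisms of extensions gives $(h_T)_*[E]=h_F^*[E']$ in $\Ext^1(F_E,T_{E'})$, so $(h_F,h_T)$ is a morphism in $\calF\times_M\calT$.

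\textbf{Essential surjectivity.} Given $(F,T,\mu)$, choose an extension $0\to T\to E\to F\to 0$ representing $\mu$. By uniqueness of the torsion sequence this is the torsion sequence of $E$, so $\Phi(E)\cong(F,T,\mu)$.

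\textbf{Full faithfulness.} This is the key step. For fullness, take a morphism $(f,g)\colon\Phi(E)\to\Phi(E')$ with $g_*[E]=f^*[E']$. The standard lemma on Yoneda extensions says that equality of these classes in $\Ext^1(F_E,T_{E'})$ yields a morphism of short exact sequences $E\to E'$ restricting to $g$ and inducing $f$. Concretely, both the pushout of $E$ along $g$ and the pullback of $E'$ along $f$ represent the same class, hence are isomorphic as extensions, and composing gives the desired $h$. For faithfulness, suppose $h\colon E\to E'$ induces $h_T=0$ and $h_F=0$. Then $h$ vanishes on $T_E$ and so factors through $F_E$. Its image lies in the kernel of $E'\to F_{E'}$, namely $T_{E'}$. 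Thus $h$ factors as $E\to F_E\to T_{E'}\to E'$, and the middle map is zero since $\Hom(\calF,\calT)=0$. This is exactly where the hypothesis kills Ringel's kernel ideal. Additivity of $\Phi$ is clear.

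Hence $\Phi$ is an equivalence of additive categories. Since any additive equivalence between abelian categories is exact, and $\calC$ is abelian, $\calF\times_M\calT$ is abelian with this structure, which gives the claimed equivalence of abelian categories. The main obstacle I expect is the careful bookkeeping in fullness: verifying that the morphism built from the pushout and pullback identification really has components $g$ and $f$. Standard Yoneda Ext functoriality handles this.
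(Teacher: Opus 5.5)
Your proof is correct and follows the same route as the paper: it uses the functor $\Phi(E)=(F_E,T_E,\xi_E)$ built from the torsion sequence and its Yoneda class, and it uses $\Hom_{\calC}(\calF,\calT)=0$ exactly to make $\Phi$ faithful, by killing maps that factor through some $F_E\to T_{E'}$. The only difference is that you prove fullness (via the pushout/pullback comparison), faithfulness and essential surjectivity directly, where the paper cites Ringel's Appendix~A, and you get abelianness of $\calF\times_M\calT$ by transport of structure along $\Phi$ rather than through Assumption~\ref{exaxtas} and Theorem~\ref{isabelian}, so your hedging about left exactness in the $\calF$-variable is unnecessary.
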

\begin{proof}
We note that since $\Hom_{\calC}(\calF, \calT)=0$, both categories $\calT,\calF$ are abelian. We will first prove that $M$ satisfies Assumption \ref{exaxtas}. Let $F\in\calF$ and let
\begin{align*}
0\to T_1\to T_2\to T_3 \rightarrow 0 
\end{align*}
be an exact sequence in $\calT$, thus also an exact sequence in $\calC$. We apply $\Hom_{\calC}(F,-)$ and consider a part of the long exact sequence
\begin{align*}
    \Hom_{\calC}(F,T_3)\longrightarrow
    \Ext^1_{\calC}(F,T_1)\longrightarrow
    \Ext^1_{\calC}(F,T_2)\longrightarrow
    \Ext^1_{\calC}(F,T_3).
\end{align*}
By the assumption $\Hom_{\calC}(\calF,\calT)=0$, we have $\Hom_{\calC}(F,T_3)=0$. This implies
\begin{align*}
    0\longrightarrow M(F,T_1)\longrightarrow M(F,T_2)\longrightarrow M(F,T_3)
\end{align*}
is exact and as a result $M(F,-)$ is left exact. Similarly, let $T\in\calT$ and let $0 \rightarrow F_1\to F_2\to F_3\to0$ be an exact sequence in $\calF$. After applying $\Hom_{\calC}(-,T)$ we consider the following part of the long exact sequence
\begin{align*}
    \Hom_{\calC}(F_1,T)\longrightarrow
    \Ext^1_{\calC}(F_3,T)\longrightarrow
    \Ext^1_{\calC}(F_2,T)\longrightarrow
    \Ext^1_{\calC}(F_1,T).
\end{align*}
Once again by assumption $\Hom_{\calC}(F_1,T)=0$, and the sequence
\begin{align*}
    0\longrightarrow M(F_3,T)\longrightarrow M(F_2,T)\longrightarrow M(F_1,T)
\end{align*}
is exact. Thus, $M(-,T) \colon \calF^{\opp}\to\mathbf{Ab}$ is left exact.

We now construct the functor
\begin{align*}
    \Phi\colon\calC\longrightarrow \calF\times_M\calT.
\end{align*}
Since $(\calT,\calF)$ is a torsion pair, for every $E\in\calC$ we choose a short exact sequence
\begin{align*}
    0\longrightarrow T_E\longrightarrow E\longrightarrow F_E\longrightarrow 0
\end{align*}
with $T_E\in\calT$ and $F_E\in\calF$. This sequence is unique up to unique isomorphism. This short exact sequence is an extension of $F_E$ by $T_E$, and therefore determines a class
\begin{align*}
    \xi_E\in\Ext^1_{\calC}(F_E,T_E)=M(F_E,T_E).
\end{align*}
We define $\Phi$ on objects by
\begin{align*}
    \Phi(E) \coloneqq (F_E,T_E,\xi_E).
\end{align*}
Let $h \colon E\to E'$ be a morphism in $\calC$. We define the morphism
\begin{align*}
    \Phi(h) \colon \Phi(E)=(F_E,T_E,\xi_E)\longrightarrow\Phi(E')=(F_{E'},T_{E'},\xi_{E'})
\end{align*}
as follows. As in \cite[Appendix A]{cluster}, there exist unique morphisms $h_F \colon F_E\to F_{E'}$ and $h_T \colon T_E\to T_{E'}$ such that the following diagram commutes
\begin{center}
\begin{tikzcd}[column sep=large, row sep=large]
0 \arrow[r] & T_E \arrow[r,"j_E"] \arrow[d,"h_T"'] & E \arrow[r,"p_E"] \arrow[d,"h"] & F_E \arrow[r] \arrow[d,"h_F"] & 0 \\
0 \arrow[r] & T_{E'} \arrow[r,"j_{E'}"'] & E' \arrow[r,"p_{E'}"'] & F_{E'} \arrow[r] & 0.
\end{tikzcd}
\end{center}
In particular, this implies the following identification in $\Ext^1_{\calC}(F_E,T_{E'})$:
\begin{align*}
    M(\operatorname{id}_{F_E},h_T)(\xi_E) = M(h_F,\operatorname{id}_{T_{E'}})(\xi_{E'}).
\end{align*}
It follows that $(h_F,h_T)$ is a morphism in $\calF\times_M\calT$, and we define
\begin{align*}
    \Phi(h) \coloneqq (h_F,h_T).
\end{align*}
The construction is easily checked to be compatible with identities and compositions. Thus, $\Phi$ is a functor. 

The fact that $\Phi$ is an equivalence of abelian categories then follows immediately from \cite[Appendix A]{cluster}. In particular, the condition $\Hom_{\calC}(\calF,\calT)=0$ forces the kernel of $\Phi$ to be trivial.

\end{proof}

\subsection{Bimodule description of glued hearts}\label{bimoduledescriptions}

In this section, we relate the abelian gluing construction to the gluing of $t$-structures on triangulated categories following \cite{gluing1}.  We identify the heart of a glued $t$-structure with the corresponding glued abelian category obtained by restricting the gluing bimodule to the hearts. We refer the reader to \cite{sodref} for the required background on a semiorthogonal decompositions.

On triangulated categories admitting a semiorthogonal decomposition, one way to construct t-structures proceeds via the gluing procedure as in \cite{gluing1}. We work in the following setup. Let $\mathcal D=\langle \calA,\calB\rangle$ be a triangulated category with semiorthogonal decomposition, where $\calA$ and $\calB$ are full triangulated 
subcategories of $\mathcal D$. The canonical embeddings of $\calA$ and $\calB$ into $\mathcal D$ admit a left adjoint and a right adjoint, respectively, which we denote by
\begin{align*}
i_{\calA}^* \colon \mathcal D \rightarrow \calA, \qquad i_{\calB}^! \colon \mathcal D \rightarrow \calB.
\end{align*}

\begin{proposition} (\cite[Lemma 2.1]{gluing1}) \label{gluedheart}
Let $\calA$ and $\calB$ be equipped with $t$-structures with the respective hearts $\calA_0$ and $\calB_0$. We assume that 
\begin{align*}
    \Hom_{\calD}^{\leq 0}(\calA_0,\calB_0)=0.
\end{align*}
Then the extension closure of $\calA_0$ and $\calB_0$
\begin{align*}
   \langle\calA_0,\calB_0 \rangle = \left\{ E \in \calD \,\middle|\, i_{\calA}^*(E) \in \calA_0,\; i_{\calB}^!(E) \in \calB_0 \right\}
\end{align*}
is the heart of a $t$-structure on $\calD$. Moreover, the functors $i_{\calA}^*$ and $i_{\calB}^!$ are $t$-exact with respect to this $t$-structure and the pair $(\calB_0, \calA_0)$ forms a torsion pair in $  \langle\calA_0,\calB_0 \rangle$.
\end{proposition}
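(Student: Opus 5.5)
Let $\calH$ denote the set on the right-hand side, i.e.\ the objects $E$ with $i_{\calA}^*E\in\calA_0$ and $i_{\calB}^!E\in\calB_0$. The plan is to write down the glued aisle and coaisle explicitly and check the $t$-structure axioms from the semiorthogonal decomposition triangles. Set
\begin{align*}
\calD^{\leq 0}&=\{E\in\calD \mid i_{\calA}^*E\in\calA^{\leq 0},\ i_{\calB}^!E\in\calB^{\leq 0}\},\\
\calD^{\geq 0}&=\{E\in\calD \mid i_{\calA}^*E\in\calA^{\geq 0},\ i_{\calB}^!E\in\calB^{\geq 0}\}.
\end{align*}
For every $E$ we use the canonical triangle $i_{\calB}^!E\to E\to i_{\calA}^*E\to$. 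Stability under $[1]$ in the appropriate directions is immediate.

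For the orthogonality $\Hom(\calD^{\leq 0},\calD^{\geq 1})=0$, take $X\in\calD^{\leq 0}$ and $Y\in\calD^{\geq 1}$ and apply the two triangles. This reduces the claim to four Hom groups:
\begin{itemize}
\item $\Hom(i_{\calA}^*X,i_{\calA}^*Y)$ and $\Hom(i_{\calB}^!X,i_{\calB}^!Y)$ vanish by the component $t$-structures;
\item $\Hom(i_{\calB}^!X,i_{\calA}^*Y)$ vanishes by semiorthogonality $\Hom(\calB,\calA)=0$;
\item $\Hom(i_{\calA}^*X,i_{\calB}^!Y)$ vanishes by the hypothesis, which I first upgrade to $\Hom(\calA^{\leq 0},\calB^{\geq 1})=0$.
\end{itemize}
The upgrade goes by induction on cohomological amplitude. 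This requires bounded $t$-structures, or some devissage or limit argument. For bounded objects, filtering by cohomology objects $A_0[p]$ and $B_0[-q]$ with $p\ge 0$ and $q\ge 1$, it reduces to $\Hom(A_0,B_0[-p-q])=0$, which is exactly $\Hom^{\leq 0}(\calA_0,\calB_0)=0$. I expect this to be the main subtlety if the $t$-structures are not assumed bounded. In that case I would instead verify the conditions directly via truncations.

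For the truncation triangle of $E$, first take $\tau^{\ge1}_{\calA}$ of $i_{\calA}^*E$ and form the cone so that $E'\to E\to\tau^{\geq 1}i_{\calA}^*E$. Then $i_{\calA}^*E'=\tau^{\leq 0}i_{\calA}^*E$, and $i_{\calB}^!E'$ is unchanged, since $i_{\calB}^!$ kills $\calA$. Next truncate $i_{\calB}^!E'$ in $\calB$: let $E''$ be the cone of $\tau^{\geq 1}_{\calB}i_{\calB}^!E'\to E'$ shifted appropriately, i.e.\ the fibre of $E'\to(\tau^{\ge1}i_\calB^!E')$. This map does not exist directly, since $\calB$ maps to $E'$ and not from it. So I instead use the standard Beilinson–Bernstein–Deligne recollement-style construction: $\calD^{\leq0}$ is the extension closure of $\calA^{\le0}$ and $\calB^{\le0}$, and truncation is produced by the octahedral axiom together with the vanishing $\Hom(\calA^{\le0},\calB^{\ge1})=0$. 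This is the standard proof in \cite[Lemma 2.1]{gluing1}, and I would follow it.

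The heart is then $\calD^{\le0}\cap\calD^{\ge0}=\calH$. The triangle $i_{\calB}^!E\to E\to i_{\calA}^*E$ exhibits every $E\in\calH$ as an extension of $\calA_0$ by $\calB_0$. Conversely, an extension closure of such objects lies in $\calH$, since $\calH$ is extension closed as a heart. Hence $\calH=\langle\calA_0,\calB_0\rangle$. $t$-exactness of $i_{\calA}^*$ and $i_{\calB}^!$ holds by the definition of the aisles. Finally, $(\calB_0,\calA_0)$ is a torsion pair in $\calH$. Indeed, $\Hom(\calB_0,\calA_0)=0$ by semiorthogonality, and the same triangle is a short exact sequence in $\calH$ with torsion part in $\calB_0$ and free part in $\calA_0$.
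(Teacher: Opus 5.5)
The paper gives no proof of this statement; it quotes \cite[Lemma 2.1]{gluing1}. So your proposal has to stand on its own. Several parts are fine: your reduction of orthogonality to four Hom groups, the equality $\calD^{\le0}\cap\calD^{\ge0}=\langle\calA_0,\calB_0\rangle$, $t$-exactness, and the torsion pair.

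\textbf{The gap.} What is missing is the existence of truncation triangles, which is the real content of the lemma. You abandon your own construction and assert that truncation follows from the octahedral axiom together with $\Hom(\calA^{\le0},\calB^{\ge1})=0$. That vanishing is not enough. Your d\'evissage only uses $\Hom(A_0,B_0[-p-q])$ with $p+q\geq1$, i.e.\ $\Hom^{\le -1}(\calA_0,\calB_0)=0$. The degree-zero part $\Hom(\calA_0,\calB_0)=0$ of the hypothesis never enters your argument, yet the statement is false without it.

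\textbf{Counterexample.} Take an exceptional pair with $\Hom^\bullet(Q,P)=0$ and $\Hom^\bullet(P,Q)=k$ in degree $0$, for instance the two indecomposable projectives of the $A_2$ path algebra. Set $\calA=\langle P\rangle$ and $\calB=\langle Q\rangle$ with the standard $t$-structures. Choose $0\neq f\colon P\to Q$ and let $X$ have decomposition triangle $Q[-1]\to X\to P\xrightarrow{f}Q$. Suppose your $(\calD^{\le0},\calD^{\ge0})$ were a $t$-structure. Applying $i_{\calA}^*$ and $i_{\calB}^!$ to the truncation triangle of $X$ gives $\tau^{\le0}X\cong P$ and $\tau^{\ge1}X\cong Q[-1]$. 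Since $\Hom(Q[-1],P[1])=0$, this forces $X\cong P\oplus Q[-1]$. Uniqueness of the decomposition triangle then forces $f=0$, a contradiction.

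\textbf{The repair.} Your first attempt works and does not actually get stuck. Write $A=i_{\calA}^*E$ and $B=i_{\calB}^!E$, and take $E'$ as you built it. The octahedral axiom gives its decomposition triangle $B\to E'\to\tau^{\le0}A\to B[1]$.

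\begin{itemize}
\item Apply $\Hom(-,\tau^{\ge1}B)$. The obstruction to extending the truncation map $B\to\tau^{\ge1}B$ to some $g\colon E'\to\tau^{\ge1}B$ lies in $\Hom(\tau^{\le0}A,(\tau^{\ge1}B)[1])\subseteq\Hom(\calA^{\le0},\calB^{\ge0})$.
\item For bounded $t$-structures, the same d\'evissage, now with $p,q\geq0$, shows that $\Hom(\calA^{\le0},\calB^{\ge0})=0$ is equivalent to the hypothesis. This is exactly where $\Hom^0(\calA_0,\calB_0)=0$ is used.
\item Let $E''$ be the fibre of $g$. The map $i_{\calB}^!(g)$ is $g$ precomposed with the counit $B\to E'$, so it is the truncation map. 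Hence $i_{\calB}^!E''\cong\tau^{\le0}B$ and $i_{\calA}^*E''\cong\tau^{\le0}A$, so $E''\in\calD^{\le0}$.
\item The octahedral axiom for $E''\to E'\to E$ gives a triangle $\tau^{\ge1}B\to\Cone(E''\to E)\to\tau^{\ge1}A$. Therefore the cone lies in $\calD^{\ge1}$.
\end{itemize}

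\textbf{Boundedness.} Your plan of ``verifying directly via truncations'' cannot replace boundedness. A $t$-structure on $\calA$ with zero heart makes the hypothesis vacuous, while $\Hom(\calA^{\le0},\calB^{\ge0})$ can still be nonzero; in the example above this holds via $f$. By the same argument, no $t$-structure with your aisles exists in that case. The condition the construction really needs is $\Hom(\calA^{\le0},\calB^{\ge0})=0$.
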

We will use the notation $\gl(\calA_0,\calB_0)$ instead of $\langle\calA_0,\calB_0\rangle$ in order to discuss the glued heart associated with this decomposition. 

\begin{remark} \label{boundedfunctions}
In the same context as in Proposition \ref{gluedheart}, we note 
  that if the t-structures on $\calA$ and $\calB$ are bounded, then the glued t-structure on $\calD$ is bounded as well. In addition, let $Z_{\calA}, Z_{\calB}$ be stability functions on $\calA_0,\calB_0$, respectively. Then 
\begin{align*}
  &Z_{\gl} \colon K_0(\glue) \rightarrow \C  \\
  &E \mapsto Z_{\calA}(i_{\calA}^*(E))+ Z_{\calB}(i_{\calB}^!(E))
\end{align*}
is a stability function on $\glue$.  We will use the notation $Z_{\calA}+ Z_{\calB}$ for the glued central charge.  
\end{remark}

\begin{corollary} \label{relatinghearts}
The glued heart $\glue$ is equivalent to the category $\calA_0\times_{M_0}\calB_0$ obtained by gluing $\calA_0$ and $\calB_0$ with the bimodule
\begin{align*}
    M_0 \colon \calA_0^{\opp}\times\calB_0\longrightarrow\mathbf{Ab},\qquad  M_0(A,B)=\Ext^1_{\glue}(A,B).
\end{align*}
\end{corollary}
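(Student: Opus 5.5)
The plan is to deduce this directly from Proposition \ref{torsionpairbimodule}, applied to $\calC=\glue$ with the torsion pair $(\calT,\calF)=(\calB_0,\calA_0)$. Proposition \ref{gluedheart} already tells us that $\glue$ is abelian, being the heart of a $t$-structure, and that $(\calB_0,\calA_0)$ is a torsion pair in it. Concretely, for $E\in\glue$ the semiorthogonal decomposition triangle
\begin{align*}
i_{\calB}i_{\calB}^!E\to E\to i_{\calA}i_{\calA}^*E\to i_{\calB}i_{\calB}^!E[1]
\end{align*}
has both outer terms in the heart by $t$-exactness. It is therefore a short exact sequence in $\glue$ with torsion part in $\calB_0$ and torsion-free part in $\calA_0$. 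The vanishing $\Hom(\calB_0,\calA_0)=0$ is immediate from semiorthogonality.

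The only remaining hypothesis of Proposition \ref{torsionpairbimodule} is $\Hom_{\glue}(\calF,\calT)=\Hom_{\glue}(\calA_0,\calB_0)=0$. Since $\glue$ is a full subcategory of $\calD$, this group equals $\Hom_{\calD}(\calA_0,\calB_0)$. That group is the degree-$0$ part of the gluing assumption $\Hom^{\le 0}_{\calD}(\calA_0,\calB_0)=0$, so it vanishes.

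Proposition \ref{torsionpairbimodule} then gives an equivalence $\glue\simeq\calA_0\times_{M}\calB_0$ with $M(A,B)=\Ext^1_{\glue}(A,B)$. This is exactly $M_0$, so no further identification is needed. For later comparison with $\Hom^1_{\calD}(A,B)$, we can also recall that for a heart of a bounded $t$-structure, $\Ext^1$ in the heart agrees with $\Hom_{\calD}(A,B[1])$. This follows from the standard fact that extensions in a heart correspond to triangles in $\calD$.

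I do not expect a genuine obstacle. The one point to check carefully is the orientation convention: $\calF=\calA_0$ plays the role of the first, contravariant variable and $\calT=\calB_0$ the second. The order in $\calF\times_M\calT$ must match $\calA_0\times_{M_0}\calB_0$, and the triple $(A,B,\mu)$ must record $A=i_{\calA}^*E$ and $B=i_{\calB}^!E$ together with the extension class of the triangle above.
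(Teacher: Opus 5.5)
Your proposal is correct and takes the same route as the paper, which likewise applies Proposition \ref{torsionpairbimodule} to the torsion pair $(\calB_0,\calA_0)$ in $\glue$ supplied by Proposition \ref{gluedheart}. Your explicit check that $\Hom_{\glue}(\calA_0,\calB_0)=0$ follows from the degree-zero part of the gluing condition supplies a hypothesis the paper leaves implicit. Your side remark that $\Ext^1_{\glue}(A,B)\cong\Hom_{\calD}(A,B[1])$ holds for the heart of any $t$-structure, so boundedness is not needed there.
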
 
    
\begin{proof}
    The pair $(\calB_0,\calA_0)$ is a torsion pair in $\glue$ such that both $\calA_0$ and $\calB_0$ are abelian. Then the proof follows from Proposition \ref{torsionpairbimodule}.
\end{proof}
In particular, for all $A\in\calA_0$ and $B\in\calB_0$, there is a natural isomorphism
\begin{align*}
    \Ext^1_{\glue}(A,B)\cong \Hom_{\calD}(A,B[1]).
\end{align*}
Thus the bimodule $M_0$ is the restriction to $\calA_0^{\opp}\times\calB_0$ of the usual gluing bifunctor of the semiorthogonal decomposition. We recall this definition following \cite[\S 2.2]{Kuznetsov_2014}.

\begin{definition}
For a semiorthogonal decomposition $\calD=\langle\calA,\calB\rangle$,
the \textit{gluing bifunctor} is
\begin{align*}
\Phi \colon \calA^{\opp} \times \calB \to \modd R, \qquad (A, B) \mapsto \Hom_{\calD}(A,B[1]).
\end{align*}
If $\Phi(-, B)$ is representable for all $B \in \calB$, there exists a
functor $\phi \colon \calB \to \calA$ with a functorial isomorphism
\begin{align*}
\Phi(A, B) \simeq \Hom_{\calA}(A, \phi(B)).
\end{align*}
The functor $\phi$ is called the \textit{gluing functor} of the
semiorthogonal decomposition.
\end{definition}

\begin{lemma}(\cite[Lemma 2.5]{Kuznetsov_2014})\label{objecttripleSOD}
Let $\calD=\langle \calA,\calB\rangle$ be a semiorthogonal decomposition with gluing functor $\phi \colon \calB\to\calA$. Then an object of $\calD$ can be described by a triple $(A,B,\varphi)$, where $A\in\calA$, $B\in\calB$, and
$\varphi \colon A\rightarrow \phi(B)$
is a morphism in $\calA$. In particular, if such an object lies in the glued heart $\glue$, then its components satisfy $A\in\calA_0$ and $B\in\calB_0$.
\end{lemma}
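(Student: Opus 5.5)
The plan follows Kuznetsov's argument: build the triangle attached to an object of $\calD$, then reduce the description of morphisms $i_{\calB}^!(E)[1]\to\ldots$ to maps into $\phi(B)$.

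\emph{Step 1: the triangle.} For $E\in\calD$, the semiorthogonal decomposition $\calD=\langle\calA,\calB\rangle$ gives a distinguished triangle
\begin{align*}
i_{\calB}i_{\calB}^!(E)\longrightarrow E\longrightarrow i_{\calA}i_{\calA}^*(E)\xrightarrow{\ \delta\ } i_{\calB}i_{\calB}^!(E)[1].
\end{align*}
It is functorial in $E$, with the components unique up to unique isomorphism. Set $A\coloneqq i_{\calA}^*(E)$ and $B\coloneqq i_{\calB}^!(E)$. Then the connecting morphism is an element
\begin{align*}
\delta\in\Hom_{\calD}(A,B[1])=\Phi(A,B).
\end{align*}

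\emph{Step 2: passing to $\phi$.} Using the functorial isomorphism $\Phi(A,B)\simeq\Hom_{\calA}(A,\phi(B))$, the element $\delta$ corresponds to a morphism $\varphi\colon A\to\phi(B)$. This attaches the triple $(A,B,\varphi)$ to $E$.

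\emph{Step 3: recovering $E$.} Conversely, a triple $(A,B,\varphi)$ determines $\delta$ via the same isomorphism, and we take $E\coloneqq\Cone(\delta)[-1]$. Applying $i_{\calA}^*$ and $i_{\calB}^!$ to the triangle $E\to A\xrightarrow{\delta}B[1]$ recovers $A$ and $B$. This uses $i_{\calA}^*B=0$ and $i_{\calB}^!A=0$, which both follow from $\Hom(\calB,\calA)=0$, together with the fact that the decomposition triangle is unique. So $E$ is determined up to isomorphism by the triple. The isomorphism is non-canonical, because cones are. This is exactly the level of precision of "can be described by a triple", so the non-functoriality of cones is the only subtle point. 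I would not claim an equivalence of categories, only a bijection on isomorphism classes, and functoriality up to the usual cone ambiguity.

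\emph{Step 4: the glued heart.} If $E\in\glue$, then Proposition~\ref{gluedheart} gives directly $i_{\calA}^*(E)\in\calA_0$ and $i_{\calB}^!(E)\in\calB_0$, so $A\in\calA_0$ and $B\in\calB_0$. In this case $\delta\in\Hom_{\calD}(A,B[1])\cong\Ext^1_{\glue}(A,B)=M_0(A,B)$ is precisely the extension class $\xi_E$ of Corollary~\ref{relatinghearts}. Hence the triple matches the description of the heart as $\calA_0\times_{M_0}\calB_0$.
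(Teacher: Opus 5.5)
Your proposal is correct and matches the argument the paper relies on. The paper gives no proof of its own: it cites \cite[Lemma 2.5]{Kuznetsov_2014}, which is the decomposition triangle $i_{\calB}i_{\calB}^!E\to E\to i_{\calA}i_{\calA}^*E\to i_{\calB}i_{\calB}^!E[1]$ with its connecting map transported through $\Phi(A,B)\simeq\Hom_{\calA}(A,\phi(B))$, and for the heart statement it uses the description $\glue=\{E \mid i_{\calA}^*E\in\calA_0,\ i_{\calB}^!E\in\calB_0\}$ from Proposition~\ref{gluedheart}, exactly as in your Step 4. You are also right to read ``can be described by a triple'' as a bijection on isomorphism classes rather than an equivalence of categories.
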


 We denote by 
$i_{\calA} \colon \calA\hookrightarrow\calD$ and 
$i_{\calB} \colon \calB\hookrightarrow\calD$ 
the inclusion functors of the two components. If $i_{\calA}$ admits a right adjoint 
$i_{\calA}^!$, then the gluing functor is given by $
    \phi = i_{\calA}^! \circ i_{\calB}[1]$. In the following corollary, we assume the existence of the gluing functor $\phi$.

\begin{corollary} \label{left1}
Let $\mathcal{D}=\langle \mathcal{A},\mathcal{B}\rangle$ be a semiorthogonal decomposition. 
Assume $\mathcal{A}$ and $\mathcal{B}$ carry bounded $t$-structures which glue to a $t$-structure on $\mathcal{D}$.
Then the gluing functor $\phi \colon \mathcal{B}\to \mathcal{A}$ is left $t$-exact.
\end{corollary}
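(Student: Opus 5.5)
We need to show that $\phi$ is left $t$-exact, i.e.\ $\phi(\calB^{\geq 0})\subset\calA^{\geq 0}$. Since $\phi = i_{\calA}^!\circ i_{\calB}[1]$ and $\Phi(A,B)=\Hom_{\calD}(A,B[1])\simeq\Hom_{\calA}(A,\phi(B))$, the plan is to test $\phi(B)$ against the aisle of $\calA$. For a $t$-structure, an object $X\in\calA$ lies in $\calA^{\geq 0}$ if and only if $\Hom_{\calA}(A',X)=0$ for all $A'\in\calA^{\leq -1}$. So it suffices to prove $\Hom_{\calD}(A',B[1])=0$ for all $A'\in\calA^{\leq -1}$ and $B\in\calB^{\geq 0}$.

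To prove this vanishing, we reduce to hearts using boundedness. Every $A'\in\calA^{\leq -1}$ is a finite iterated extension of objects $A_0[p]$ with $A_0\in\calA_0$ and $p\geq 1$. Likewise every $B\in\calB^{\geq 0}$ is a finite iterated extension of objects $B_0[-q]$ with $B_0\in\calB_0$ and $q\geq 0$. Both descriptions come from the truncation triangles of the bounded $t$-structures.

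Since $\Hom_{\calD}(-,-)$ is cohomological in each variable, vanishing on the building blocks implies vanishing on the extensions, by induction on the length of the Postnikov towers using the long exact sequences. On the building blocks we compute
\begin{align*}
\Hom_{\calD}(A_0[p],B_0[-q][1])=\Hom_{\calD}(A_0,B_0[1-p-q]).
\end{align*}
Here $1-p-q\leq 0$, so this group vanishes by the gluing hypothesis $\Hom^{\leq 0}_{\calD}(\calA_0,\calB_0)=0$.

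Combining the steps, $\Hom_{\calA}(A',\phi(B))=0$ for all $A'\in\calA^{\leq -1}$, and therefore $\phi(B)\in\calA^{\geq 0}$, as required.

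The only point needing care is the orthogonality characterization $\calA^{\geq 0}=(\calA^{\leq -1})^{\perp}$. It is standard for any $t$-structure: it follows from the truncation triangle $\tau_{\leq -1}X\to X\to\tau_{\geq 0}X$. Apart from that, boundedness is used only to write objects as finite extensions of shifted heart objects, so no real obstacle is expected.
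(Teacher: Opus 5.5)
Your proposal is correct and follows essentially the same argument as the paper. Both proofs use the adjunction isomorphism $\Hom_{\calD}(A,B[1])\cong\Hom_{\calA}(A,\phi(B))$ to reduce to the vanishing for $A\in\calA^{\leq -1}$ and $B\in\calB^{\geq 0}$, then use boundedness to reduce to shifted heart objects, where the gluing condition $\Hom^{\leq 0}_{\calD}(\calA_0,\calB_0)=0$ applies. You are slightly more explicit than the paper in justifying the d\'evissage and the characterization $\calA^{\geq 0}=(\calA^{\leq -1})^{\perp}$.
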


\begin{proof}
We write $(\mathcal A^{\leq 0},\mathcal A^{\geq 0})$ and
$(\mathcal B^{\leq 0},\mathcal B^{\geq 0})$ for the component
$t$-structures, and $(\mathcal D^{\leq 0},\mathcal D^{\geq 0})$ for the
glued one. For a fixed $B\in\mathcal B^{\geq 0}$, we will show that
$\phi(B)\in\mathcal A^{\geq 0}$. It is enough to prove that
$\Hom_{\mathcal A}(A,\phi(B))=0$ for every
$A\in\mathcal A^{\leq -1}$. By the defining property of the gluing
functor, we have functorial isomorphisms
\begin{align*}
    \Hom_{\mathcal D}(A,B[1]) \cong \Hom_{\mathcal A}(A,\phi(B))
\end{align*}
for all $A\in\mathcal A$ and $B\in\mathcal B$. Thus, it is enough to show
that $\Hom_{\mathcal D}(A,B[1])=0$ for every
$A\in\mathcal A^{\leq -1}$. Since both t-structures are bounded, $A,B$ are respectively built by extensions of shifts of objects in the hearts. Without loss of generality, we may assume that $A=A_0[p]$ and $B=B_0[q]$, where $A_0\in\mathcal A_0$,
$B_0\in\mathcal B_0$, $p\geq 1$, and $q\leq 0$. In this case
\begin{align*}
    \Hom_{\mathcal D}(A_0[p],B_0[q+1]) = \Hom_{\mathcal D}^{q+1-p}(A_0,B_0).
\end{align*}
Since $p\geq 1$ and $q\leq 0$, we have $q+1-p\leq 0$, so this group
vanishes by the gluing condition
$\Hom_{\mathcal D}^{\leq 0}(\mathcal A_0,\mathcal B_0)=0$. Therefore
$\Hom_{\mathcal A}(A,\phi(B))=0$ for every
$A\in\mathcal A^{\leq -1}$, and as a result $\phi(B)\in\mathcal A^{\geq 0}$.
\end{proof}

\subsection{Relation with gluing of dg-categories} \label{dgstuff}

In $\S \ref{abelianconstruction}-\S \ref{bimoduledescriptions}$ we described the gluing construction on the abelian level and then compared it with the heart obtained from gluing $t$-structures along a semiorthogonal decomposition. We now move to the dg level and explain how the same kind of gluing data is encoded by a dg-bimodule. This construction has already been developed in \cite{Kuznetsov_2014}.

Throughout this section we work over a commutative ring $R$. The dg-gluing construction of \cite[\S 4]{Kuznetsov_2014} is stated over a base field, in view of geometric applications. The arguments needed below concern only the formal construction of the glued dg-category and its relation with semiorthogonal decompositions. This part does not depend on the base ring being a field and works in the same way for $R$-linear dg-categories.

The purpose of this section is to recall the dg-gluing and then relate it to the abelian gluing construction and semiorthogonal decompositions discussed previously.

We follow the same construction as in \cite[\S 4.1]{Kuznetsov_2014}.

\begin{definition}
Let $\calA,\calB$ be $R$-linear dg-categories. An $\calA\!-\!\calB$ \textit{dg-bimodule} is a dg-functor
\begin{align*}
    M \colon \calA^{\opp} \otimes \calB \rightarrow C(R),
\end{align*}
where $C(R)$ denotes the dg-category of cochain complexes of $R$-modules.
\end{definition}
Given an $\calA\!-\!\calB$ dg-bimodule, we can construct the following glued dg-category.

\begin{definition}
    The glued dg-category $\calA \times_M \calB$ is defined as follows.
    \begin{enumerate}
        \item The objects are triples $(A,B,\phi)$ with $A \in \calA$, $B \in \calB$, and
        $\phi \in M^0(A,B)$ a closed degree-zero element.
       \item For objects $(A_i,B_i,\phi_i)$, the morphism cochain complex and the differential are defined by
\begin{align*}
    \Hom^\bullet\left((A_1,B_1,\phi_1),(A_2,B_2,\phi_2)\right)
    &= \Hom^\bullet_{\calA}(A_1,A_2)\oplus \Hom^\bullet_{\calB}(B_1,B_2) \oplus M^{\bullet-1}(A_1,B_2),\\
    d(a,b,c) &=\left(d(a),d(b),-d(c)-b\circ\phi_1+\phi_2\circ a\right).
\end{align*}
        \item For composable morphisms $(a_1,b_1,c_1)$ and $(a_2,b_2,c_2)$, composition is defined by
\begin{align*}
    (a_2,b_2,c_2)\circ(a_1,b_1,c_1) = \left(a_2\circ a_1,\; b_2\circ b_1,\; c_2\circ a_1+(-1)^{\deg(b_2)} b_2\circ c_1\right).
\end{align*}
        \item For every object $(A,B,\phi)$, the identity morphism is $ (\idd_A,\idd_B,0)$.
    \end{enumerate}
\end{definition}
Details regarding its dg-structure are found in \cite{details} in the language of matrix categories.

We note that, when both $\calA$ and $\calB$ are pre-triangulated dg-categories, then $\calA \times_M \calB$ is pre-triangulated as well, as seen in \cite[Lemma 4.3]{Kuznetsov_2014}. In addition, the glued dg-category $\calA \times_M \calB$ admits natural inclusion 
dg-functors $i_{\calA}  \colon  \calA \to \calA \times_M \calB$ and 
$i_{\calB} \colon \calB \to \calA \times_M \calB$, sending $A \mapsto (A,0,0)$ 
and $B \mapsto (0,B,0)$, respectively.

\begin{proposition}\cite[Corollary 4.5]{Kuznetsov_2014} \label{SOD}
    Let $\calA,\calB$ be pre-triangulated dg-categories. Then the dg-functors $ i_{\calA}, i_{\calB}$
    are fully faithful and induce a semiorthogonal decomposition on the homotopy category
    \begin{align*}
        [\calA\times_M\calB]=\langle [i_{\calA}]([\calA]), [i_{\calB}]([\calB]) \rangle,
    \end{align*}
    for which the gluing bifunctor is induced by the dg-bimodule $M$.
\end{proposition}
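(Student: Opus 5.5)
The statement is \cite[Corollary 4.5]{Kuznetsov_2014}. I would reprove it directly from the definition of the glued dg-category, in three steps: full faithfulness, semiorthogonality, and generation, followed by identifying the gluing bifunctor.

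\textbf{Full faithfulness.} For $A_1,A_2\in\calA$ the morphism complex between $(A_1,0,0)$ and $(A_2,0,0)$ is
\begin{align*}
\Hom^\bullet_{\calA}(A_1,A_2)\oplus 0\oplus M^{\bullet-1}(A_1,0)=\Hom^\bullet_{\calA}(A_1,A_2).
\end{align*}
The differential and composition restrict to those of $\calA$ because all $\phi$-terms vanish, so $i_{\calA}$ is fully faithful already at the dg level. The same computation works for $i_{\calB}$, using $M(0,B_2)=0$.

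\textbf{Semiorthogonality.} I would compute
\begin{align*}
\Hom^\bullet\bigl((0,B,0),(A,0,0)\bigr)=0\oplus 0\oplus M^{\bullet-1}(0,0)=0,
\end{align*}
so $\Hom_{[\calA\times_M\calB]}([i_\calB]B,[i_\calA]A)=0$ after passing to $H^0$.

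\textbf{Generation.} For an object $X=(A,B,\phi)$ I would exhibit a distinguished triangle
\begin{align*}
(0,B,0)\to X\to (A,0,0)\to (0,B,0)[1].
\end{align*}
The maps are $(0,\idd_B,0)$ and $(\idd_A,0,0)$; both are closed, because the $c$-component of the differential equals $-\idd_B\circ 0+\phi\circ 0=0$ for the first and $-0\circ\phi+0\circ\idd_A=0$ for the second.

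The step I expect to be the main obstacle is justifying that this sequence is actually exact in the homotopy category. I would show that $X$ is isomorphic to the cone of the closed degree-zero morphism $(0,0,\phi)\colon (A,0,0)[-1]\to (0,B,0)$, whose component lies in $M^{0}(A,B)$. For this I would use the pretriangulated structure: shifts and cones in $\calA\times_M\calB$ are formed componentwise via \cite[Lemma 4.3]{Kuznetsov_2014}. Under this, the cone of $(0,0,\phi)$ is $(A,B,\phi)$ up to sign conventions on $\phi$, and closedness of $\phi$ makes the identification compatible with the differential. I would check the sign convention explicitly against the differential $d(a,b,c)=(da,db,-dc-b\phi_1+\phi_2a)$. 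Since every object then sits in such a triangle, the two subcategories generate, and we get the semiorthogonal decomposition.

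\textbf{Gluing bifunctor.} Finally, I would show that
\begin{align*}
\Hom_{[\calA\times_M\calB]}\bigl((A,0,0),(0,B,0)[1]\bigr)=H^1\bigl(\Hom^\bullet((A,0,0),(0,B,0))\bigr).
\end{align*}
The relevant complex is $M^{\bullet-1}(A,B)$ with differential $-d$, so its $H^1$ is $H^0(M(A,B))$. Naturality in $A$ and $B$ follows from the composition formula, which acts on the $c$-component by $c\mapsto c\circ a$ and $c\mapsto \pm b\circ c$. This identifies the gluing bifunctor $\Hom_{\calD}(A,B[1])$ with the one induced by $M$.
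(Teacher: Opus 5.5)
Your proposal is correct and is essentially the standard argument behind \cite[Corollary 4.5]{Kuznetsov_2014}, which the paper cites rather than reproves. Full faithfulness and semiorthogonality are read off from the morphism complexes, each $(A,B,\phi)$ is the cone of the closed morphism $(0,0,\phi)\colon(A,0,0)[-1]\to(0,B,0)$, and $\Hom((A,0,0),(0,B,0)[1])\simeq H^0(M(A,B))$. This parallels the paper's abelian analogue (Proposition \ref{gluedderivedsod}) and the computation in Corollary \ref{heartshearts}.

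The sign ambiguity you flag is harmless, since $(\idd_A,-\idd_B,0)$ is a closed isomorphism $(A,B,\phi)\to(A,B,-\phi)$. Your argument also never uses that the base is a field, which is what justifies the paper's use of the result over $R$.
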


\begin{proposition}\cite[Proposition 4.10]{Kuznetsov_2014} \label{induced11}
Let $[\calD]= \langle \calA, \calB \rangle$ be a semiorthogonal decomposition of a pre-triangulated dg-category $\calD$. Then, $\calA$ and $\calB$ admit dg-enhancements as dg-subcategories of $\calD$ and 
the $\calA\!-\!\calB$ bimodule defined by 
\begin{align*}
    M \colon \calA^{\opp}\otimes \calB &\longrightarrow C(R), \\
    (A,B) &\longmapsto \Hom_{\calD}^{\bullet}(A,B)[1]
\end{align*}
defines the glued dg-category $\calA \times_M \calB$ which is quasiequivalent to $\calD$. 
\end{proposition}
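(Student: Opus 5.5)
The plan is to write down an explicit quasi-inverse pair of dg-functors, or at least one quasi-fully-faithful and quasi-essentially-surjective dg-functor
\begin{align*}
\Psi\colon \calA\times_M\calB\longrightarrow \calD,
\end{align*}
and then check that it induces an equivalence on homotopy categories compatible with the semiorthogonal decompositions. First, the enhancements of $\calA$ and $\calB$ are simply the full dg-subcategories of $\calD$ on objects whose images in $[\calD]$ lie in $\calA$ and $\calB$. With $M(A,B)=\Hom^\bullet_{\calD}(A,B)[1]$, a closed degree-zero element $\phi\in M^0(A,B)$ is a closed morphism $A\to B[1]$ of degree zero, that is, a closed degree-one morphism $A\to B$. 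To avoid shifts I may replace $\calD$ by its pretriangulated hull, or by the dg-category of twisted complexes / h-projective modules. Then I define
\begin{align*}
\Psi(A,B,\phi)\coloneqq \Cone\bigl(A[-1]\xrightarrow{\phi}B\bigr),
\end{align*}
realised concretely as the twisted complex $B\oplus A$ with differential twisted by $\phi$.

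On morphisms, $(a,b,c)$ in degree $n$ maps to the matrix $\begin{pmatrix} b & c\\ 0& a\end{pmatrix}$ between the cones. The decisive step is to check that the differential $d(a,b,c)=(da,db,-dc-b\phi_1+\phi_2a)$ and the composition law match the matrix differential and matrix multiplication of twisted complexes, up to the sign conventions (the term $(-1)^{\deg b_2}$). This is a direct, sign-chasing verification, and I expect the signs to be the only routine nuisance.

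Next I would show that $\Psi$ is quasi-fully faithful. The Hom complex between cones in $\calD$ is, as a graded module, $\Hom(A_1,A_2)\oplus\Hom(B_1,B_2)\oplus\Hom(A_1,B_2)[-1]\oplus\Hom(B_1,A_2)[1]$. The glued Hom complex misses only the last summand, and semiorthogonality $\Hom_{\calD}(\calB,\calA)=0$ says exactly that $\Hom^\bullet_{\calD}(B_1,A_2)$ is acyclic. A filtration argument, with the subcomplex obtained by killing the $\Hom(B_1,A_2)$ piece and an acyclic quotient, then shows that the inclusion of the glued Hom complex is a quasi-isomorphism. This is the main obstacle: one must make sure the glued complex is a genuine subcomplex, or a quotient, of the cone Hom complex, with the correct triangular shape. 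I would try the upper-triangular part first, which is closed under the twisted differential because the twist $\phi$ only goes from $A$ to $B$.

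Finally, essential surjectivity on homotopy categories follows from the decomposition triangle $i_{\calB}i_{\calB}^!E\to E\to i_{\calA}i_{\calA}^*E$. Its connecting morphism gives a closed degree-zero element $\phi\in M^0(i_{\calA}^*E,i_{\calB}^!E)$, after choosing a representative of the class, and then $E\simeq\Psi(i_{\calA}^*E,i_{\calB}^!E,\phi)$. Since $\calD$ is pretriangulated, the cone lies in $\calD$ up to homotopy equivalence. Hence $\Psi$ is a quasi-equivalence, and by construction it sends $i_{\calA},i_{\calB}$ to the given inclusions, compatibly with Proposition~\ref{SOD}.
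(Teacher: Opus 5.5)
The paper gives no proof here beyond citing \cite[Proposition 4.10]{Kuznetsov_2014}. Your proposal is a correct, self-contained version of the standard argument behind that citation: the cone functor into twisted complexes over $\calD$ is quasi-fully faithful because the upper-triangular part is a subcomplex whose quotient $\Hom^\bullet_{\calD}(B_1,A_2)$ is acyclic by semiorthogonality, and it is essentially surjective by the decomposition triangle.

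One bookkeeping slip: the underlying graded object of $\Cone(A_i[-1]\xrightarrow{\phi_i}B_i)$ is $B_i\oplus A_i$, so the off-diagonal summands of the Hom complex carry no shifts. Your $[-1]$ and $[1]$ are the ones for $\Cone(A_i\to B_i)$, and the unshifted form is exactly what lets $c\in M^{n-1}(A_1,B_2)=\Hom^{n}_{\calD}(A_1,B_2)$ sit in the upper-right entry.
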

\begin{corollary} \label{heartshearts}
Let $\calA$ and $\calB$ be pre-triangulated dg-categories whose homotopy categories are equipped with $t$-structures, with respective hearts $\calA_0$ and $\calB_0$. Consider the glued dg-category $\calA\times_M\calB$. Then there is an induced abelian bimodule
\begin{align*}
     M_0 \colon\calA_0^{\opp} \times \calB_0 &\longrightarrow \modd R, \\
     (A,B) &\longmapsto H^0(M(A,B)).
\end{align*}
If the $t$-structures glue with heart $\glue$, then
\begin{align*}
    \glue \simeq \calA_0 \times_{M_0} \calB_0.
\end{align*}
\end{corollary}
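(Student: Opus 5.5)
The plan is to reduce the statement to Corollary \ref{relatinghearts} together with Proposition \ref{isobimod}. By Proposition \ref{SOD}, the homotopy category $[\calA\times_M\calB]$ has the semiorthogonal decomposition $\langle[\calA],[\calB]\rangle$, and its gluing bifunctor is induced by $M$. The point to make precise is a natural isomorphism, for $A\in[\calA]$ and $B\in[\calB]$,
\begin{align*}
\Hom_{[\calA\times_M\calB]}\bigl(i_{\calA}A,\,i_{\calB}B[1]\bigr)\cong H^0(M(A,B)).
\end{align*}
I would check this directly from the morphism complex of the glued dg-category. For objects $(A,0,0)$ and $(0,B',0)$ with $B'$ representing $B[1]$, the morphism complex reduces to $M^{\bullet-1}(A,B')$, because the components $\Hom_{\calA}(A,0)$ and $\Hom_{\calB}(0,B')$ vanish and the extra terms in the differential disappear. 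Since $M$ is a dg-functor, $M(A,B[1])\cong M(A,B)[1]$. Taking $H^0$ therefore gives $H^{-1}(M(A,B)[1])=H^0(M(A,B))$. Naturality in $A$ and $B$ should follow because composition with morphisms $(a,0,0)$ and $(0,b,0)$ acts through the bimodule action, up to the sign $(-1)^{\deg b}$, which is trivial for degree-zero closed morphisms.

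Once this is established, $M_0$ is a well-defined $R$-linear bifunctor on $\calA_0^{\opp}\times\calB_0$, since $H^0$ of a dg-bifunctor descends to homotopy categories. Next I would assume the $t$-structures glue, meaning $\Hom^{\le0}(\calA_0,\calB_0)=0$ in $[\calA\times_M\calB]$. Corollary \ref{relatinghearts} then gives $\glue\simeq\calA_0\times_{N}\calB_0$ with $N(A,B)=\Ext^1_{\glue}(A,B)$. The remark after that corollary identifies $N(A,B)$ naturally with $\Hom_{\calD}(A,B[1])$, because the heart of a $t$-structure has $\Ext^1$ equal to $\Hom(-,-[1])$ in the ambient triangulated category. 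Combining this with the displayed isomorphism gives a natural isomorphism $N\cong M_0$. Proposition \ref{isobimod} then yields $\calA_0\times_N\calB_0\simeq\calA_0\times_{M_0}\calB_0$, which proves the claim.

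As a side remark, $M_0$ also satisfies Assumption \ref{exaxtas}. This is because $N$ does, by the proof of Proposition \ref{torsionpairbimodule}, so the glued category $\calA_0\times_{M_0}\calB_0$ is abelian. It also follows from the equivalence itself.

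The main obstacle is the first step: establishing the displayed isomorphism with correct naturality, including the shift conventions and signs. In particular, one must check that the cone and shift structure in the pre-triangulated glued category matches the dg-functorial shift of $M$. I would handle this by invoking the statement of Proposition \ref{SOD} that the gluing bifunctor is induced by $M$. If that proves insufficient, I would fall back on the explicit computation of the morphism complex sketched above.
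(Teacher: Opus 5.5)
Your proposal is correct and follows essentially the paper's argument. You identify $\Hom_{[\calA\times_M\calB]}(A,B[1])$ with $H^0(M(A,B))$ from the glued morphism complex, match it with $\Ext^1_{\glue}$ via Corollary \ref{relatinghearts}, and conclude with Proposition \ref{isobimod}. The only cosmetic difference is that the paper reads off $H^1$ of $M^{\bullet-1}(A,B)$ directly instead of passing through $M(A,B[1])\cong M(A,B)[1]$; you are also somewhat more explicit than the paper about naturality.
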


\begin{proof}
By the definition of the dg-gluing, for $A\in\calA_0$ and $B\in\calB_0$ we have
\begin{align*}
\Hom^\bullet_{\calA\times_M\calB} \left((A,0,0),(0,B,0)\right)
&= \Hom^\bullet_{\calA}(A,0) \oplus \Hom^\bullet_{\calB}(0,B) \oplus M^{\bullet-1}(A,B) \\
&\simeq M^{\bullet-1}(A,B).
\end{align*}
The first two summands vanish, and therefore in degree $1$ cohomology, we obtain
\begin{align*}
     \Hom_{[\calA \times_M \calB]}\left(A, B[1]\right)  \simeq H^0(M(A,B)).
\end{align*}
Since the $t$-structures glue, the extension bimodule of the glued heart is
\begin{align*}
    \Ext^1_{\glue}(A,B)\simeq \Hom_{[\calA \times_M \calB]}(A,B[1]),
\end{align*}
as in Corollary \ref{relatinghearts}. Thus the abelian bimodules $M_0$ and $\Ext^1_{\glue}$ are naturally isomorphic. In particular, $M_0$ satisfies Assumption \ref{exaxtas}. By Proposition \ref{isobimod} and Proposition \ref{torsionpairbimodule}, we obtain
\begin{align*}
    \glue \simeq \calA_0 \times_{M_0} \calB_0.
\end{align*}
\end{proof}

\begin{remark} \label{relrel}
Let $\calA$ and $\calB$ be abelian categories and let
\begin{align*}
    M_0 \colon \calA^{\opp}\times\calB\longrightarrow \modd R
\end{align*}
be an abelian bimodule satisfying Assumption \ref{exaxtas}. Then $\calA\times_{M_0}\calB$ is abelian, and by Proposition \ref{gluedderivedsod} we have a semiorthogonal decomposition
\begin{align*}
    \calD(\calA\times_{M_0}\calB)=\langle \calD(\calA),\calD(\calB)\rangle.
\end{align*}
Let $\calD$ be a dg-enhancement of $\calD(\calA\times_{M_0}\calB)$, and let
$\calD_{\calA},\calD_{\calB}\subseteq\calD$ be the full dg-subcategories whose homotopy categories are $\calD(\calA)$ and $\calD(\calB)$, respectively. These exist by \cite[Theorem A]{aenhanced}. By Proposition \ref{induced11}, $\calD$ is quasiequivalent to the gluing $
    \calD_{\calA}\times_{M}\calD_{\calB}$
with respect to the dg-bimodule
\begin{align*}
    M \colon \calD_{\calA}^{\opp} \otimes \calD_{\calB} &\longrightarrow C(R), \\
    (A,B) &\longmapsto \Hom_{\calD}^{\bullet}(A,B)[1].
\end{align*}
Thus the semiorthogonal decomposition of the derived category of the glued abelian category is induced by the dg-gluing of its two components. In this sense, the dg-bimodule $M$ lifts the gluing bifunctor associated to the semiorthogonal decomposition
\begin{align*}
    \calD(\calA\times_{M_0}\calB)=\langle \calD(\calA),\calD(\calB)\rangle.
\end{align*}
\end{remark}

We finish this section by considering dg-gluing in the comma-category setting and identifying the glued abelian category as an abelian comma category.
\begin{corollary} \label{commagluing}
Consider a diagram of pre-triangulated dg-categories
\begin{align*}
\calB \xrightarrow{G} \calC \xleftarrow{F} \calA,
\end{align*}
where $F$ and $G$ are dg-functors. Assume that $[\calA]$, $[\calB]$, and $[\calC]$ are equipped with $t$-structures. If $[F]$ is right $t$-exact and $[G]$ is left $t$-exact, then the $t$-structures on $[\calA]$ and $[\calB]$ glue to a $t$-structure on the homotopy category $[(F\downarrow G)]$. Moreover, the glued heart is identified with an abelian comma category.
\end{corollary}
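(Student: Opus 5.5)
The plan is to identify $[(F\downarrow G)]$ as a dg-gluing, check the gluing condition of Proposition \ref{gluedheart}, and then read off the glued heart as an abelian comma category via Corollary \ref{heartshearts} and \S\ref{commarelation}.

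\emph{Step 1: the dg-gluing.} Recall from \cite{Kuznetsov_2014} that the dg-comma category $(F\downarrow G)$ is the dg-gluing $\calA\times_M\calB$ along the dg-bimodule
\begin{align*}
M(A,B)=\Hom^\bullet_{\calC}(F(A),G(B)).
\end{align*}
By Proposition \ref{SOD} we then have $[(F\downarrow G)]=\langle[\calA],[\calB]\rangle$, with gluing bifunctor induced by $M$. For $A\in[\calA]$ and $B\in[\calB]$, the computation in the proof of Corollary \ref{heartshearts} gives
\begin{align*}
\Hom_{[(F\downarrow G)]}(A,B[i])\simeq H^{i-1}(M(A,B))=\Hom_{[\calC]}(F(A),G(B)[i-1]).
\end{align*}

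\emph{Step 2: the gluing condition.} This is the only nonformal point. Take $A\in\calA_0$, $B\in\calB_0$ and $i\le 0$. We need
\begin{align*}
\Hom_{[\calC]}(F(A),G(B)[i-1])=0 .
\end{align*}
Right $t$-exactness of $[F]$ gives $F(A)\in[\calC]^{\le 0}$. Left $t$-exactness of $[G]$ gives $G(B)\in[\calC]^{\ge 0}$, so $G(B)[i-1]\in[\calC]^{\ge 1-i}\subseteq[\calC]^{\ge 1}$. Orthogonality of the $t$-structure on $[\calC]$ then forces the Hom to vanish. Hence
\begin{align*}
\Hom^{\le 0}(\calA_0,\calB_0)=0,
\end{align*}
and Proposition \ref{gluedheart} yields a glued $t$-structure with heart $\glue$.

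\emph{Step 3: identifying the heart.} Corollary \ref{heartshearts} now gives $\glue\simeq\calA_0\times_{M_0}\calB_0$, where
\begin{align*}
M_0(A,B)=H^0(M(A,B))=\Hom_{[\calC]}(F(A),G(B)).
\end{align*}
To match the setup of \S\ref{commarelation}, I will define functors between hearts with $\calC_0$ the heart of $[\calC]$: set $F_0:=H^0_{\calC}\circ F|_{\calA_0}\colon\calA_0\to\calC_0$ and $G_0:=H^0_{\calC}\circ G|_{\calB_0}\colon\calB_0\to\calC_0$. Since $F(A)\in[\calC]^{\le0}$ and $G(B)\in[\calC]^{\ge0}$, truncation adjunctions give
\begin{align*}
\Hom(F(A),G(B))\cong\Hom(\tau^{\ge0}F(A),G(B))\cong\Hom(H^0F(A),\tau^{\le0}G(B))=\Hom_{\calC_0}(F_0A,G_0B),
\end{align*}
naturally in $A$ and $B$. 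The standard facts that a right $t$-exact functor induces a right exact $H^0F$ on hearts, and a left $t$-exact functor induces a left exact $H^0G$, make $F_0$ right exact and $G_0$ left exact. Proposition \ref{isobimod} then replaces $M_0$ by $\Hom_{\calC_0}(F_0-,G_0-)$, and \S\ref{commarelation} identifies $\calA_0\times_{M_0}\calB_0$ with the abelian comma category $(F_0\downarrow G_0)$.

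The main obstacle is keeping the degree shift in $M^{\bullet-1}$ straight in Step 2, together with checking naturality of the truncation isomorphisms in Step 3; the remaining steps are direct citations.
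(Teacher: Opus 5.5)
Your proposal is correct and follows essentially the same route as the paper. Both arguments identify $(F\downarrow G)$ as the dg-gluing along $\Hom^\bullet_{\calC}(F(-),G(-))$ and reduce the gluing condition to $\Hom^{\le -1}_{[\calC]}(F(\calA_0),G(\calB_0))=0$ via the two $t$-exactness hypotheses, then identify the heart through Corollary~\ref{heartshearts} and the abelian comma-category discussion. Your truncation-adjunction argument for the natural isomorphism $\Hom_{[\calC]}(F(A),G(B))\cong\Hom_{\calC_0}(F_0A,G_0B)$ fills in a step that the paper only asserts.
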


\begin{proof}
The dg-comma category $(F\downarrow G)$ is the glued dg-category
$\calA\times_M\calB$ with respect to the bimodule
\begin{align*}
M\colon \calA^{\opp}\otimes \calB &\rightarrow C(R), \\
(A,B) &\mapsto \Hom_{\calC}^{\bullet}(F(A),G(B)).
\end{align*}
As in Corollary \ref{heartshearts}, for $A\in\calA_0$ and $B\in\calB_0$ we have
\begin{align*}
\Hom^{\leq 0}_{[(F\downarrow G)]}(\calA_0,\calB_0) = \Hom^{\leq -1}_{[\calC]}(F(\calA_0),G(\calB_0)).
\end{align*}
By assumption, $[F]$ is right $t$-exact and $[G]$ is left $t$-exact, which implies that  $ F(\calA_0)\subset [\calC]^{\leq 0}$ and $ G(\calB_0)\subset [\calC]^{\geq 0}.$
As a result, we have the following vanishing:
\begin{align*}
\Hom^{\leq -1}_{[\calC]}(F(\calA_0),G(\calB_0))=0.
\end{align*}
Therefore the $t$-structures glue by Proposition \ref{gluedheart}. 
Since $[F]$ is right $t$-exact and $[G]$ is left $t$-exact, the induced zeroth cohomology functors on the hearts,
\begin{align*}
F_0:\calA_0\rightarrow\calC_0,\qquad G_0:\calB_0\rightarrow\calC_0,
\end{align*}
are right exact and left exact, respectively. Thus, the comma category $(F_0 \downarrow G_0)$ is abelian.
The induced abelian bimodule on the hearts is
\begin{align*}
M_0\colon \calA_0^{\opp}\times \calB_0 &\rightarrow \modd R, \\
(A,B) &\mapsto \Hom_{\calC_0}(F_0(A),G_0(B)).
\end{align*}
With this bimodule, the glued heart is $\calA_0\times_{M_0}\calB_0$. Therefore, we get an equivalence of abelian categories
\begin{align*}
\calA_0\times_{M_0}\calB_0 \simeq (F_0\downarrow G_0).
\end{align*}
\end{proof}

\section{The support property for glued stability conditions} \label{background}

We recall the description of the glued heart that will be used throughout this section. By Corollary ~\ref{relatinghearts}, the heart obtained by gluing $t$-structures with hearts $\calA_0$ and $\calB_0$ on the components of the semiorthogonal decomposition $\calD=\langle \calA,\calB\rangle$ is identified with the glued abelian category
\begin{align*}
    \glue=\calA_0\times_{M_0}\calB_0
\end{align*}
in which $M_0$ is given by
\begin{align*}
    M_0(A,B)=\Ext^1_{\glue}(A,B).
\end{align*}
 for $A\in\calA_0$ and $B\in\calB_0$. Thus an object of $\glue$ may be written as a triple $(A,B,\mu)$, where $A\in\calA_0$, $B\in\calB_0$, and $\mu\in M_0(A,B)$. 

For the remainder of this paper, we use the term \textit{pre-stability conditions} to mean locally finite stability conditions in the sense of \cite{bridgeog}. When they satisfy the support property, we call them \textit{stability conditions}. 

 In \cite{gluing1} the term \textit{reasonable stability conditions} is used for pre-stability conditions satisfying the condition that $\inf |Z(E)|$ is positive, where $E$ ranges over all nonzero semistable objects. We note that a stability condition is always reasonable by \cite[Propositions 2.6 and 2.7]{kawatani}.
 
\subsection{The support property given Collins--Polishchuk phase bounds} \label{cpcase}

We will first prove that conditions stated in \cite[Theorem 3.6]{gluing1} are enough to prove the support property as a special case of a semistable phase gap in Theorem \ref{sectorbound}. The proof of a technical step, Lemma \ref{cone}, is provided after the following proof.

\begin{proposition} \label{gluingg}
    Let $\sigma_\calA=(Z_{\calA}, \calP_{\calA})$ and $\sigma_{\calB}=(Z_{\calB}, \calP_{\calB})$ be stability conditions on the components of the semiorthogonal decomposition $\calD=\langle \calA, \calB \rangle$. If the following conditions are satisfied:
    \begin{enumerate}[ leftmargin=0pt, labelsep=0.5em, itemindent=!, align=left ]
        \item [\hypertarget{ass:CP1}{\textup{(CP1)}}] $\Hom^{\le 0}_{\calD}(\calP_{\calA}(0,1], \calP_{\calB}(0,1] )=0;$ 
        \item [\hypertarget{ass:CP2}{\textup{(CP2)}}] $\Hom^{\le 0}_{\calD}(\calP_{\calA}(\alpha,\alpha+1], \calP_{\calB}(\alpha,\alpha+1] )=0$ for some $\alpha \in (0,1)$;
    \end{enumerate}
    then $\sigma_\calA,\sigma_{\calB}$ glue to a stability condition on $\calD$. 
\end{proposition}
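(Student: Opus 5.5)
The plan has three steps: obtain the glued pre-stability condition from \cite[Theorem 3.6]{gluing1}, derive a uniform phase gap for its semistable objects from \hyperlink{ass:CP2}{\textup{(CP2)}}, and then prove the support property through a sector estimate on central charges.

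\emph{Step 1: the glued pre-stability condition.} By \hyperlink{ass:CP1}{\textup{(CP1)}} and Proposition \ref{gluedheart}, the $t$-structures glue with heart $\glue=\calA_0\times_{M_0}\calB_0$, where $\calA_0=\calP_\calA(0,1]$ and $\calB_0=\calP_\calB(0,1]$. By Remark \ref{boundedfunctions}, $Z_{\gl}=Z_\calA+Z_\calB$ is a stability function on this heart. Since stability conditions are reasonable \cite{kawatani}, \cite[Theorem 3.6]{gluing1} applies and shows that $\sigma_{\gl}=(\glue,Z_{\gl})$ is a reasonable, locally finite pre-stability condition with slicing $\calP$. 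The same argument applied to the rotated hearts $\calP_\calA(\alpha,\alpha+1]$ and $\calP_\calB(\alpha,\alpha+1]$ uses \hyperlink{ass:CP2}{\textup{(CP2)}}. It shows that these hearts glue to a heart $\gl_\alpha$, and the Collins--Polishchuk argument identifies $\gl_\alpha$ with $\calP(\alpha,\alpha+1]$. Moreover, $i_\calA^*$ and $i_\calB^!$ are $t$-exact for both glued $t$-structures. It remains to prove the support property.

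\emph{Step 2: a phase gap.} Let $E=(A,B,\mu)\in\glue$ be $\sigma_{\gl}$-semistable of phase $\phi\in(0,1]$, and split according to the position of $\phi$ relative to $\alpha$.
\begin{itemize}
\item If $\phi>\alpha$, then $E\in\calP(\alpha,1]\subset\gl_\alpha$. By $t$-exactness, $A\in\calA_0\cap\calP_\calA(\alpha,\alpha+1]=\calP_\calA(\alpha,1]$, and similarly $B\in\calP_\calB(\alpha,1]$.
\item If $\phi\le\alpha$, then $E[1]\in\calP(1,\alpha+1]\subset\gl_\alpha$. Hence $A[1]\in\calP_\calA(\alpha,\alpha+1]\cap\calP_\calA(1,2]$, so $A\in\calP_\calA(0,\alpha]$, and likewise $B\in\calP_\calB(0,\alpha]$.
\end{itemize}
In both cases all HN phases of $A$ and $B$ lie in an interval of length at most $\max(\alpha,1-\alpha)=1-\epsilon$, where $\epsilon=\min(\alpha,1-\alpha)>0$. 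In particular condition \hyperlink{ass:SS}{\textup{(S)}} holds with this $\epsilon$. The proposition then follows from Theorem \ref{sectorbound}; for completeness, Step 3 gives the argument directly.

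\emph{Step 3: support property via a sector bound.} This is where Lemma \ref{cone} enters, and I expect it to be the main obstacle. Let $\Lambda=\Lambda_\calA\oplus\Lambda_\calB$ and $v(E)=(v_\calA(A),v_\calB(B))$. The claim is that if nonzero complex numbers $z_1,\dots,z_n$ have arguments in an interval of length $\pi(1-\epsilon)$, then
\begin{align*}
\Big|\sum_i z_i\Big|\ \ge\ \sin\!\big(\tfrac{\pi\epsilon}{2}\big)\sum_i|z_i|.
\end{align*}
To see this, project onto the bisector of the sector: each $z_i$ makes an angle of at most $\pi(1-\epsilon)/2$ with the bisector, so its projection is at least $\cos(\pi(1-\epsilon)/2)\,|z_i|=\sin(\pi\epsilon/2)\,|z_i|$.

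Apply this to the HN factors $A_j$ of $A$ and $B_k$ of $B$. The support property on the components gives constants with $\|v(A_j)\|\le C_\calA|Z_\calA(A_j)|$ and $\|v(B_k)\|\le C_\calB|Z_\calB(B_k)|$. Combining these with the triangle inequality yields
\begin{align*}
\|v(E)\|\le \max(C_\calA,C_\calB)\,\sin(\pi\epsilon/2)^{-1}\,|Z_{\gl}(E)|
\end{align*}
for every semistable $E$ in the heart, and hence, after shifting, for every semistable object. If $\mu=0$ and $E$ is semistable, then $E=A\oplus B$ with $A$ and $B$ semistable of the same phase, so the bound is immediate. This gives the support property for $\sigma_{\gl}$.
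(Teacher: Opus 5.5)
Your argument is correct and has the same skeleton as the paper's proof. Both use \cite[Theorem 3.6]{gluing1} for the HN property, the lattice $\Lambda_\calA\oplus\Lambda_\calB$ with the sum norm, and Lemma \ref{cone} with the same constant: with $\epsilon=\min\{\alpha,1-\alpha\}$, your $\sin(\pi\epsilon/2)$ equals the paper's $c_\alpha=\min\{\cos(\pi\alpha/2),\sin(\pi\alpha/2)\}$. The real difference is in how you confine the HN phases of $A$ and $B$ to $(0,\alpha]$ or to $(\alpha,1]$.

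\textbf{The paper's route.} It takes from \cite{gluing1} only that $(\calT_\alpha,\calF_\alpha)$ is a torsion pair in $\glue$. It notes that nonzero objects of $\calT_\alpha$ and $\calF_\alpha$ have $Z_{\gl}$-phase in $(\alpha,1]$ and $(0,\alpha]$ respectively. Semistability of $E$ then forces $T_E=0$ or $F_E=0$.

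\textbf{Your route.} You take the identification $\gl_\alpha=\calP(\alpha,\alpha+1]$ as known. You then read off the slices from the $t$-exactness of $i_\calA^*$ and $i_\calB^!$ for both glued $t$-structures.

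\textbf{Where the content sits.} That identification carries all the work. Since $\gl_\alpha\subseteq\langle\glue,\glue[1]\rangle$ and, by the same $t$-exactness, $\glue\cap\gl_\alpha=\calT_\alpha$ and $\glue\cap\gl_\alpha[-1]=\calF_\alpha$, the heart $\gl_\alpha$ is the tilt of $\glue$ at $(\calT_\alpha,\calF_\alpha)$. It therefore equals $\calP(\alpha,\alpha+1]$ exactly when $\calT_\alpha=\calP(\alpha,1]$ and $\calF_\alpha=\calP(0,\alpha]$. Proving the inclusions $\calP(\alpha,1]\subseteq\calT_\alpha$ and $\calP(0,\alpha]\subseteq\calF_\alpha$ is precisely the paper's semistability argument.

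\textbf{What each buys.} Your version is conceptually cleaner. It presents (CP2) as the existence of a second glued heart compatible with the glued slicing. It also reduces the proposition explicitly to Theorem \ref{sectorbound} with $\epsilon=\min\{\alpha,1-\alpha\}$. However, it is only as self-contained as your citation. If you cannot point to the identification in \cite{gluing1}, you need to add the short tilting argument above. The paper's proof needs nothing from \cite{gluing1} beyond the torsion pair.
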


\begin{proof} 
The t-structures glue by condition \hyperlink{ass:CP1}{\textup{(CP1)}} and the HN-property follows by the original theorem \cite[Theorem 3.6]{gluing1}. It remains to prove the support property. Let $\Lambda_{\calA}$ and $\Lambda_{\calB}$ be the lattices from the support properties of $\sigma_{\calA}$ and $\sigma_{\calB}$. There exist norms $\|\cdot\|_{\calA}$ and $\|\cdot\|_{\calB}$ and constants $C_{\calA},C_{\calB}>0$ such that
\begin{align*}
    \|u_{\calA}([A'])\|_{\calA}&\leq C_{\calA}|Z_{\calA}(A')|, &
    \|u_{\calB}([B'])\|_{\calB}&\leq C_{\calB}|Z_{\calB}(B')|
\end{align*}
for all semistable objects $A'$ and $B'$ in the two components. We consider the lattice
\begin{align*}
    \Lambda=\Lambda_{\calA}\oplus\Lambda_{\calB},\qquad
    v(A,B,\mu)=\left(u_{\calA}([A]),u_{\calB}([B])\right)
\end{align*}
with the norm $\|(\lambda_{\calA},\lambda_{\calB})\| =\|\lambda_{\calA}\|_{\calA}+\|\lambda_{\calB}\|_{\calB}.$

It is enough to prove the support property for semistable objects  $E=(A,B,\mu)$ in the heart $\glue$. We equip the glued heart with the central charge $Z_{\gl}=Z_{\calA}+Z_{\calB}$ as in Remark \ref{boundedfunctions}. In \cite[Equation $(3.4)$]{gluing1}, the following subcategories
\begin{align*}
\calF_{\alpha} &= \left\{(A',B',\mu')\in\glue \mid A'\in\calP_{\calA}(0,\alpha],\ B'\in\calP_{\calB}(0,\alpha]\right\},\\
\calT_{\alpha} &= \left\{(A',B',\mu')\in\glue \mid A'\in\calP_{\calA}(\alpha,1],\ B'\in\calP_{\calB}(\alpha,1]\right\}.
\end{align*}
are seen to be a torsion pair $(\calT_{\alpha},\calF_{\alpha})$ in $\glue$. In particular, by Lemma \ref{cone}, all nonzero objects $T_E \in \calT_{\alpha}$ and $F_E \in \calF_{\alpha}$ satisfy $\phi(Z_{\gl}(T_E)) \in (\alpha,1]$ and $\phi(Z_\gl(F_E)) \in (0,\alpha]$. We consider the torsion decomposition of a semistable object 
$E=(A,B,\mu) \in \glue$:
\begin{align*}
    0 \rightarrow T_E \rightarrow E \rightarrow F_E \rightarrow 0
\end{align*}
with $T_E \in \calT_{\alpha}$ and $F_E \in\calF_{\alpha}$. Since $E$ is semistable we have 
\begin{align*} 
    \phi(Z_{\gl}(T_E))\leq\phi(E) \leq \phi(Z_\gl(F_E))
\end{align*}
which can only happen if either $T_E$ or $F_E$ is zero. Thus, $E$ is identified with either $T_E$ or $F_E$ depending on its phase. 

\begin{itemize}
    \item If $E\in\calP(0,\alpha]$, then $E\cong F_E$, and the phases of the HN factors of $A$ and $B$ lie in $(0,\alpha]$.
    \item If $E\in\calP(\alpha,1]$, then $E\cong T_E$, and the phases of the HN factors of $A$ and $B$ lie in $(\alpha,1]$.
\end{itemize}
Each of these intervals is a sector of width strictly less than $\pi$. We set
\begin{align*}
    c_{\alpha}\coloneqq \min\left\{\cos\left(\frac{\pi\alpha}{2}\right),
  \sin\left(\frac{\pi\alpha}{2}\right)\right\}>0.
\end{align*}
Let $A'_1,\ldots,A'_r$ and $B'_1,\ldots,B'_s$ be the HN factors of $A$ and $B$, respectively. By Lemma \ref{cone} we get
\begin{align*}
    |Z_{\gl}(E)| \geq c_{\alpha}\left( \sum_{i=1}^{r}|Z_{\calA}(A'_i)| + \sum_{j=1}^{s}|Z_{\calB}(B'_j)| \right).
\end{align*}
Then by the support property on the two components:
\begin{align*}
    \|v(E)\| &\leq \sum_{i=1}^{r}\|u_{\calA}([A'_i])\|_{\calA} + \sum_{j=1}^{s}\|u_{\calB}([B'_j])\|_{\calB}\\
    &\leq \max\{C_{\calA},C_{\calB}\} \left( \sum_{i=1}^{r}|Z_{\calA}(A'_i)| + \sum_{j=1}^{s}|Z_{\calB}(B'_j)| \right)\\
    &\leq \frac{\max\{C_{\calA},C_{\calB}\}}{c_{\alpha}} |Z_{\gl}(E)|
\end{align*}
and $\sigma_{\gl}$ satisfies the support property.
\end{proof}
We provide the proof of a technical step used in the previous proof. For $n=2$ this is also mentioned in \cite[Proposition $2.1$]{kawatani}.

\begin{lemma}\label{cone}
Let $z_1, \ldots, z_n$ be nonzero complex numbers with arguments $\arg z_k \in [\alpha,\beta]$ for all $k$, where $0<\beta-\alpha<\pi$. Then
\begin{align*}
\left|\sum_{k=1}^n z_k\right| \ge \cos\left(\frac{\beta - \alpha}{2}\right) \sum_{k=1}^n |z_k|.
\end{align*}
\end{lemma}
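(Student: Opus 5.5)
Our plan is to rotate the configuration so that the sector of directions becomes symmetric about the positive real axis, and then project the sum onto that axis.

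First, set $\gamma \coloneqq \frac{\alpha+\beta}{2}$ and $\delta \coloneqq \frac{\beta-\alpha}{2}$. By hypothesis $0<\delta<\pi/2$, so $\cos\delta>0$. Multiplying every $z_k$ by the unit complex number $e^{-i\gamma}$ leaves $\left|\sum_k z_k\right|$ and each $|z_k|$ unchanged. After this rotation we may therefore assume $\arg z_k\in[-\delta,\delta]$ for all $k$.

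Second, write $z_k=|z_k|e^{i\theta_k}$ with $|\theta_k|\le\delta$. Since cosine is even and decreasing on $[0,\pi/2]$, we get $\ree(z_k)=|z_k|\cos\theta_k\ge |z_k|\cos\delta$. The modulus of a complex number is at least its real part, so
\begin{align*}
\left|\sum_{k=1}^n z_k\right|\ \ge\ \ree\left(\sum_{k=1}^n z_k\right)\ =\ \sum_{k=1}^n \ree(z_k)\ \ge\ \cos\delta\sum_{k=1}^n|z_k|,
\end{align*}
which is exactly the claimed inequality.

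The only point needing care is the convention for arguments. If the interval $[\alpha,\beta]$ is read modulo $2\pi$, we should choose representatives $\theta_k$ lying in the actual interval $[\alpha,\beta]$ before subtracting $\gamma$. This is possible because the sector has width less than $\pi$, and the choice is what guarantees $|\theta_k-\gamma|\le\delta$. Beyond this bookkeeping there is no real obstacle.

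For the application in Proposition \ref{gluingg}, note that phases in $(0,\alpha]$ correspond to arguments in $(0,\pi\alpha]$, and phases in $(\alpha,1]$ correspond to arguments in $(\pi\alpha,\pi]$. These intervals have widths $\pi\alpha$ and $\pi(1-\alpha)$, both less than $\pi$. The lemma then yields the constants $\cos(\pi\alpha/2)$ and $\cos(\pi(1-\alpha)/2)=\sin(\pi\alpha/2)$, which is why the proof takes $c_\alpha$ to be the minimum of these two.
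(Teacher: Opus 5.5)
Your proof is correct and is essentially the paper's argument: you rotate by $e^{-i(\alpha+\beta)/2}$ so the sector becomes symmetric about the positive real axis, bound each real part below by $\cos\bigl(\frac{\beta-\alpha}{2}\bigr)|z_k|$, and use that the modulus of the sum dominates its real part. Your remark on choosing argument representatives inside $[\alpha,\beta]$ and your check of the constant $c_\alpha$ in Proposition~\ref{gluingg} are accurate additions.
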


\begin{proof}
Let $\theta = \cfrac{\alpha + \beta}{2}$. For each $k$,
\begin{equation} \label{cosine}
    0 \leq |\arg z_k - \theta| \le \cfrac{\beta - \alpha}{2} < \cfrac{\pi}{2}
\end{equation}
since $\beta-\alpha<\pi$. This implies 
\begin{align*}
    \ree(e^{-i\theta}z_k)=\cos(\arg(z_k)-\theta) |z_k| \geq \cos \left( \cfrac{\beta-\alpha}{2}\right)|z_k|
\end{align*}
using \eqref{cosine} and the monotonicity of cosine on $[0,\pi/2]$. In particular,
\begin{align*}
    \left| \sum_{k=1}^{n}z_k\right| = \left| \sum_{k=1}^{n} e^{-i \theta}z_k \right| &\ge \left|\ree\left[\sum_{k=1}^{n} e^{-i\theta}z_k  \right]   \right| \\
    &= \left|\sum_{k=1}^{n} \ree[e^{-i \theta} z_k]\right| \ge \cos \left( \cfrac{\beta-\alpha}{2}\right) \sum_{k=1}^{n}|z_k|
\end{align*}
\end{proof}
As already mentioned in \cite{gluing1}, it may be hard to check whether condition \hyperlink{ass:CP2}{\textup{(CP2)}} holds true in an arbitrary semiorthogonal decomposition. In the next proposition, we will provide an alternative description of the conditions of Proposition \ref{gluingg}, where the purpose of condition \hyperlink{ass:CP2}{\textup{(CP2)}} is isolated.

\begin{proposition}\label{popp}
Let $\calD=\langle\calA,\calB\rangle$ be a triangulated category with a semiorthogonal decomposition, and let $\sigma_{\calA}=(\calA_0,Z_{\calA}), \sigma_{\calB}=(\calB_0,Z_{\calB})$
be stability conditions on the components, so that \hyperlink{ass:CP1}{\textup{(CP1)}} holds. Then
\hyperlink{ass:CP2}{\textup{(CP2)}} is equivalent to the following condition:
\begin{enumerate}[ leftmargin=0pt, labelsep=0.5em, itemindent=!, align=left ]
 \item [\hypertarget{ass:CP3}{\textup{(CP3)}}]
There exists $\alpha\in(0,1)$ such that, for every nonzero
$\sigma_{\calA}$-semistable object $A\in\calA_0$ and every nonzero
$\sigma_{\calB}$-semistable object $B\in\calB_0$, we have
\begin{align*}
\phi_{\calB}(B)\leq\alpha<\phi_{\calA}(A)
\quad\Longrightarrow\quad
M_0(A,B)=0.
\end{align*}
\end{enumerate}
\end{proposition}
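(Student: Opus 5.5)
Under (CP1) the hearts $\calA_0=\calP_{\calA}(0,1]$ and $\calB_0=\calP_{\calB}(0,1]$ glue, and $M_0(A,B)=\Ext^1_{\glue}(A,B)\cong\Hom_{\calD}(A,B[1])$ for $A\in\calA_0$, $B\in\calB_0$. The plan is to unwind (CP2) into vanishing statements for semistable objects and compare them degree by degree. Every object of $\calP_{\calA}(\alpha,\alpha+1]$ is an iterated extension of semistable objects of phases in $(\alpha,\alpha+1]$, and similarly for $\calB$. Since $\Hom_{\calD}(-,-[i])$ is half-exact in each variable, (CP2) is equivalent to
\begin{align*}
\Hom_{\calD}(A',B'[i])=0 \quad\text{for all } i\leq 0,
\end{align*}
where $A'$ and $B'$ are semistable with $\phi(A'),\phi(B')\in(\alpha,\alpha+1]$. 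The same reduction applies to (CP1) with $(0,1]$.

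I will sort the semistable pairs $(A',B')$ with phases in $(\alpha,\alpha+1]$ into four cases according to whether each phase lies in $(\alpha,1]$ or in $(1,\alpha+1]$. In the second range I write $A'=A[1]$ or $B'=B[1]$ with $A\in\calA_0$ or $B\in\calB_0$ of phase in $(0,\alpha]$.
\begin{itemize}
\item If both phases lie in $(\alpha,1]$, then (CP1) gives the vanishing directly.
\item If both phases lie in $(1,\alpha+1]$, the shifts cancel, since $\Hom(A[1],B[1+i])=\Hom(A,B[i])$, and (CP1) again gives the vanishing.
\item If $A'=A[1]$ and $B'=B$ lies in $(\alpha,1]$, then $\Hom(A[1],B[i])=\Hom(A,B[i-1])$ with $i-1\leq -1$, which vanishes by (CP1).
\item The remaining case is $A'=A$ with $\phi(A)\in(\alpha,1]$ and $B'=B[1]$ with $\phi(B)\in(0,\alpha]$. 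Here $\Hom(A,B[1+i])$ for $i\leq 0$ splits into $i\leq -1$, which vanishes by (CP1), and $i=0$, which is exactly $\Hom_{\calD}(A,B[1])=M_0(A,B)$.
\end{itemize}

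It follows that, given (CP1), the condition (CP2) for a fixed $\alpha$ is equivalent to $M_0(A,B)=0$ for all semistable $A\in\calA_0$, $B\in\calB_0$ with $\phi_{\calB}(B)\leq\alpha<\phi_{\calA}(A)$. This is (CP3) with the same $\alpha$, and both implications follow.

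The main obstacle is the first reduction, from the subcategories $\calP(\alpha,\alpha+1]$ to semistable objects, and in particular the converse direction (CP3)$\Rightarrow$(CP2). I would handle it with HN filtrations in the slicing: each object of $\calP_{\calA}(\alpha,\alpha+1]$ has a finite HN filtration with semistable factors in that interval. Applying $\Hom(-,B'[i])$ and $\Hom(A',-[i])$ to the triangles of the filtration gives exact sequences, and induction on filtration length propagates vanishing from the factors to the whole object. The forward direction (CP2)$\Rightarrow$(CP3) needs no reduction, since semistable objects are themselves in the subcategories. Finally, I must check the shift bookkeeping in the mixed cases to make sure no degree $i=0$ term slips outside the range covered by (CP1).
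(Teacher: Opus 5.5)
Your proposal is correct and follows the paper's proof essentially step for step. For the forward direction, you note that $B[1]\in\calP_{\calB}(1,\alpha+1]$, so $M_0(A,B)=\Hom_{\calD}(A,B[1])$ is one of the groups killed by (CP2). For the converse, you reduce to semistable objects and run the same four-case analysis on whether each phase lies in $(\alpha,1]$ or $(1,\alpha+1]$, where only the mixed case $A\in\calP_{\calA}(\alpha,1]$, $B[1]\in\calP_{\calB}(1,\alpha+1]$ in degree $0$ needs (CP3). Your HN-filtration argument for reducing to semistable objects fills in a step the paper simply asserts.
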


\begin{proof}
Assume that \hyperlink{ass:CP2}{\textup{(CP2)}} holds and consider nontrivial semistable objects $A\in\calA_0$, $B\in\calB_0$ such that  $\phi_{\calB}(B)\leq\alpha<\phi_{\calA}(A)$. In particular, $\phi_{\calB}(B) \in (0, \alpha]$ and $\phi_{\calA}(A) \in (\alpha,1] \subset (\alpha, \alpha+1]$ since both are objects in the respective hearts. Then,
\begin{align*}
    M_0(A,B) = \Hom_{\calD}(A,B[1]) \subseteq \Hom^{\le 0}_{\calD}(\calP_{\calA}(\alpha,\alpha+1], \calP_{\calB}(\alpha,\alpha+1] )=0
\end{align*}
where the last equality follows by \hyperlink{ass:CP2}{\textup{(CP2)}}.

For the converse direction we assume that \hyperlink{ass:CP3}{\textup{(CP3)}} holds.
Let $ A\in\calP_{\calA}(\alpha,\alpha+1], B\in\calP_{\calB}(\alpha,\alpha+1].$
It is enough to consider the case when both $A,B$ are semistable such that $ A\in\calP_{\calA}(\phi),B\in\calP_{\calB}(\psi)$, for $  \phi,\psi\in(\alpha,\alpha+1]$. We will prove that $\Hom_{\calD}^{k}(A,B)=0$ for every $k\leq0$ by reducing to \hyperlink{ass:CP1}{\textup{(CP1)}} for the first three cases.
\begin{enumerate}
    \item If $\phi,\psi\in(\alpha,1]$, then $A\in\calA_0$ and $B\in\calB_0$, and
therefore $ \Hom_{\calD}^{k}(A,B)=0$
for every $k\leq0$ by \hyperlink{ass:CP1}{\textup{(CP1)}}. 
\item  
If $\phi,\psi\in(1,\alpha+1]$, we have  $ A=A_0[1], B=B_0[1]$ and
\begin{align*}
A_0\in\calP_{\calA}(\phi-1)\subseteq\calA_0 \\
B_0\in\calP_{\calB}(\psi-1)\subseteq\calB_0.
\end{align*}
Then, for every $k\leq0$,
\begin{align*}
\Hom_{\calD}^{k}(A,B)
&= \Hom_{\calD}(A_0[1],B_0[1+k])  \\
&= \Hom_{\calD}^{k}(A_0,B_0) = 0.
\end{align*} 
\item 
If $\phi\in(1,\alpha+1]$ and $\psi\in(\alpha,1]$, then
$A=A_0[1]$ with $A_0\in\calA_0$. Since $B\in\calB_0$, for every $k\leq0$ we
obtain
\begin{align*}
\Hom_{\calD}^{k}(A,B)
&= \Hom_{\calD}(A_0[1],B[k])  \\
&= \Hom_{\calD}^{k-1}(A_0,B) = 0,
\end{align*}
since $k-1\leq0$.
\item If $\phi\in(\alpha,1], \psi\in(1,\alpha+1]$, 
we have $ B=B_0[1]$ for $B_0\in\calP_{\calB}(\psi-1)$.
In particular
\begin{align*}
0<\phi_{\calB}(B_0)=\psi-1\leq\alpha<\phi=\phi_{\calA}(A).
\end{align*}
Therefore, \hyperlink{ass:CP3}{\textup{(CP3)}} applies and $M_0(A,B_0)=0$.
For every $k<0$ we get
\begin{align*}
\Hom_{\calD}^{k}(A,B)
&= \Hom_{\calD}(A,B_0[k+1])  \\
&= \Hom_{\calD}^{k+1}(A,B_0) = 0,
\end{align*}
since $k+1\leq0$. For $k=0$, we have
\begin{align*}
\Hom_{\calD}(A,B)
&= \Hom_{\calD}(A,B_0[1])  \\
&= M_0(A,B_0) \\
&=0
\end{align*}
\end{enumerate}
\end{proof}

We conclude this subsection with a dual version of Proposition \ref{popp} by considering a well-known construction for stability conditions in the opposite category.

\begin{corollary}\label{poppdual}
Let $\calD=\langle\calA,\calB\rangle$ be a triangulated category with a semiorthogonal decomposition, and let $
\sigma_{\calA}=(\calA_0,Z_{\calA}), \sigma_{\calB}=(\calB_0,Z_{\calB})$
be stability conditions on the components. If the following conditions are satisfied:
\begin{enumerate}[ leftmargin=0pt, labelsep=0.5em, itemindent=!, align=left ]
   \item [\hypertarget{ass:OP1}{\textup{(OP1)}}] $\Hom_{\calD}^{\leq 0}(\calP_{\calA}[0,1),\calP_{\calB}[0,1))=0$;
   \item [\hypertarget{ass:OP2}{\textup{(OP2)}}]
    There exists $\alpha\in(0,1)$ such that, for every nonzero
     $\sigma_{\calA}$-semistable object $A\in \calP_{\calA}[0,1)$ and every nonzero
     $\sigma_{\calB}$-semistable object $B\in \calP_{\calB}[0,1)$, we have
     \begin{align*}
      \phi_{\calB}(B)<\alpha\leq\phi_{\calA}(A)\quad\Longrightarrow\quad \Hom_{\calD}(A,B[1])=0.
\end{align*}
\end{enumerate}
Then the stability conditions $\sigma_{\calA}$ and $\sigma_{\calB}$ induce a stability condition on $\calD$.
\end{corollary}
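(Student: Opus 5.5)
We reduce Corollary \ref{poppdual} to Proposition \ref{gluingg} and Proposition \ref{popp} via the opposite category. The standard construction: if $\calD$ is triangulated, then $\calD^{\opp}$ is triangulated with shift $[1]^{\opp}=[-1]$, and a stability condition $\sigma=(Z,\calP)$ on $\calD$ induces $\sigma^{\opp}=(Z^{\opp},\calP^{\opp})$ on $\calD^{\opp}$ with $Z^{\opp}=\overline{Z}$ (equivalently $-Z$ after rotation) and $\calP^{\opp}(\phi)=\calP(-\phi)$. Composing with a shift, we may renormalise so that $\calP^{\opp}(\phi)=\calP(1-\phi)$, i.e.\ $Z^{\opp}=-\overline{Z}$. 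Under this identification $\calP^{\opp}(0,1]=\calP[0,1)$, and the HN filtrations, the support property (same lattice and norm, $|Z^{\opp}|=|Z|$), and local finiteness all transfer directly. Moreover, a semiorthogonal decomposition $\calD=\langle\calA,\calB\rangle$ gives $\calD^{\opp}=\langle\calB^{\opp},\calA^{\opp}\rangle$, since $\Hom_{\calD^{\opp}}(\calA^{\opp},\calB^{\opp})=\Hom_{\calD}(\calB,\calA)=0$.

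We then check that the hypotheses translate. For (CP1) on $\calD^{\opp}=\langle\calB^{\opp},\calA^{\opp}\rangle$ we need
\begin{align*}
\Hom^{\le0}_{\calD^{\opp}}(\calP^{\opp}_{\calB}(0,1],\calP^{\opp}_{\calA}(0,1])=\Hom_{\calD}(\calP_{\calA}[0,1),\calP_{\calB}[0,1)[\,i\,])_{i\le 0}=0.
\end{align*}
This holds because $\Hom_{\calD^{\opp}}(B,A[i]^{\opp})=\Hom_{\calD}(A[-i],B)=\Hom_{\calD}(A,B[i])$, so it is exactly (OP1). For (CP3), phases transform as $\phi^{\opp}=1-\phi$. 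Setting $\alpha'=1-\alpha\in(0,1)$, the condition for $\sigma^{\opp}_{\calB}$-semistable $B$ and $\sigma^{\opp}_{\calA}$-semistable $A$ in the opposite hearts reads $\phi^{\opp}(A)\le\alpha'<\phi^{\opp}(B)$, i.e.\ $1-\phi_{\calA}(A)\le 1-\alpha<1-\phi_{\calB}(B)$, which is $\phi_{\calB}(B)<\alpha\le\phi_{\calA}(A)$. The required vanishing is $\Hom_{\calD^{\opp}}(B,A[1]^{\opp})=\Hom_{\calD}(A[-1],B)=\Hom_{\calD}(A,B[1])=0$, which is (OP2). Here $M_0$ for the opposite glued heart is $\Ext^1$ in that heart, identified with $\Hom_{\calD^{\opp}}(B,A[1])$ as in Corollary \ref{relatinghearts}.

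Proposition \ref{popp} then gives (CP2) on $\calD^{\opp}$, and Proposition \ref{gluingg} gives a glued stability condition on $\calD^{\opp}$. Taking the opposite again, using $(\calD^{\opp})^{\opp}=\calD$ and undoing the renormalising shift, yields a stability condition on $\calD$ whose restrictions to the components are $\sigma_{\calA}$ and $\sigma_{\calB}$. Its heart is the extension closure of $\calP_{\calA}[0,1)$ and $\calP_{\calB}[0,1)$, and its central charge is $Z_{\calA}+Z_{\calB}$ up to the conjugation that is undone.

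The main obstacle is the careful bookkeeping of the opposite stability condition: checking that the half-open intervals flip correctly, that $\phi\mapsto1-\phi$ turns $\le\alpha<$ into $<\alpha\le$, and that the ordering of the semiorthogonal components swaps, so that the $\Hom^{\le0}$ direction in (CP1) matches (OP1). The remaining steps are formal.
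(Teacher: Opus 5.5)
Your proposal is correct and follows essentially the same route as the paper. Both pass to $\calD^{\opp}=\langle\calB^{\opp},\calA^{\opp}\rangle$ with $\calP^{\opp}_{\calC}(\phi)=\calP_{\calC}(1-\phi)^{\opp}$ and $Z^{\opp}_{\calC}=-\overline{Z_{\calC}}$, check that (OP1) becomes (CP1) and (OP2) becomes (CP3) with $1-\alpha$, apply Propositions \ref{popp} and \ref{gluingg}, and dualise back to $(\calP[0,1),Z_{\calA}+Z_{\calB})$. The only slip is the parenthetical ``equivalently $-Z$ after rotation'', which should be $-\overline{Z}$ since conjugation is not a rotation; nothing depends on it, because you use the correct $-\overline{Z}$ afterwards.
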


\begin{proof}
We apply Proposition \ref{popp} to the opposite decomposition $\calD^{\opp}=\langle\calB^{\opp},\calA^{\opp}\rangle$  We note that shift functor in $\calD^{\opp}$ is given by $[-1]$. Let $\calP_{\calA}$ and $\calP_{\calB}$ be the slicings of $\sigma_{\calA}$ and $\sigma_{\calB}$. For $\calC\in\{\calA,\calB\}$, we define
\begin{align*}
    Z_{\calC}^{\opp}(E)\coloneqq-\overline{Z_{\calC}(E)},
    \qquad
    \calP_{\calC}^{\opp}(\phi)\coloneqq\calP_{\calC}(1-\phi)^{\opp}.
\end{align*}
Thus $\sigma_{\calC}^{\opp}=(Z_{\calC}^{\opp},\calP_{\calC}^{\opp})$ is the opposite stability condition shifted by one, and
\begin{align*}
    E\text{ is $\sigma_{\calC}$-semistable of phase }
    \phi_{\calC}(E)\quad\Longleftrightarrow\quad E \text{ is $\sigma_{\calC}^{\opp}$-semistable of phase }\phi^{\opp}_{\calC}(E)\coloneqq1-\phi_{\calC}(E).
\end{align*}
The glued $t$-structure on $\calD^{\opp}$, which exists because of \hyperlink{ass:OP1}{\textup{(OP1)}}, has heart equivalent to
\begin{align*}
\calP^{\opp}(0,1] \coloneqq \calP_{\calB}[0,1)^{\opp} \times_{M_0^{\opp}} \calP_{\calA}[0,1)^{\opp}=\calP_{\calB}^{\opp}(0,1] \times_{M_0^{\opp}} \calP_{\calA}^{\opp}(0,1]
\end{align*}
with the bimodule given by
\begin{align*}
M_0^{\opp}(B,A)
&\coloneqq \Hom_{\calD^{\opp}}(B,A[1])\\
&\simeq \Hom_{\calD}(A[-1],B) \simeq \Hom_{\calD}(A,B[1])
\end{align*}
which satisfies Assumption \ref{exaxtas}. 
Let $\alpha\in(0,1)$ be as in \hyperlink{ass:OP2}{\textup{(OP2)}} and let $A\in\calP_{\calA}[0,1)$ and $B\in\calP_{\calB}[0,1)$ be nonzero semistable objects such that
\begin{align*} 
  \phi_{\calB}(B)<\alpha\leq\phi_{\calA}(A).
\end{align*}
This implies that
\begin{align*} 
\phi_{\calA}^{\opp}(A)\leq 1-\alpha<\phi_{\calB}^{\opp}(B). 
\end{align*}
Therefore, \hyperlink{ass:CP3}{\textup{(CP3)}} is satisfied for $1-\alpha \in (0,1)$ and by Proposition \ref{gluingg}
 we get a stability condition on $\calD^{\opp}$ with heart $\calP_{\calB}[0,1)^{\opp} \times_{M_0^{\opp}} \calP_{\calA}[0,1)^{\opp}$ and central charge
\begin{align*}
    Z_{\calB}^{\opp}+Z_{\calA}^{\opp}
    =-\overline{Z_{\calB}}-\overline{Z_{\calA}}
    =-\overline{Z_{\calA}+Z_{\calB}}.
\end{align*}
We pass once more to the opposite category to obtain a stability condition on $\calD$ given by
$(\calP[0,1)=(\calP^{\opp}(0,1])^\opp,Z_{\calA}+Z_{\calB})$.
\end{proof}

\subsection{The support property given a phase gap for glued semistable objects} \label{noncpcase}

In this subsection, we study glued pre-stability conditions which are not covered by \hyperlink{ass:CP2}{\textup{(CP2)}}. Although \hyperlink{ass:CP1}{\textup{(CP1)}} ensures that the $t$-structures induced by the two component stability conditions glue, \hyperlink{ass:CP2}{\textup{(CP2)}} need not hold. 

This occurs, for example, for the category of holomorphic triples $\mathcal{T}_C=D^b(\operatorname{TCoh}(C))$. In \cite[Example $3.34(2)$ and Corollary $3.36$]{holomorphic}, glued pre-stability conditions are constructed which do not satisfy \hyperlink{ass:CP2}{\textup{(CP2)}} for any $\alpha\in(0,1)$. 

We therefore provide a more general criterion for the support property, assuming that the glued pair is already a pre-stability condition. The Harder-Narasimhan property may follow ,for instance, if the glued central charge is discrete and the glued heart is Noetherian as in ~\cite[Proposition 4.10]{lecturesmacri}.
\begin{theorem}\label{sectorbound}
Let $\calD=\langle\calA,\calB\rangle$ be a triangulated category with a semiorthogonal decomposition, and let $\sigma_{\calA}=(\calA_0,Z_{\calA})$ and $\sigma_{\calB}=(\calB_0,Z_{\calB})$ be stability conditions on the components such that the $t$-structures glue with heart $\glue=\calA_0\times_{M_0}\calB_0$. 

Assume that $\sigma_{\gl}=(\glue,Z_{\calA}+Z_{\calB})$ is a pre-stability condition and that there exists a real number $ \epsilon \in (0,1)$ such that the following condition is satisfied:
\begin{enumerate} 
 \item[\hypertarget{ass:S}{\textup{(S)}}] For every $\sigma_{\gl}$-semistable object $E=(A,B,\mu)\in\glue$ with $A\neq0$, $B\neq0$, and $\mu\neq0$, we have
\begin{align*}
    \phi_{\calA}^{+}(A)-\phi_{\calB}^{-}(B)\leq1-\epsilon.
\end{align*}
\end{enumerate}
Then $\sigma_{\gl}$ is a stability condition on $\calD$.
\end{theorem}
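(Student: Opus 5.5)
We follow the strategy of Proposition \ref{gluingg}. We take the lattice $\Lambda=\Lambda_{\calA}\oplus\Lambda_{\calB}$, the map $v(A,B,\mu)=(u_{\calA}([A]),u_{\calB}([B]))$ and the sum norm. It suffices to bound $\|v(E)\|\le C|Z_{\gl}(E)|$ for every $\sigma_{\gl}$-semistable $E=(A,B,\mu)$ in the heart $\glue$. The support property is invariant under shift, and every semistable object of $\calD$ is a shift of one in the heart. Writing $A'_1,\dots,A'_r$ and $B'_1,\dots,B'_s$ for the HN factors of $A$ and $B$, the component support properties give
\begin{align*}
\|v(E)\|\le \max\{C_{\calA},C_{\calB}\}\Big(\sum_i|Z_{\calA}(A'_i)|+\sum_j|Z_{\calB}(B'_j)|\Big).
\end{align*}
So the whole task is a lower bound $|Z_{\gl}(E)|\ge c\,(\sum_i|Z_{\calA}(A'_i)|+\sum_j|Z_{\calB}(B'_j)|)$ with $c>0$ depending only on $\epsilon$. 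We obtain it from Lemma \ref{cone} once all the numbers $Z_{\calA}(A'_i)$ and $Z_{\calB}(B'_j)$ are shown to lie in a sector of width at most $(1-\epsilon)\pi$.

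\textbf{Degenerate cases.} If $B=0$, then $E=(A,0,0)$ lies in $\calA_0$. Any subobject of $A$ in $\calA_0$ gives a subobject of $E$ in $\glue$, so semistability of $E$ forces $A$ to be $\sigma_{\calA}$-semistable, and the component support property applies directly. The case $A=0$ is symmetric. If $\mu=0$, then $E=(A,0,0)\oplus(0,B,0)$. A semistable object that splits as a direct sum has semistable summands of the same phase, and the cone is then trivial.

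\textbf{Main case: $A,B,\mu\neq0$.} All HN phases of $A$ and of $B$ lie in $(0,1]$. Hypothesis (S) gives $\phi^+_{\calA}(A)\le\phi^-_{\calB}(B)+1-\epsilon$, and this controls only one direction. To bound the total spread we will show that $\phi^-_{\calB}(B)\le\phi(E)\le\phi^+_{\calA}(A)$, using the torsion pair structure:
\begin{itemize}
\item[(i)] $(0,B,0)$ is a subobject of $E$, so $\phi(Z_{\gl}(B))\le\phi(E)$. Hence $\phi^-_{\calB}(B)\le\phi(E)$.
\item[(ii)] $(A,0,0)$ is a quotient of $E$, so $\phi(E)\le\phi(Z_{\calA}(A))\le\phi^+_{\calA}(A)$.
\end{itemize}
Consequently every phase of an $A'_i$ lies in $[\phi^-_{\calA}(A),\phi^+_{\calA}(A)]\subset(0,\phi^-_{\calB}(B)+1-\epsilon]$, and every phase of a $B'_j$ lies in $[\phi^-_{\calB}(B),1]$.

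This alone does not give width $<1$, because $\phi^-_{\calA}(A)$ may be near $0$ while $\phi^+_{\calB}(B)$ is near $1$. We expect this to be the main obstacle. Our first attempt is to refine (i) and (ii) to the extreme HN factors:
\begin{itemize}
\item[(iii)] The last HN quotient $A\to A'_{r}$ of minimal phase induces a quotient $E\to(A'_r,0,0)$. Semistability then gives $\phi^-_{\calA}(A)\ge\phi(E)$.
\item[(iv)] The first HN factor $B'_1\hookrightarrow B$ of maximal phase gives a subobject $(0,B'_1,0)$. Hence $\phi^+_{\calB}(B)\le\phi(E)$.
\end{itemize}
Combining (iii) and (iv), all $B$-phases lie in $[\phi^-_{\calB}(B),\phi(E)]$ and all $A$-phases lie in $[\phi(E),\phi^+_{\calA}(A)]$. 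The whole collection therefore lies in $[\phi^-_{\calB}(B),\phi^+_{\calA}(A)]$, whose width is at most $1-\epsilon$ by (S).

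Lemma \ref{cone} then yields the lower bound with $c=\cos(\pi(1-\epsilon)/2)=\sin(\pi\epsilon/2)>0$. Together with the first display this gives $\|v(E)\|\le\max\{C_{\calA},C_{\calB}\}\sin(\pi\epsilon/2)^{-1}|Z_{\gl}(E)|$, with a constant uniform over all semistable $E$. This is the support property.
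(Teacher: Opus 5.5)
Your proposal is correct and follows essentially the same route as the paper. It uses the same lattice $\Lambda_{\calA}\oplus\Lambda_{\calB}$ with the sum norm, the same three degenerate cases, and in the main case the same subobject $(0,B'_1,0)$ and quotient $(A'_r,0,0)$ to obtain $\phi^{+}_{\calB}(B)\le\phi(E)\le\phi^{-}_{\calA}(A)$, so that (S) confines all HN phases to an interval of width at most $1-\epsilon$ and Lemma~\ref{cone} gives the constant $\sin(\pi\epsilon/2)^{-1}\max\{C_{\calA},C_{\calB}\}$; your preliminary bounds (i)--(ii) are superseded by (iii)--(iv) and can simply be dropped.
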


\begin{proof}
The $t$-structures glue by assumption, so we can consider the glued heart
\begin{align*}
    \glue=\calA_0\times_{M_0}\calB_0
\end{align*}
together with the glued central charge $Z_{\gl}\coloneqq Z_{\calA}+Z_{\calB},$
which is a stability function on $\glue$ by Remark \ref{boundedfunctions}. By assumption, $\sigma_{\gl}=(\glue,Z_{\gl})$ is a pre-stability condition, so it remains to prove the support property. The stability conditions $\sigma_{\calA}$ and $\sigma_{\calB}$ satisfy the support property with respect to lattices $\Lambda_{\calA},\Lambda_{\calB}$ and factorizations
\begin{align*}
    Z_{\calA} &\colon K_0(\calA)\xrightarrow{u_{\calA}}\Lambda_{\calA}\xrightarrow{g_{\calA}}\C,\\
    Z_{\calB} &\colon K_0(\calB)\xrightarrow{u_{\calB}}\Lambda_{\calB}\xrightarrow{g_{\calB}}\C.
\end{align*}
Thus, there exist norms $\|\cdot\|_{\calA},\|\cdot\|_{\calB}$ on $\Lambda_{\calA}\otimes\R,\Lambda_{\calB}\otimes\R$, respectively, and constants $C_{\calA},C_{\calB}>0$ such that
\begin{align*}
    \|u_{\calA}([A])\|_{\calA} &\leq C_{\calA}|Z_{\calA}(A)| \qquad \text{for all $\sigma_{\calA}$-semistable } A\in\calA_0,\\
    \|u_{\calB}([B])\|_{\calB} &\leq C_{\calB}|Z_{\calB}(B)| \qquad \text{for all $\sigma_{\calB}$-semistable } B\in\calB_0.
\end{align*}
We consider the lattice
\begin{align*}
    \Lambda\coloneqq\Lambda_{\calA}\oplus\Lambda_{\calB},\qquad v(E)\coloneqq\left(u_{\calA}([A]),u_{\calB}([B])\right),
\end{align*}
and fix the norm $\|(\lambda_{\calA},\lambda_{\calB})\|\coloneqq\|\lambda_{\calA}\|_{\calA}+\|\lambda_{\calB}\|_{\calB}$ on $\Lambda_{\R}$. We will prove that there exists a uniform constant $C>0$ such that
\begin{align*}
    \|v(E)\|\leq C|Z_{\gl}(E)|
\end{align*}
for every $\sigma_{\gl}$-semistable object $E$. Since every semistable object is in the heart after a shift, it is enough to prove the support property for semistable objects $E=(A,B,\mu)$ in $\glue$. We split into four cases.

\begin{enumerate}
\item \textbf{Case $A=0$.} Then $E=(0,B,0)$ lies entirely in $\calB$. Any destabilizing subobject $B'\hookrightarrow B$ in $\calB_0$ gives a subobject
\begin{align*}
    (0,B',0)\hookrightarrow(0,B,0)
\end{align*}
in the glued heart, which would contradict the $\sigma_{\gl}$-semistability of $E$. Thus, $B$ is $\sigma_{\calB}$-semistable and
\begin{align*}
    \|v(E)\|=\|u_{\calB}([B])\|_{\calB}\leq C_{\calB}|Z_{\calB}(B)|=C_{\calB}|Z_{\gl}(E)|.
\end{align*}

\item \textbf{Case $B=0$.} Then $E=(A,0,0)$ lies entirely in $\calA$, and similarly $A$ is $\sigma_{\calA}$-semistable. Therefore,
\begin{align*}
    \|v(E)\|=\|u_{\calA}([A])\|_{\calA}\leq C_{\calA}|Z_{\calA}(A)|=C_{\calA}|Z_{\gl}(E)|.
\end{align*}

\item \textbf{Case $A\neq0$, $B\neq0$, $\mu=0$.} Since $\mu=0$, the object $E$ decomposes as
\begin{align*}
    E=(A,0,0)\oplus(0,B,0)
\end{align*}
in $\glue$. Since $E$ is $\sigma_{\gl}$-semistable, both summands are semistable of the same phase. In particular, $A$ and $B$ are semistable in $\calA_0$ and $\calB_0$, respectively, and $Z_{\calA}(A)$ and $Z_{\calB}(B)$ have the same argument. Therefore,
\begin{align*}
    |Z_{\gl}(E)|=|Z_{\calA}(A)+Z_{\calB}(B)|=|Z_{\calA}(A)|+|Z_{\calB}(B)|,
\end{align*}
and as a result,
\begin{align*}
    \|v(E)\|&\leq C_{\calA}|Z_{\calA}(A)|+C_{\calB}|Z_{\calB}(B)|\\
    &\leq\max\{C_{\calA},C_{\calB}\}|Z_{\gl}(E)|.
\end{align*}

\item \textbf{Case $A\neq0$, $B\neq0$, $\mu\neq0$.} We consider the HN filtrations of $A$ and $B$ with respect to $\sigma_{\calA}$ and $\sigma_{\calB}$,
\begin{align*}
    0=A_0\subset A_1\subset\ldots\subset A_r=A,\qquad 0=B_0\subset B_1\subset\ldots\subset B_m=B,
\end{align*}
and set
\begin{align*}
    m(A)\coloneqq\sum_{i=1}^{r}|Z_{\calA}(A_i/A_{i-1})|,\qquad m(B)\coloneqq\sum_{j=1}^{m}|Z_{\calB}(B_j/B_{j-1})|.
\end{align*}
Each HN factor is semistable in its component, so the support properties of $\sigma_{\calA}$ and $\sigma_{\calB}$ give
\begin{align*}
    \|u_{\calA}([A])\|_{\calA}&\leq\sum_{i=1}^{r}\|u_{\calA}([A_i/A_{i-1}])\|_{\calA}\leq C_{\calA}m(A),\\
    \|u_{\calB}([B])\|_{\calB}&\leq\sum_{j=1}^{m}\|u_{\calB}([B_j/B_{j-1}])\|_{\calB}\leq C_{\calB}m(B).
\end{align*}
We note that all the HN factors of $A$ and $B$ have phases in the interval
\begin{align*}
    [\phi_{\calB}^{-}(B),\phi_{\calA}^{+}(A)].
\end{align*}
Indeed, let $A'_i$ and $B'_j$ be arbitrary HN factors of $A$ and $B$, respectively. The subobject $(0,B_1,0)\subset E$ and the quotient $ E\twoheadrightarrow(A,0,0)\twoheadrightarrow(A/A_{r-1},0,0)$ induce the inequality
\begin{align*}
\phi_{\calB}^{+}(B)\leq\phi(E)\leq\phi_{\calA}^{-}(A).
\end{align*}
Therefore,
\begin{align*}
\phi_{\calB}^{-}(B)\leq\phi(B'_j)\leq\phi_{\calB}^{+}(B)\leq\phi(E)\leq\phi_{\calA}^{-}(A)\leq\phi(A'_i)\leq\phi_{\calA}^{+}(A).
\end{align*}
This interval has length at most $1-\epsilon$ by condition \hyperlink{ass:S}{\textup{(S)}}. Their central charges therefore lie in a common sector of width at most $\pi(1-\epsilon)<\pi$. We set
\begin{align*}
    s\coloneqq\cos\left(\cfrac{\pi(1-\epsilon)}{2}\right)=\sin\left(\cfrac{\pi\epsilon}{2}\right)>0.
\end{align*}
We apply Lemma \ref{cone} to the central charges of the HN factors of $A$ and $B$, and obtain
\begin{align*}
    |Z_{\gl}(E)|=\left|\sum_{i=1}^{r}Z_{\calA}(A_i/A_{i-1})+\sum_{j=1}^{m}Z_{\calB}(B_j/B_{j-1})\right|\geq s(m(A)+m(B)).
\end{align*}
Finally,
\begin{align*}
    \|v(E)\|&=\|u_{\calA}([A])\|_{\calA}+\|u_{\calB}([B])\|_{\calB}\\
    &\leq C_{\calA}m(A)+C_{\calB}m(B)\\
    &\leq\max\{C_{\calA},C_{\calB}\}(m(A)+m(B))\\
    &\leq\frac{1}{s}\max\{C_{\calA},C_{\calB}\}|Z_{\gl}(E)|.
\end{align*}
\end{enumerate}

Since $\epsilon\in(0,1)$, the constant $C\coloneqq\frac{1}{s}\max\{C_{\calA},C_{\calB}\}$ 
is well-defined and dominates the constants obtained in all four cases. As a result,
\begin{align*}
    \|v(E)\|\leq C|Z_{\gl}(E)|
\end{align*}
for every $\sigma_{\gl}$-semistable object $E$.
\end{proof}
\begin{remark}
    One can check that condition \hyperlink{ass:S}{\textup{(S)}} is satisfied in the setting of \cite{holomorphic} when condition \hyperlink{ass:CP2}{\textup{(CP2)}} fails for every $\alpha \in (0,1)$. We omit the proof since the support property is already established in this case.
\end{remark}
\section{Applications} 

We continue by providing an example of a construction of dg-comma categories, in which the assumptions of the usual Collins-Polishchuk gluing hold with the role of condition \hyperlink{ass:CP3}{\textup{(CP3)}} highlighted. This construction recovers and generalizes augmentations of curves which were introduced in \cite{augmentations} and provides a wide range of semiorthogonal decompositions; see Example \ref{coh1} and Remark \ref{exremark}. In Section \ref{mutationsection} we will examine the gluing of stability conditions under the mutation of the constructed semiorthogonal decompositions.

\subsection{Dg-comma categories} \label{dgcommasec}
Let $k$ be a field and $A$ a finite-dimensional $k$-algebra. In 
Lemma \ref{centralcharge1}, we show that any stability condition $\sigma_A=(\modd A, Z_{\calA})$ on $\bounded(\modd A)$ with $Z_{\calA}(M)\in\R_{<0}$ for every nonzero
$M\in \modd A$ takes an explicit form determined by positive reals 
$\lambda_1, \ldots, \lambda_s$ indexed by the simple $A$-modules, with 
central charge determined by Jordan--Hölder multiplicities.

We will prove that any such stability condition 
$\sigma_A$ on $\bounded(\modd A)$ and a stability condition $\sigma_{\calB}$ 
on a proper pre-triangulated dg-category $[\calD]$ together a nonzero object $E_0$ lying in the aisle $[\calD]^{\leq0}$ of the $t$-structure induced by $\sigma_{\calB}$, glue to a stability condition on the dg-comma category $(\idd \downarrow 
\Hom(E_0,-))$ as in Definition \ref{gluedef}. This relates to a range of examples including square root stacks and coherent systems as mentioned 
in \cite{augmentations} in the case of augmented curves.

We will use the following special case of \cite[Lemma 2.22]{mmp}, in which every nonzero object of the heart has phase one.

\begin{lemma}\label{centralcharge1}
Let $\sigma = (\modd A, Z)$ be a stability condition on $\bounded(\modd A)$ 
with heart $\modd A$ such that $Z(M) \in \R_{<0}$ for all nonzero $M \in 
\modd A$. Let $S_1, \ldots, S_s$ be a complete set of representatives of the 
isomorphism classes of simple $A$-modules. Then there exist positive real 
numbers $\lambda_1, \ldots, \lambda_s > 0$ such that
\begin{align*}
    Z(M) = -\sum_{i=1}^s \lambda_i\, [M  \colon  S_i],
\end{align*}
where $[M  \colon  S_i]$ denotes the multiplicity of $S_i$ in the Jordan--Hölder 
filtration of $M$. 

Conversely, for any choice of $\lambda_1, \ldots, \lambda_s > 0$, the formula 
above defines such a stability condition on $\bounded(\modd A)$.
\end{lemma}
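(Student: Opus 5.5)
The forward direction starts from the fact that $\bounded(\modd A)$ has the bounded heart $\modd A$, so $K_0(\bounded(\modd A))\cong K_0(\modd A)$. Since $A$ is finite-dimensional, every module has finite length, and Jordan--Hölder gives $K_0(\modd A)\cong\Z^s$ with basis $[S_1],\ldots,[S_s]$ and $[M]=\sum_i [M\colon S_i][S_i]$. Additivity of $Z$ then yields
\begin{align*}
Z(M)=\sum_{i=1}^s [M\colon S_i]\,Z(S_i).
\end{align*}
We set $\lambda_i\coloneqq -Z(S_i)$. Each $S_i$ is a nonzero module, so the hypothesis $Z(S_i)\in\R_{<0}$ gives $\lambda_i>0$, and this is the claimed formula.

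The converse direction proceeds in the following steps.

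\begin{enumerate}
\item \emph{Stability function.} Given $\lambda_i>0$, define $Z$ on $K_0(\modd A)\cong\Z^s$ by $Z([S_i])=-\lambda_i$ and extend linearly. For $M\neq 0$ some multiplicity $[M\colon S_i]$ is positive, so $Z(M)\in\R_{<0}$. Hence $Z$ is a stability function on $\modd A$, and every nonzero object has phase $1$.

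\item \emph{Semistable objects and HN filtrations.} Since every nonzero object has phase $1$, every object of $\modd A$ is semistable. The HN filtration is therefore trivial, and the HN property holds automatically. Alternatively, one can invoke finite length of $\modd A$ together with discreteness of the image of $Z$.

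\item \emph{Support property.} Take $\Lambda=\Z^s$ via $u(M)=([M\colon S_i])_i$ and the $\ell^1$ norm. For a semistable $M$ in the heart, write $\lambda_{\min}=\min_i\lambda_i$; then
\begin{align*}
\|u(M)\|=\sum_i [M\colon S_i]\le \lambda_{\min}^{-1}\sum_i\lambda_i[M\colon S_i]=\lambda_{\min}^{-1}|Z(M)|.
\end{align*}
Shifts of heart objects are handled by the same estimate, since both sides change only by a sign.
\end{enumerate}

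The only point needing care is the identification $K_0(\bounded(\modd A))=\bigoplus_i\Z[S_i]$. This rests on finite length of finitely generated modules over a finite-dimensional algebra and on boundedness of the $t$-structure, and both are standard. Beyond that, I expect no genuine obstacle, since the statement is a special case of \cite[Lemma 2.22]{mmp}.
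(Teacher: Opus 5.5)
Your proof is correct and follows the paper's argument. Both identify $K_0(\bounded(\modd A))$ with $\bigoplus_i\Z[S_i]$ via Jordan--Hölder and set $\lambda_i=-Z(S_i)$; for the converse, both check that $Z$ is a stability function, that HN filtrations exist, and that the support property holds. The only difference is that where the paper cites \cite[Lemma 2.4]{bridgeog} and \cite[Lemma 2.22]{mmp}, you verify these points directly: every object is semistable of phase $1$, and $\|u(M)\|_1\le\lambda_{\min}^{-1}|Z(M)|$. This is a valid self-contained replacement for the citations.
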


\begin{proof}
Since $\modd A$ is of finite length, $K_0(\modd A) \simeq K_0(\bounded(\modd A))$ is 
freely generated by the classes $[S_1], \ldots, [S_s]$ and $Z$ is determined 
by its values $Z(S_i)$. The condition $Z(S_i) \in \R_{<0}$ forces $Z(S_i) = 
-\lambda_i$ for some $\lambda_i > 0$, giving the required formula. 

For the converse, given $\lambda_1, \ldots, \lambda_s > 0$, we define $Z$ on 
$K_0(\modd A)$ by $Z(S_i) = -\lambda_i$ and extend linearly. For any nonzero 
$M \in \modd A$ we have $Z(M) = -\sum_i \lambda_i [M  \colon  S_i] \in \R_{<0}$ which makes
$Z$ a stability function on $\modd A$. Harder--Narasimhan filtrations 
exist by \cite[Lemma 2.4]{bridgeog}, since $\modd A$ is finite-length. The support property follows easily from \cite[Lemma 2.22]{mmp}.
\end{proof}
From now on, we fix such a stability condition $\sigma_A = (\modd A, Z_{\calA})$ on 
$\bounded(\modd A)$, with $Z_{\calA}$ given by Lemma \ref{centralcharge1} for some 
choice of $\lambda_1, \ldots, \lambda_s > 0$.

\begin{remark}\label{modstructure}
Let $A$ be as above, viewed as a dg-algebra concentrated in degree $0$. Since $A$ is finite-dimensional over $k$, it is a proper connective dg-algebra in the sense of \cite[\S 6.2]{propercon}. This means that $H^i(A)=0$ for $i>0$ and
\begin{align*}
\sum_{i\in\mathbb Z}\dim_k H^i(A)=\dim_k A<\infty.
\end{align*}
Let $\mathcal D_{\mathrm{dg}}(A)$ be a dg-enhancement of the derived category $\calD(A)$ of right dg-$A$-modules. Following \cite[\S 6.2]{propercon}, let $\bounded(A)\subset\calD(A)$ be the full triangulated subcategory consisting of dg-$A$-modules with finite-dimensional total cohomology, and let $\calD_A\subset\mathcal D_{\mathrm{dg}}(A)$ be the corresponding full dg-subcategory. The category $\calD_A$ is a natural dg-enhancement of $\bounded(A)$. Since $A$ is concentrated in degree $0$, this agrees with the usual bounded derived category of finite-dimensional right $A$-modules, and as a result,
\begin{align*}
[\calD_A]\simeq\bounded(A)\simeq\bounded(\modd A)
\end{align*}
as also mentioned in \cite[\S 2.2]{identification}. From now on, we identify $\bounded(A)$ with $\bounded(\modd A)$.

Now let $\calD$ be a pre-triangulated dg-category together with an object $E_0\in\calD$. For any $X\in\calD$, the complex $\Hom_{\calD}(E_0,X)$ carries a natural right dg-module structure over the dg-algebra $\End_{\calD}(E_0)$ via precomposition. Given a dg-algebra morphism
\begin{align*}
A\longrightarrow\End_{\calD}(E_0),
\end{align*}
restriction of scalars gives $\Hom_{\calD}(E_0,X)$ the structure of a right dg-$A$-module. Explicitly, each $\Hom_{\calD}(E_0,X)^i$ is a right $A$-module and the differential is $A$-linear. This gives a dg-functor
\begin{align*}
\Hom_{\calD}(E_0,-)\colon\calD\longrightarrow\mathcal D_{\mathrm{dg}}(A).
\end{align*}
If $\calD$ is proper, we have that $H^i(\Hom_{\calD}(E_0,X)) = 0$ for $|i| \gg 0$. Thus the dg-functor $Hom_{\calD}(E_0,-)$ takes values in $\calD_A$.
\end{remark}

\begin{lemma} \label{leftexactness}
Let $\calD$ be a proper pre-triangulated dg-category over $k$ and $\sigma_{\calB} = (\calB, Z_{\calB})$ a stability condition on $[\calD]$. We fix a nonzero object $E_0\in\calD$ which lies the aisle $[\calD]^{\leq0}$  and a $dg$-algebra morphism $A \to \End_{\calD}(E_0)$. The dg-functor
\begin{align*}
    \Hom_{\calD}(E_0,-)  \colon  \calD \rightarrow \calD_A
\end{align*}
induces a left t-exact triangulated functor
\begin{align*}
[\Hom_{\calD}(E_0,-)] \colon [\calD] \longrightarrow \bounded(\modd A)
\end{align*}
with respect to the $t$-structures induced by $\sigma_{\calB}$ and $\sigma_{A}$ on $[\calD]$ and $\bounded(\modd A)$, respectively.
\end{lemma}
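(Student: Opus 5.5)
Our plan is to verify left $t$-exactness on a generating class and then extend by extensions. The $t$-structure on $\bounded(\modd A)$ induced by $\sigma_A$ is the standard one, since its heart is $\modd A$. For a dg-module $V$, the condition $V\in\bounded(\modd A)^{\geq 0}$ simply means $H^i(V)=0$ for $i<0$.

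Write $F\coloneqq[\Hom_{\calD}(E_0,-)]$. First, $F$ is triangulated because it is induced by a dg-functor between pre-triangulated dg-categories. By Remark \ref{modstructure} it lands in $\bounded(\modd A)$, using properness of $\calD$. Moreover, $H^i$ of $\Hom_{\calD}(E_0,X)$ computes $\Hom_{[\calD]}(E_0,X[i])$, so
\begin{align*}
H^i(F(X))\cong\Hom_{[\calD]}(E_0,X[i])
\end{align*}
as $k$-vector spaces. The right $A$-module structure on the right-hand side comes from $A\to\End(E_0)$, but for vanishing we only need the underlying vector space.

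The key step is to take $X\in[\calD]^{\geq0}$ and show $H^i(F(X))=0$ for $i<0$, that is, $\Hom_{[\calD]}(E_0,X[i])=0$ for $i<0$. By hypothesis $E_0\in[\calD]^{\leq0}$. Since $i<0$ and $X\in[\calD]^{\geq 0}$, we have $X[i]\in[\calD]^{\geq -i}\subseteq[\calD]^{\geq1}$. The orthogonality axiom $\Hom([\calD]^{\leq0},[\calD]^{\geq1})=0$ of the $t$-structure then gives the vanishing. Hence $F(X)\in\bounded(\modd A)^{\geq0}$, which is exactly left $t$-exactness.

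No dévissage along HN filtrations is needed, since the orthogonality axiom applies directly to arbitrary objects of the aisle and co-aisle. I expect the main (mild) obstacle to be the identification $H^i(\Hom_{\calD}(E_0,X))\cong\Hom_{[\calD]}(E_0,X[i])$ together with the fact that the dg-module category's $t$-structure detects $\bounded(\modd A)^{\geq0}$ via cohomology. These are standard for pre-triangulated dg-categories and for $A$ concentrated in degree $0$, and I would cite Remark \ref{modstructure} for them.
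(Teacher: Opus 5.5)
Your proof is correct and follows essentially the same route as the paper. Both identify $H^i(\Hom_{\calD}(E_0,X))\cong\Hom_{[\calD]}(E_0,X[i])$ and show it vanishes for $i<0$ by the orthogonality axiom, using $E_0\in[\calD]^{\leq0}$ and $X[i]\in[\calD]^{\geq1}$. (Your opening mention of checking on a generating class and extending by extensions is unnecessary, as you note yourself later: the argument is direct.)
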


\begin{proof}
As in Remark \ref{modstructure}, 
the dg-functor \begin{align*}
    \Hom_{\calD}(E_0,-)  \colon  \calD \rightarrow \calD_A
\end{align*}
is well defined and induces a functor on homotopy categories
\begin{align*}
    [\Hom_{\calD}(E_0,-) ] \colon  [\calD] \rightarrow \bounded(\modd A).
\end{align*}
For left $t$-exactness, for any $X \in [\calD]^{\ge 0}$ and $i < 0$ we have $X[i] \in [\calD]^{\ge 1}$ and  since $E_0 \in [\calD]^{\le 0}$. By the $t$-structure axioms:
\begin{align*}
H^i(\Hom_{\calD}(E_0, X)) \cong \Hom_{[\calD]}(E_0, X[i]) = 0 \quad \text{for all } i < 0,
\end{align*}
which shows $[\Hom_{\calD}(E_0, -)]$ is left $t$-exact.
\end{proof}

\begin{definition} \label{gluedef}
We consider the diagram
 \begin{align*}
    \calD \xrightarrow{\Hom_{\calD}(E_0,-)} \calD_A \xleftarrow{\idd} \calD_A,
\end{align*}
and we define the dg-comma category $\calD_{E_0}^A\coloneqq(\idd \downarrow \Hom_{\calD}(E_0,-))$ as in Corollary \ref{commagluing}.  Following \cite{augmentations}, we call $E_0$ the gluing object and $A$ the \textit{gluing algebra}.
\end{definition} 
By Proposition \ref{SOD}, $\calD_{E_0}^A$ admits a semiorthogonal decomposition $[\calD_{E_0}^A]= \langle \bounded(\modd A), [\calD] \rangle$. Next, we fix a stability condition $\sigma_{\calB}=(\calB, Z_{\calB}) \in \Stab(\calD)$ with $E_0$ nonzero which lies in the aisle $[\calD]^{\leq0}$. Under the previous assumptions we have the following gluing result. 

\begin{proposition}  \label{augmented gluing}
The stability conditions $\sigma_A$ and $\sigma_{\calB}$ on the components of the semiorthogonal decomposition $\langle \bounded(\modd A), [\calD] \rangle$ respectively, glue to a stability condition on $[\calD_{E_0}^A]$.
\end{proposition}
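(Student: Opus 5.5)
We will deduce the statement from Proposition \ref{gluingg} by way of Proposition \ref{popp}. In this setting $\calA=\bounded(\modd A)$ carries $\sigma_A$ with heart $\calA_0=\modd A$, and $\calB=[\calD]$ carries $\sigma_{\calB}$ with heart $\calB_0$. Two things must be checked: the gluing condition \hyperlink{ass:CP1}{\textup{(CP1)}}, and condition \hyperlink{ass:CP3}{\textup{(CP3)}} for some $\alpha\in(0,1)$. Once both hold, Proposition \ref{popp} gives \hyperlink{ass:CP2}{\textup{(CP2)}}, and Proposition \ref{gluingg} produces the glued stability condition, with the support property included.

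\textbf{Step 1: \hyperlink{ass:CP1}{\textup{(CP1)}}.} The category $\calD_{E_0}^A$ is the dg-comma category $(\idd\downarrow\Hom_{\calD}(E_0,-))$. The functor $[\idd]$ is $t$-exact, hence right $t$-exact. By Lemma \ref{leftexactness}, $[\Hom_{\calD}(E_0,-)]$ is left $t$-exact for the $t$-structures induced by $\sigma_{\calB}$ and $\sigma_A$; this is where the hypothesis $E_0\in[\calD]^{\leq 0}$ enters. Corollary \ref{commagluing} then says the $t$-structures glue, which is exactly \hyperlink{ass:CP1}{\textup{(CP1)}}. The same corollary identifies the bimodule on hearts:
\begin{align*}
M_0(V,B)=\Hom_{\modd A}\bigl(V,H^0\Hom_{\calD}(E_0,B)\bigr).
\end{align*}

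\textbf{Step 2: \hyperlink{ass:CP3}{\textup{(CP3)}}.} The key observation is that $Z_{\calA}(\modd A)\subset\R_{<0}$. Hence, by Lemma \ref{centralcharge1}, every nonzero object of $\modd A$ is $\sigma_A$-semistable of phase exactly $1$. Fix any $\alpha\in(0,1)$. A pair with $\phi_{\calB}(B)\le\alpha<\phi_{\calA}(V)$ then means $V$ has phase $1$ and $B$ has phase at most $\alpha$. So \hyperlink{ass:CP3}{\textup{(CP3)}} asks for
\begin{align*}
\Hom_{[\calD_{E_0}^A]}(V,B[1])=0 \quad\text{whenever } \phi_{\calB}(B)\le\alpha.
\end{align*}
By the description above, this group is $\Hom_A(V,H^0\Hom(E_0,B))$, and it need not vanish in general.

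\textbf{Main obstacle.} The difficulty is that \hyperlink{ass:CP3}{\textup{(CP3)}} cannot be verified directly from the bimodule. My first attempt is to use the freedom in choosing $\alpha$ together with the structure of $\calB_0$. Semistable objects of phase at most $\alpha$ form a torsion-free part of $\calB_0$, but $\Hom(E_0,B)$ for such $B$ has no reason to vanish. I would therefore fall back on a second route: run Proposition \ref{popp} in its dual form, Corollary \ref{poppdual}, or equivalently use the phase-$1$ concentration of $\sigma_A$ directly in the proof of Proposition \ref{gluingg}. The point of the second route is this. A glued semistable object $E=(V,B,\mu)$ with $V\neq 0$ satisfies $\phi(E)\le\phi^-_{\calA}(V)=1$. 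If $\phi(E)<1$, the quotient $E\twoheadrightarrow (V,0,0)$ has phase $1>\phi(E)$, which is allowed, while the subobject $(0,B_1,0)$ forces $\phi^+_{\calB}(B)\le\phi(E)$. I would try to show that $\phi^-_{\calB}(B)$ is bounded below, giving condition \hyperlink{ass:SS}{\textup{(S)}} of Theorem \ref{sectorbound}. If that bound proves elusive, I would instead verify \hyperlink{ass:CP2}{\textup{(CP2)}} for $\alpha$ close to $1$, exploiting that $\calP_{\calA}(\alpha,\alpha+1]$ consists of $\modd A$ together with shifts by $[1]$ of objects of phase in $(0,\alpha]$, of which there are none. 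This reduces \hyperlink{ass:CP2}{\textup{(CP2)}} to vanishing of $\Hom^{\le 0}(\modd A,\calP_{\calB}(\alpha,\alpha+1])$. Of these alternatives, I expect this last reduction to be the most workable, and the HN property would then follow from \cite[Theorem 3.6]{gluing1}.
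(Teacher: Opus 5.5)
\textbf{There is a genuine gap: \textup{(CP3)} is never established.} Your Step 1 is correct and matches the paper. Lemma \ref{leftexactness} and Corollary \ref{commagluing} give \textup{(CP1)} and $M_0(V,B)=\Hom_{\modd A}(V,\Hom_{[\calD]}(E_0,B))$. Reducing to \textup{(CP3)} via Propositions \ref{popp} and \ref{gluingg} is also the paper's strategy. The gap is your claim in Step 2 that $\Hom_{[\calD]}(E_0,B)$ ``has no reason to vanish'' for semistable $B$ of phase at most $\alpha$. It does vanish once $\alpha$ is chosen small, and this is the one idea the proof needs. Since $E_0\neq 0$ lies in $[\calD]^{\leq 0}=\calP_{\calB}(0,\infty)$, all its HN factors have positive phase, so $\phi^-_{\calB}(E_0)>0$. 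Take $\alpha\in(0,\min\{1,\phi^-_{\calB}(E_0)\})$. For any $\sigma_{\calB}$-semistable $B\in\calB_0$ with $\phi_{\calB}(B)\le\alpha$, every HN factor of $E_0$ has phase strictly larger than $\phi_{\calB}(B)$. The Hom-vanishing axiom of the slicing then gives $\Hom_{[\calD]}(E_0,B)=0$, hence $M_0(V,B)=0$ for every $V\in\modd A$. This is exactly \textup{(CP3)}, and Propositions \ref{popp} and \ref{gluingg} finish the argument. Your proposal uses the hypothesis $E_0\in[\calD]^{\le 0}$ only for \textup{(CP1)} and never for this phase bound, which is where it really matters.

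\textbf{Your fallbacks do not close the gap.} The route through Theorem \ref{sectorbound} is not carried out. It also presupposes the HN property, which you would still have to get from \cite[Theorem 3.6]{gluing1}, i.e.\ from \textup{(CP2)}. Your preferred route, \textup{(CP2)} with $\alpha$ close to $1$, goes in the wrong direction:
\begin{itemize}
\item As you note, $\calP_{\calA}(\alpha,\alpha+1]=\modd A$.
\item The target contains $\calP_{\calB}(1,\alpha+1]=\calP_{\calB}(0,\alpha][1]$.
\item So \textup{(CP2)} requires $\Hom(V,B[1])=M_0(V,B)=0$ for all $V\in\modd A$ and $B\in\calP_{\calB}(0,\alpha]$. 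That is \textup{(CP3)} again, and it only gets harder as $\alpha$ grows.
\end{itemize}
In the motivating example it genuinely fails near $1$. Take $E_0=\calO_C$ on a curve with the standard stability condition. Then $\calO_C$ is stable of phase $1/2$ and $M_0(k,\calO_C)=H^0(C,\calO_C)\neq 0$, so \textup{(CP2)} fails for every $\alpha\ge 1/2$. Your instinct to exploit the freedom in $\alpha$ was right, but $\alpha$ must be pushed below $\phi^-_{\calB}(E_0)$, not towards $1$.
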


\begin{proof}
By Lemma \ref{leftexactness}, the functor
\begin{align*} [\Hom_{\calD}(E_0,-)]\colon[\calD]\longrightarrow\bounded(\modd A)
\end{align*}
is left $t$-exact. This implies that the $t$-structures glue by Corollary \ref{commagluing}. In particular, we can apply  Proposition \ref{gluingg} after showing that \hyperlink{ass:CP3}{\textup{(CP3)}} holds in this case. Note that $\phi^{-}(E_0)>0$ since $E_0 \in [\calD]^{\leq0}$ and we set $\alpha\in\left(0,\min\{1,\phi^-(E_0)\}\right)$. We fix a nonzero object $V\in\modd A$ is $\sigma_A$, which is semistable of phase $1$, and a nonzero $\sigma_{\calB}$-semistable object $X\in\calB_0$. If
\begin{align*}
M_0(V,X)=\Hom_{\modd A} \left(V,\Hom_{[\calD]}(E_0,X)\right)\neq0, 
\end{align*}
then $\Hom_{[\calD]}(E_0,X)\neq0$. Since $X$ is semistable, the phases satisfy
\begin{align*} \phi_{\calB}(X)\geq\phi^{-}(E_0)>\alpha. \end{align*}
Consequently,
\begin{align*}
\phi_{\calB}(X)\leq \alpha<\phi_A(V)=1\quad\Longrightarrow\quad M_0(V,X)=0. 
\end{align*}
\end{proof}

More generally, after shifting $\sigma_{\calB}$, $E_0$ lies in the aisle
$[\calD]^{\leq0}$ of the resulting $t$-structure. Then by Proposition \ref{augmented gluing} we obtain a glued stability condition.

\begin{example} \label{coh1}
Let $[\calD]=\bounded(\Coh(C))$ for a smooth proper connected curve $C$ over a
field $k$. We consider the dg-comma category
\begin{align*}
    \calD_{E_0}^A=(\idd \downarrow \Hom(E_0,-))
\end{align*}
as in Definition \ref{gluedef}. Different choices of $E_0$ give different geometric
examples. We list some of them, following \cite{augmentations}.

\begin{enumerate}
    \item For $E_0=\calO_x$, where $x\in C(k)$, we have $ \End(\calO_x)=k.$
    Thus $A=k$ and $\modd A={\Vect}$.
    In this case the corresponding gluing is equivalent to
    $\bounded(\sqrt{C,x})$, the bounded derived category of the square root
    stack $\sqrt{C,x}$, as in \cite[Lemma 3.3]{augmentations}.

    \item If $E_0=\calO_C$ is the structure sheaf, then
    \begin{align*}
        \End(\calO_C)=H^0(C,\calO_C)=k.
    \end{align*}
    Thus $A=k$ and $\modd A=\Vect$.
    In this case the dg-comma category recovers the category of coherent
    systems. In \cite[\S 4]{systems}, the stability conditions constructed by Proposition \ref{popp}
    are identified with the \textit{first type of gluing}, and some of them
    originate from tilted stability conditions. They are also related to the
    \textit{standard stability conditions} for coherent systems of
    \cite[\S5]{comma1} for suitable choices of parameters.

    The same gluing is the augmentation $\bounded(\calO,C)$ of the curve $C$
    in the sense of \cite{augmentations}. The paper gives several useful
    descriptions of this category in small genus: 
    \begin{enumerate}
        \item if $g(C)=0$, then $\bounded(\calO,C)$ is derived equivalent to
        the derived category of a quiver with no relations, 
        \cite[Lemma 3.19]{augmentations};

        \item if $g(C)=1$, then $\bounded(\calO,C)$ is derived equivalent to
        $\bounded(\sqrt{C,x})$ for a $k$-point $x\in C$, 
        \cite[Lemma 3.20]{augmentations};

        \item  if $C$ is hyperelliptic of genus $3$ and $L$ is the
        hyperelliptic line bundle, then the augmentation $a(L)$ is a
        $2$-spherical object in $\bounded(\calO,C)$, 
        \cite[Lemma 3.21]{augmentations};

        \item if $C$ is non-hyperelliptic of genus $4$ and lies on a smooth
        quadric $\mathbb P^1\times \mathbb P^1\subset \mathbb P^3$, then the
        two trigonal line bundles on $C$ give augmentations which form a
        spherical pair, \cite[Lemma 3.22]{augmentations};

        \item if $g(C)=5$ and $C$ is non-trigonal, then $\bounded(\calO,C)$ is
        equivalent to the derived category of sheaves of modules over the even Clifford
        algebra associated with the net of $C$, \cite[Proposition 3.23]{augmentations}.
    \end{enumerate}
\end{enumerate}
\end{example}

\begin{remark} \label{exremark}
In the case $E_0=\calO_C$, \cite[Lemma 3.4]{augmentations} gives a
geometric realization of the corresponding augmentation. Namely, if
$C\subset X$ is an embedding into a smooth projective variety of dimension at least $3$ such that
$\calO_X$ is exceptional, and if
\begin{align*}
    \pi \colon \widetilde X=\operatorname{Bl}_C(X)\longrightarrow X
\end{align*}
is the blow-up with exceptional divisor $i \colon E\to \widetilde X$ and projection
$p \colon E\to C$, then the admissible subcategory
\begin{align*}
    \langle \calO_{\widetilde X}, i_*p^*\bounded(\Coh(C))\rangle \subset \bounded(\Coh(\widetilde X))
\end{align*}
is equivalent to the augmentation $\bounded(\calO,C)$.

Thus the stability conditions obtained from Proposition \ref{augmented gluing} for the
dg-comma category associated with $E_0=\calO_C$ may also be viewed as
stability conditions on this admissible subcategory of
$\bounded(\Coh(\widetilde X))$.
\end{remark}

\subsection{Gluing mutated stability conditions} \label{mutationsection}

Next, we discuss mutations of semiorthogonal decompositions. The main goal of this subsection is to examine when mutated t-structures and stability conditions glue. This allows us to study different chambers of the stability manifold. We will apply these results to the setting of \S \ref{dgcommasec} for the dg-comma categories $\calD^A_{E_0}$. Let $\calD= \langle \calA,\calB \rangle$ be a semiorthogonal decomposition. Such a semiorthogonal decomposition can be mutated under suitable admissibility assumptions. We recall how the construction works following \cite[\S $2.3$]{mutations}. 
\begin{enumerate}
    \item[1)] If $\calA$ is right admissible, i.e.\ the inclusion $\calA \subseteq \calD$ admits a right adjoint $i_{\calA}^! \colon \calD \rightarrow \calA$, then we define the \emph{left mutation} functor of $\calB$ through $\calA$ by
    \begin{align*}
        \LRM_{\calA}\left(B\right)\coloneqq \Cone\left[i_{\calA}^!(B)\longrightarrow B\right], \qquad B \in \calB.
    \end{align*}
    Then we obtain the semiorthogonal decomposition
 \begin{align*}
        \calD= \langle \LRM_{\calA}(\calB), \calA \rangle.
    \end{align*}
    Moreover, $\LRM_{\calA}$ is fully faithful when restricted to $\calB$. In particular, if $(\calD^{\leq 0}_{\calB}, \calD^{\geq 0}_{\calB})$ is a $t$-structure on $\calB$, then
    \begin{align*}
    \left(\LRM_{\calA}(\calD^{\leq 0}_{\calB}),\, \LRM_{\calA}(\calD^{\geq 0}_{\calB})\right)
    \end{align*}
    is a $t$-structure on $\LRM_{\calA}(\calB)$ with heart $\LRM_{\calA}(\calB_0)$.
    \item[2)] If the inclusion $\calB \subseteq \calD$ admits a left adjoint $i_{\calB}^* \colon \calD \rightarrow \calB$, then we define the \emph{right mutation} functor of $\calA$ through $\calB$ by
    \begin{align*}
        \RRM_{\calB}(A)\coloneqq\Cone[A \longrightarrow  i_{\calB}^*(A)][-1],
        \qquad A \in \calA.
    \end{align*}
    This induces a semiorthogonal decomposition
    \begin{align*}
        \calD= \langle \calB, \RRM_{\calB}(\calA) \rangle.
    \end{align*}
    As in the left-mutation case, any t-structure on $\calA$ is mutated through the triangulated equivalence $\RRM_{\calB}\colon \calA\to \RRM_{\calB}(\calA)$ with heart $\RRM_{\calB}(\calA_0)$ on the mutated component.
\end{enumerate}
As in \cite[Remark 2.3]{augmentations}, when a smooth and proper category $\calD$ admits a semiorthogonal decomposition $
    \calD=\langle \calA,\calB\rangle$
 then it may be mutated and this corresponds to gluing $\calB$ to $\calA$ with the dual
$\calB\!-\!\calA$ bimodule.

\begin{lemma} \label{mutationgluing}
Let $ \mathcal D=\langle \calA,\, \calB\rangle$
be a semiorthogonal decomposition such that the components are equipped with t-structures $(\calA^{\leq 0}, \calA^{\geq 0}), (\calB^{\leq 0}, \calB^{\geq 0})$ and hearts $\calA_0,\calB_0$, respectively.

\begin{enumerate}
\item[\textup{(A)}] Assume that the inclusion $i_{\calA}  \colon  \calA \hookrightarrow \calD$ admits a right adjoint $i_{\calA}^! \colon \calD \rightarrow \calA$ and that there exists an integer $m$ such that
\begin{align*}
    i_{\calA}^!(\calB_0)\subseteq \calA^{\leq m}.
\end{align*}
Then, for every integer $s\geq m$, the shifted t-structure on $\LRM_{\calA}(\calB)$ with heart $\LRM_{\calA}(\calB_0[s])$ and the t-structure on $\calA$ glue to a t-structure on the semiorthogonal decomposition
\begin{align*}
\mathcal D=\langle \LRM_{\calA}\left(\calB\right),\, \calA\rangle
\end{align*}
with heart $\gl(\LRM_{\calA}(\calB_0[s]),\calA_0)$.

\item[\textup{(B)}] If the inclusion $i_{\calB} \colon \calB \hookrightarrow \calD$ admits a left adjoint $i_{\calB}^* \colon \calD \rightarrow \calB$ and there exists an integer $m$ such that
\begin{align*}
    i_{\calB}^*(\calA_0)\subseteq \calB^{\geq m}.
\end{align*}
Then, for every integer $s\leq m$, the t-structure on $\calB$ and the shifted t-structure on $\RRM_{\calB}(\calA)$ with heart $\RRM_{\calB}(\calA_0[s])$ glue to a t-structure on the semiorthogonal decomposition
\begin{align*}
\mathcal D=\langle \calB,\, \RRM_{\calB}(\calA) \rangle
\end{align*}
with heart $\gl(\calB_0,\RRM_{\calB}(\calA_0[s]))$.
\end{enumerate}
\end{lemma}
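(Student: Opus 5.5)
By Proposition \ref{gluedheart}, the whole statement reduces to verifying the gluing condition for the new decomposition. For (A) this means showing
\begin{align*}
\Hom^{\leq 0}_{\calD}\bigl(\LRM_{\calA}(\calB_0[s]),\calA_0\bigr)=0,
\end{align*}
since in $\calD=\langle \LRM_{\calA}(\calB),\calA\rangle$ the first component is $\LRM_{\calA}(\calB)$. Concretely, for $B\in\calB_0$, $A\in\calA_0$ and $k\leq 0$, we must show $\Hom_{\calD}(\LRM_{\calA}(B)[s],A[k])=0$.

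We rewrite this Hom using the defining triangle
\begin{align*}
i_{\calA}^!(B)\longrightarrow B\longrightarrow \LRM_{\calA}(B)\longrightarrow i_{\calA}^!(B)[1].
\end{align*}
Apply $\Hom_{\calD}(-,A[j])$. Semiorthogonality $\Hom(\calB,\calA)=0$ kills $\Hom(B,A[j])$ for all $j$. The long exact sequence therefore gives an isomorphism
\begin{align*}
\Hom_{\calD}(\LRM_{\calA}(B),A[j])\cong \Hom_{\calA}(i_{\calA}^!(B)[1],A[j])=\Hom_{\calA}(i_{\calA}^!(B),A[j-1]).
\end{align*}
Equivalently, $\LRM_{\calA}(B)\cong i_{\calA}^!(B)[1]$ modulo $\calB$, which is exactly the relevant statement when mapping into $\calA$.

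Now take $j=k-s$, so that $\Hom_{\calD}(\LRM_{\calA}(B)[s],A[k])\cong \Hom_{\calA}(i_{\calA}^!(B),A[k-s-1])$. By hypothesis $i_{\calA}^!(B)\in\calA^{\leq m}$. Since $k\leq0$ and $s\geq m$, we have $k-s-1\leq -m-1$, so $A[k-s-1]\in\calA_0[-m-1]\subseteq \calA^{\geq m+1}$. The $t$-structure axiom $\Hom(\calA^{\leq m},\calA^{\geq m+1})=0$ then gives vanishing. This yields the gluing condition, and Proposition \ref{gluedheart} produces the glued heart $\gl(\LRM_{\calA}(\calB_0[s]),\calA_0)$. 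Note that $\LRM_{\calA}(\calB_0[s])$ is indeed the heart of the shifted transported $t$-structure, because $\LRM_{\calA}$ restricted to $\calB$ is a triangulated equivalence onto $\LRM_{\calA}(\calB)$.

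Part (B) is dual. For the decomposition $\langle\calB,\RRM_{\calB}(\calA)\rangle$ we need $\Hom^{\leq0}_{\calD}(\calB_0,\RRM_{\calB}(A)[s])=0$ for $A\in\calA_0$. The triangle
\begin{align*}
\RRM_{\calB}(A)\longrightarrow A\longrightarrow i_{\calB}^*(A)\longrightarrow \RRM_{\calB}(A)[1]
\end{align*}
together with $\Hom(\calB,\calA)=0$ gives $\Hom(B,\RRM_{\calB}(A)[j])\cong\Hom_{\calB}(B,i_{\calB}^*(A)[j-1])$. Take $j=s+k$ with $k\leq0$. We have $B[1-s-k]\in\calB^{\leq s+k-1}\subseteq\calB^{\leq m-1}$, while $i_{\calB}^*(A)\in\calB^{\geq m}$, so this Hom vanishes. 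Alternatively, (B) follows from (A) applied in $\calD^{\opp}$, in the spirit of Corollary \ref{poppdual}.

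The main obstacle is bookkeeping rather than substance: getting the sign and direction of the shift in the cone triangle right, and confirming that the mutated decomposition has the components in the stated order so that the correct Hom direction is tested.
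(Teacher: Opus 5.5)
Your proof is correct and follows the paper's argument: you reduce to the gluing condition of Proposition \ref{gluedheart}, then apply $\Hom_{\calD}(-,A[k])$ to the defining triangle of $\LRM_{\calA}(B)$ so that semiorthogonality identifies $\Hom^{\leq 0}_{\calD}(\LRM_{\calA}(B),A)$ with $\Hom^{\leq -1}_{\calA}(i_{\calA}^!(B),A)$, and you conclude by $t$-structure vanishing; you also write out part (B) explicitly, which the paper only calls ``similar''. One cosmetic fix: $A[k-s-1]$ lies in $\calA_0[k-s-1]\subseteq\calA^{\geq s+1-k}\subseteq\calA^{\geq m+1}$, not literally in $\calA_0[-m-1]$.
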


\begin{proof}
We will prove (A) since (B) follows in a similar way. To prove that the t-structures glue, we will show that
$\Hom_{\mathcal D}^{ \leq 0}\!\left( \LRM_{\calA}\left(B\right),\, A \right)=0 $
for all $A \in \calA_0, B \in \calB_0[s]$.
We apply $\Hom_{\mathcal D}(-,\, A[k])$ to the distinguished triangle
\begin{align*}
 i_{\calA}^!\left(B\right) &\longrightarrow B \longrightarrow \LRM_{\calA}\left(B\right) \longrightarrow i_{\calA}^!\left(B\right)[1],
\end{align*}
and consider the part of the long exact sequence
\begin{align*}
\Hom_{\mathcal D}\!\left(B,A[k]\right)
\longrightarrow
\Hom_{\mathcal D}\!\left(i_{\calA}^!\left(B\right),A[k]\right)
\longrightarrow
\Hom_{\mathcal D}\!\left(\LRM_{\calA}\left(B\right)[-1],A[k]\right) 
\longrightarrow
\Hom_{\mathcal D}\!\left(B[-1],A[k]\right).
\end{align*}
Since $\Hom_{\mathcal D}^{\bullet}(B,A)=0$ by semiorthogonality for objects $B \in \calB, A \in \calA$, the outer terms vanish 
\begin{align*}
\Hom_{\mathcal D}\!\left(B,A[k]\right)=0, \qquad \Hom_{\mathcal D}\!\left(B[-1],A[k]\right)=0.
\end{align*}
This implies that the middle map is an isomorphism and specifically
\begin{align*}
\Hom_{\calA}^{\leq -1}\!\left( i_{\calA}^!\left(B\right),A \right) \cong  \Hom_{\mathcal D}^{\leq 0}\!\left( \LRM_{\calA}\left(B\right),A \right).
\end{align*}
We have that $B=B'[s]$ with $B'\in\calB_0$ and thus by assumption $
    i_{\calA}^!(B')   \in \calA^{\leq m}.$
In particular, since $s\geq m$ we have
\begin{align*}
    i_{\calA}^!(B) = i_{\calA}^!(B')[s]\in \calA^{\leq m-s}\subseteq\calA^{\leq 0}.
\end{align*}
Since $A \in \calA_0 \subset \calA^{\geq 0}$, the vanishing follows from the $t$-structure axioms. 
\end{proof}
We will apply the previous lemma in the dg-comma setting of
Section \ref{dgcommasec}. Thus
\begin{align*}
[\calD^A_{E_0}] = \langle \bounded(\modd A), [\calD]\rangle
\end{align*}
and we write
\begin{align*}
F=[\Hom_{\calD}(E_0,-)]\colon [\calD]\to \bounded(\modd A)
\end{align*}
for the left t-exact functor of Lemma \ref{leftexactness}.
The category $\bounded(\modd A)$ is equipped with the stability condition
$\sigma_A$ of Lemma \ref{centralcharge1}, so every nonzero object of
$\modd A$ has phase $1$, and $[\calD]$ is equipped with a stability condition
$\sigma_{\calB}=(\calB_0,Z_{\calB})$. In particular, $E_0$ is nonzero and lies in the aisle
$[\calD]^{\leq0}$ of the $t$-structure induced by
$\sigma_{\calB}$.

We list the assumptions needed to apply Lemma \ref{mutationgluing} in the
dg-comma setting. These assumptions will be used later in Lemma
\ref{commamutations} and Proposition \ref{finalgluing}.

\begin{assumption}\label{ass:mutation}
We consider the following assumptions on the functor $F$.
\begin{enumerate}
    \item[\hypertarget{ass:L}{\textup{(L)}}]
    There exists an integer $N \geq 0$ such that
    \begin{align*}
        F(\calB_0) \subseteq \bounded(\modd A)^{\leq N}.
    \end{align*}
    \item[\hypertarget{ass:R}{\textup{(R)}}]
    The functor $F$ admits a left adjoint at the dg level, inducing a functor
    \begin{align*}
        T \colon \bounded(\modd A) \to [\calD]
    \end{align*}
    and there exists an integer $m$ such that
    \begin{align*}
        T(\modd A)[1] \subseteq [\calD]^{\geq m}.
    \end{align*}
\end{enumerate}
\end{assumption}

\begin{lemma}\label{commamutations}
\medskip \noindent
\begin{enumerate}
\item[\textup{(A)}] The inclusion $i_{\bounded(\modd A)}\colon\bounded(\modd A)\hookrightarrow[\calD_{E_0}^A]$ admits a right adjoint given by
\begin{align*}
i_{\bounded(\modd A)}^!(V,X,\phi)=\Cone(V\longrightarrow F(X))[-1].
\end{align*}

\item[\textup{(B)}] Assume that $F$ admits a left adjoint at the dg level as in \hyperlink{ass:R}{\textup{(R)}}. Then the inclusion functor $i_{[\calD]}\colon[\calD]\hookrightarrow[\calD_{E_0}^A]$ admits a left adjoint given by \begin{align*} i_{[\calD]}^*(V,X,\phi)=\Cone(T(V)\longrightarrow X), \end{align*} where the map $T(V)\to X$ is adjoint to $\phi\colon V\to F(X)$.
\end{enumerate}
\end{lemma}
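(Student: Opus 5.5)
The plan is to identify both adjoints by checking the relevant Hom vanishing through the semiorthogonal decomposition triangle of an object of the dg-comma category. Write $[\calD_{E_0}^A]=\langle \bounded(\modd A),[\calD]\rangle$. By Proposition \ref{SOD} and the description of objects as triples (Lemma \ref{objecttripleSOD}), an object $E=(V,X,\phi)$ with $\phi\colon V\to F(X)$ sits in a distinguished triangle
\begin{align*}
(0,X,0)\longrightarrow (V,X,\phi)\longrightarrow (V,0,0)\longrightarrow (0,X,0)[1].
\end{align*}
Its connecting morphism $V\to X[1]$ corresponds to $\phi$ under the gluing bifunctor $\Hom(V,X[1])\cong \Hom_{\bounded(\modd A)}(V,F(X))$. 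The last isomorphism comes from the dg-comma bimodule $M(V,X)=\Hom(V,F(X))$ together with the Hom computation in the proof of Corollary \ref{heartshearts}, which gives $\Hom((V,0,0),(0,X,0)[1])\simeq H^0M(V,X)$. In particular the gluing functor of the decomposition is $\phi_{\mathrm{gl}}=F$.

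For (A) I will construct the right adjoint through the general formula: $i^!_{\calA}$ is the first component of the decomposition for $\langle [\calD],\bounded(\modd A)\rangle$ when it exists. Equivalently, I will compute $\Hom((W,0,0),(V,X,\phi))$ directly from the dg Hom complex. It is $\Hom^\bullet(W,V)\oplus M^{\bullet-1}(W,X)$ with differential twisted by $\phi$, so this complex is exactly the cone (shifted by $-1$) of $\phi_*\colon \Hom^\bullet(W,V)\to\Hom^\bullet(W,F(X))$. That complex is $\Hom^\bullet(W,\Cone(V\to F(X))[-1])$. Taking $H^0$ yields a natural isomorphism
\begin{align*}
\Hom((W,0,0),E)\cong \Hom_{\bounded(\modd A)}\bigl(W,\Cone(V\xrightarrow{\phi}F(X))[-1]\bigr).
\end{align*}
Naturality in $E$ follows from functoriality of the cone at the dg level, so Yoneda gives the right adjoint. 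The main care point is here: I must work at the dg level, using the cone in the pretriangulated $\calD_A$, so that the cone construction is functorial rather than merely defined up to non-unique isomorphism.

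For (B) I will argue dually. Under \hyperlink{ass:R}{(R)} the bimodule becomes left representable, $M(V,X)\simeq \Hom^\bullet_{\calD}(T(V),X)$. Then $\Hom(E,(0,Y,0))$ is computed by the complex $\Hom^\bullet(X,Y)\oplus M^{\bullet-1}(V,Y)$ with differential twisted by $\phi$, i.e. by its adjoint $\tilde\phi\colon T(V)\to X$. This complex is $\Hom^\bullet(\Cone(T(V)\to X),Y)$, and taking $H^0$ identifies the left adjoint as $\Cone(T(V)\xrightarrow{\tilde\phi}X)$. The only real work is matching signs in the twisted differential with the cone's differential, which I expect to be routine.
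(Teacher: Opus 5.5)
Your proposal is correct and follows essentially the same route as the paper. In both parts you write the dg Hom complex from $(W,0,0)$, respectively to $(0,Y,0)$, as a shifted cone of composition with $\phi$, using the dg adjunction $T \dashv F$ in (B). You then identify it with the Hom complex into, respectively out of, the corresponding cone, and take $H^0$.

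Your opening paragraph on the semiorthogonal triangle and the gluing functor is not used in the argument. Your remark that naturality follows from dg-level functoriality of the cone makes explicit a point the paper leaves implicit.
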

\begin{proof}
We will first prove \textup{(A)}. 
 Let $W\in\bounded(\modd A)$ and let $(V,X,\phi)$ be an object of $[\calD_{E_0}^A]$. By the definition of the morphisms in the dg-comma category, we have
\begin{align*}
\Hom^\bullet_{\calD_{E_0}^A} (i_{\calD_A}(W),(V,X,\phi))
&= \Hom^\bullet_{\calD_A}(W,V) \oplus \Hom^{\bullet-1}_{\calD_A}(W,F(X)) \\
&= \Cone\left( \Hom^\bullet_{\calD_A}(W,V) \xlongrightarrow{\phi\circ -} \Hom^\bullet_{\calD_A}(W,F(X)) \right)[-1].
\end{align*}
where $\calD_A$ is a dg-enhancement of $\bounded(\modd A)$.
In addition,
\begin{align*}
\Cone\left( \Hom^\bullet_{\calD_A}(W,V) \xrightarrow{\phi\circ -} \Hom^\bullet_{\calD_A}(W,F(X)) \right)[-1] \\
= \Hom^\bullet_{\calD_A} \left( W, \Cone\left( V\xrightarrow{\phi}F(X) \right)[-1] \right).
\end{align*}
In degree-zero cohomology, the inclusion $[i_{\calD_A]}=i_{\bounded(\modd A)}$ admits a right adjoint given by
\begin{align*}
i_{\bounded(\modd A)}^!(V,X,\phi) = \Cone(V\xlongrightarrow{\phi} F(X))[-1].
\end{align*}
In particular, for $X\in[\calD]$, we have $i_{\bounded(\modd A)}^!(X)=F(X)[-1]$. 

For (B), we consider $Y\in[\calD]$ and by the definition of the morphisms we have
\begin{align*}
\Hom^\bullet_{\calD_{E_0}^A} ((V,X,\phi),i_{[\calD]}(Y))
&= \Hom^\bullet_{\calD}(X,Y) \oplus \Hom^{\bullet-1}_{\calD_A}(V,F(Y)) \\
&= \Cone\left( \Hom^\bullet_{\calD}(X,Y) \xlongrightarrow{-F(-)\circ\phi} \Hom^\bullet_{\calD_A}(V,F(Y)) \right)[-1].
\end{align*}
Since the dg-functor inducing $T$ is left adjoint to $F$, there is a quasi-isomorphism of complexes
\begin{align*}
\Hom^\bullet_{\calD_A}(V,F(Y)) \simeq \Hom^\bullet_{\calD}(T(V),Y).
\end{align*}
Under this quasi-isomorphism, and using the fact that the map $T(V)\to X$ is adjoint to $\phi$, we obtain
\begin{align*}
\Hom^\bullet_{\calD_{E_0}^A}((V,X,\phi),i_{[\calD]}(Y)) \simeq \Hom^\bullet_{\calD}\left(\Cone(T(V)\longrightarrow X),Y\right).
\end{align*}
In degree-zero cohomology we have
\begin{align*}
\Hom_{[\calD_{E_0}^A]}((V,X,\phi),i_{[\calD]}(Y)) \cong \Hom_{[\calD]}\left(\Cone(T(V)\longrightarrow X),Y\right).
\end{align*}
Thus, $i_{[\calD]}$ admits a left adjoint given by
\begin{align*}
i_{[\calD]}^*(V,X,\phi) = \Cone(T(V)\longrightarrow X).
\end{align*}
In particular, for $V\in \modd A$, we have $i_{[\calD]}^*(V)=T(V)[1]$.
\end{proof}

\begin{remark}
Assumption \hyperlink{ass:L}{\textup{(L)}} is satisfied if $\calD$ is smooth, proper, and idempotent-complete, since $F$ then has bounded $t$-amplitude by \cite[Proposition 3.1]{paths}. For \hyperlink{ass:R}{\textup{(R)}}, if in addition $A$ has finite global dimension, then the left adjoint is induced at the dg level by the derived tensor product $T(V)=V\otimes_A^{\mathbb L}E_0$. The bounded $t$-amplitude of $T$ gives the required integer $m$.
\end{remark}

In Proposition \ref{finalgluing}, we will apply Lemma \ref{mutationgluing} and Lemma \ref{commamutations} in order to prove that the t-structures induced by the pair of stability conditions
\begin{align*}
    (L_{\bounded(\modd A)}(\sigma_{\calB})[s],\sigma_A), \qquad (\sigma_{\calB},R_{[\calD]}(\sigma_A)[s])
\end{align*}
on the semiorthogonal decompositions 
\begin{align*}
    \left\langle \LRM_{\bounded(\modd A)}([\calD]), \bounded(\modd A) \right\rangle,\qquad\left\langle [\calD], \RRM_{[\calD]}(\bounded(\modd A)) \right\rangle
\end{align*}
 glue for sufficiently shifted integers $s$.

We consider a special case of condition \hyperlink{ass:CP2}{\textup{(CP2)}} of Proposition \ref{gluingg} suited to our setting. 

\begin{corollary} \label{hnglue}
 Let $\sigma_A=(\calA_0, Z_{\calA})$ and $\sigma_{\calB}=(\calB_0, Z_{\calB})$ be stability conditions on the components $\calA$ and $\calB$ of the semiorthogonal decomposition $\calD= \langle \calA, \calB \rangle$ such that 
 \begin{enumerate}[ leftmargin=0pt, labelsep=0.5em, itemindent=!, align=left ]
     \item  [\hypertarget{ass:CP11}{\textup{(CP1)}}] $\Hom^{\leq 0}_{\calD}(\calA_0, \calB_0)=0$ 
     \item  [\hypertarget{ass:CP4}{\textup{(CP4)}}]  $\phi^{-}_{\calB}(B) \ge \phi^{+}_{\calA}(A)$ for all nonzero 
$B\in\calB_0$ and $A\in\calA_0$. 
 \end{enumerate}
 Then $\sigma_A$ and $\sigma_{\calB}$ glue to a stability condition $(\glue, Z_{\gl})$ on $\calD$. 
\end{corollary}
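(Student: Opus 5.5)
The plan is to deduce the statement from Proposition \ref{gluingg} by way of Proposition \ref{popp}. Since (CP1) is assumed, it suffices to verify (CP3) for a suitable $\alpha\in(0,1)$. Proposition \ref{popp} then yields (CP2), and Proposition \ref{gluingg} produces the glued stability condition with heart $\glue=\calA_0\times_{M_0}\calB_0$ and central charge $Z_{\gl}=Z_{\calA}+Z_{\calB}$.

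\textbf{Choosing $\alpha$.} Condition (CP4) compares phases of all nonzero objects of the two hearts, so in particular it compares all semistable ones. Set
\begin{align*}
a\coloneqq\inf\{\phi_{\calA}(A)\mid A\in\calA_0 \text{ nonzero semistable}\},\qquad b\coloneqq\sup\{\phi_{\calB}(B)\mid B\in\calB_0\text{ nonzero semistable}\}.
\end{align*}
Condition (CP4) gives $\phi_{\calB}(B)\geq\phi_{\calA}(A)$ for every such pair, hence $a\le \phi_{\calB}(B)$ for every semistable $B$.

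\textbf{Verifying (CP3).} Given semistable $A\in\calA_0$, $B\in\calB_0$ with $\phi_{\calB}(B)\le\alpha<\phi_{\calA}(A)$, we get $\phi_{\calB}(B)<\phi_{\calA}(A)$, which contradicts (CP4). So the hypothesis of (CP3) is never met as soon as both hearts are nonzero, and (CP3) holds vacuously for every $\alpha\in(0,1)$. If one heart is zero, the implication is again vacuous. Thus any $\alpha$, say $\alpha=1/2$, works, and we do not actually need $a$ and $b$.

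\textbf{Checking Proposition \ref{popp} in this setting.} The main point is to confirm that Proposition \ref{popp} applies here without hidden hypotheses. Its proof reduces (CP2) to (CP1) together with (CP3) for semistable objects in shifted hearts. Case 4 of that proof uses $B_0\in\calP_{\calB}(\psi-1)$ with $\psi-1\le\alpha<\phi$, and it is exactly this configuration that (CP4) rules out. The phase ordering also gives an alternative direct route: by Theorem \ref{sectorbound}, every glued object $(A,B,\mu)$ has $\phi^+_{\calA}(A)\le\phi^-_{\calB}(B)$, so condition (S) holds trivially with any $\epsilon$. However, that route needs the HN property, which Proposition \ref{gluingg} supplies via \cite[Theorem 3.6]{gluing1}. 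I would therefore present the argument through Proposition \ref{popp} and Proposition \ref{gluingg}.
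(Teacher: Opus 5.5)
Your proposal is correct and matches the paper's proof: (CP4) means the hypothesis of (CP3) can never hold for nonzero semistable objects, so (CP3) holds vacuously for every $\alpha\in(0,1)$, and Propositions \ref{popp} and \ref{gluingg} then give the glued stability condition. The quantities $a,b$ and the aside about Theorem \ref{sectorbound} are not needed, and the inequality $\phi^+_{\calA}(A)\le\phi^-_{\calB}(B)$ you quote there follows directly from (CP4), not from that theorem.
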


\begin{proof}
By condition \hyperlink{ass:CP11}{\textup{(CP1)}} the $t$-structures induced by $\sigma_A$ and $\sigma_{\calB}$ glue as in Proposition \ref{gluedheart}. We will verify condition \hyperlink{ass:CP3}{\textup{(CP3)}} of Proposition \ref{popp}. 
We consider nonzero semistable objects $A\in\calA_0, B\in\calB_0$. Since they are semistable, condition \hyperlink{ass:CP4}{\textup{(CP4)}} gives,
\begin{align*}
    \phi_{\calB}(B) =\phi_{\calB}^{-}(B) \geq\phi_{\calA}^{+}(A) =\phi_{\calA}(A).
\end{align*}
Thus, condition \hyperlink{ass:CP3}{\textup{(CP3)}} holds vacuously for any $\alpha \in (0,1)$.
\end{proof}

We return to the setting of the dg-comma category $[\calD_{E_0}^A]=\langle\bounded(\modd A),[\calD]\rangle $. With Corollary \ref{hnglue}, the support property of the glued stability conditions $(L_{\bounded(\modd A)}(\sigma_{\calB})[s],\sigma_A)$ and $(\sigma_{\calB}, R_{[\calD]}(\sigma_A)[s])$ follows easily. 

\begin{proposition}\label{finalgluing}
\medskip \noindent
\begin{enumerate}
\item
Assume that \hyperlink{ass:L}{\textup{(L)}} holds.
Then, for all integers $s\geq N+1$, the stability conditions
$\LRM_{\bounded(\modd A)}(\sigma_{\calB})[s]$ and $\sigma_A$
glue to a stability condition on
\begin{align*}
[\calD_{E_0}^A] = \left\langle \LRM_{\bounded(\modd A)}([\calD]), \bounded(\modd A) \right\rangle.
\end{align*}

\item
Assume that \hyperlink{ass:R}{\textup{(R)}} holds.
Then, for every integer $s\leq m$, the stability conditions
$\sigma_{\calB}$ and $\RRM_{[\calD]}(\sigma_A)[s]$
glue to a stability condition on
\begin{align*}
[\calD_{E_0}^A] = \left\langle [\calD], \RRM_{[\calD]}(\bounded(\modd A)) \right\rangle.
\end{align*}
\end{enumerate}
\end{proposition}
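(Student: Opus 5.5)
Both parts reduce to Corollary \ref{hnglue}. We first check (CP1) with Lemma \ref{mutationgluing} and Lemma \ref{commamutations}, and then check (CP4). The phase inequality (CP4) is where the shift $s$ is used, because every nonzero object of $\modd A$ has $\sigma_A$-phase exactly $1$.

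\emph{Part (1).} Take $\calA=\bounded(\modd A)$, which is right admissible by Lemma \ref{commamutations}(A), and $\calB=[\calD]$. For $X\in\calB_0$ we have $i^!_{\bounded(\modd A)}(X)=F(X)[-1]$. By (L), $F(X)\in\bounded(\modd A)^{\le N}$, so $F(X)[-1]\in\bounded(\modd A)^{\le N+1}$. Lemma \ref{mutationgluing}(A) with $m=N+1$ then shows that the heart $\LRM(\calB_0[s])$ and $\modd A$ glue for $s\ge N+1$; this is (CP1) for the decomposition $\langle \LRM_{\bounded(\modd A)}([\calD]),\bounded(\modd A)\rangle$. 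Here the first component plays the role of ``$\calA$'' in Corollary \ref{hnglue} and $\modd A$ plays the role of ``$\calB$''. We define the mutated stability condition $\LRM(\sigma_{\calB})[s]$ by transporting $\sigma_{\calB}$ along the equivalence $\LRM$ and then shifting, so its heart is $\LRM(\calB_0[s])$ and its phases are those of $\sigma_{\calB}$ on $\calB_0$, lying in $(0,1]$. Then (CP4) asks that $\phi^-(V)=1\ge\phi^+(\LRM(X[s]))$ for nonzero $V\in\modd A$. This holds because every object of the heart $\LRM(\calB_0[s])$ has all its HN phases in $(0,1]$. Corollary \ref{hnglue} then gives the glued stability condition.

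\emph{Part (2).} By (R) and Lemma \ref{commamutations}(B), $[\calD]$ is left admissible, and for $V\in\modd A$ we have $i^*_{[\calD]}(V)=T(V)[1]\in[\calD]^{\ge m}$. Lemma \ref{mutationgluing}(B) then gives (CP1) for $\langle[\calD],\RRM(\bounded(\modd A))\rangle$ with hearts $\calB_0$ and $\RRM(\modd A[s])$, for all $s\le m$. Now the roles are reversed: the first component is $[\calD]$ with heart $\calB_0$, and the second carries $\RRM(\sigma_A)[s]$. Its heart objects are $\RRM(V[s])$, which are semistable of phase $1$ with respect to $\RRM(\sigma_A)[s]$ when the heart is taken as the $(0,1]$ slice. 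So (CP4) reads $\phi^-(\RRM(V[s]))=1\ge\phi^+_{\calB}(X)$ for $X\in\calB_0$, and this is automatic.

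\emph{Expected obstacle.} The delicate point is bookkeeping. We must confirm that ``$L(\sigma_{\calB})[s]$'' is normalized so that its heart is exactly the one for which Lemma \ref{mutationgluing} proves gluing, with phases in $(0,1]$, and we must make the order of components in Corollary \ref{hnglue} match each decomposition. If instead the shifted slicing has phases shifted by $s$, then (CP4) has to be read relative to the slicing $\calP(0,1]$ that defines the heart appearing in (CP1). We will fix the convention that the hearts in Lemma \ref{mutationgluing} are the $(0,1]$ slices and check that Corollary \ref{hnglue} only uses phases relative to those hearts. Once this is fixed, each inequality is immediate from the fact that everything in $\modd A$ has phase $1$, the top of the interval.
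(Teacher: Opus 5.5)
Your proof is correct and follows the paper's argument. The identity $i^!_{\bounded(\modd A)}(X)=F(X)[-1]\in\bounded(\modd A)^{\le N+1}$ lets Lemma \ref{mutationgluing}(A) apply with $m=N+1$, giving (CP1), and (CP4) of Corollary \ref{hnglue} is automatic because every nonzero object of $\modd A$ has phase $1$. The paper also invokes left $t$-exactness of $F$, which is not needed, and leaves part (2) as ``similar''; you carry part (2) out correctly via $i^*_{[\calD]}(V)=T(V)[1]$.

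One small correction to your overview: the shift $s$ enters only through the (CP1) step via Lemma \ref{mutationgluing}. (CP4) holds for every $s$, as your own detailed argument shows.
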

\begin{proof}
We will prove (1) and (2) follows similarly. As in Lemma
\ref{commamutations}, for $X\in[\calD]$ we have
\begin{align*}
i_{\bounded(\modd A)}^!(X)=F(X)[-1].
\end{align*}
By Lemma \ref{leftexactness}, the functor $F$ is left $t$-exact which implies that
$F(\calB_0)\subseteq \bounded(\modd A)^{\geq 0}$.
Together with \hyperlink{ass:L}{\textup{(L)}}, we obtain $
F(\calB_0)\subseteq \bounded(\modd A)^{[0,N]}.$
Therefore,
\begin{align*}
i_{\bounded(\modd A)}^!(\calB_0) = F(\calB_0)[-1] \subseteq \bounded(\modd A)^{\leq N+1}.
\end{align*}
Thus Lemma \ref{mutationgluing} applies with $m=N+1$. For all
integers $s\geq N+1$, the t-structures on
\begin{align*}
\left\langle \LRM_{\bounded(\modd A)}([\calD]), \bounded(\modd A) \right\rangle
\end{align*}
glue.
It remains to check the phase condition in Corollary \ref{hnglue}. Let
$X\in \LRM_{\bounded(\modd A)}(\calB_0[s])$ and $V\in \modd A$ be nonzero. Since $X$ lies in the heart of
$\LRM_{\bounded(\modd A)}(\sigma_{\calB})[s]$, we have $
\phi^+(X)\leq 1.$ Every nonzero object of $\modd A$ has phase $1$ with respect to $\sigma_A$ which implies that
$\phi^-_{\sigma_A}(V)=1. $
Therefore $ \phi^-_{\sigma_A}(V) \geq \phi^+(X)$ and Corollary \ref{hnglue} applies.
\end{proof}

\begin{example}
In the case of coherent systems as in \cite{systems}, the decomposition
\begin{align*}
D(T_C) = \langle i_*D(V),j_*D(C)\rangle
\end{align*}
corresponds to the original dg-comma gluing with $E_0=\mathcal O_C$ and $A=k$. The left mutation of this decomposition is
\begin{align*}
D(T_C) = \langle L_{[\mathcal{O}_{C}\to 0]}j_*D(C),i_*D(V)\rangle.
\end{align*}
This is the same type of mutation as in Lemma \ref{commamutations} \textup{(A)}.
Similarly, the \textit{second gluing type} stability conditions in \cite[\S 7]{systems} correspond to the decomposition
\begin{align*}
D(T_C) = \langle j_*D(C),i'_*D(V)\rangle
\end{align*}
which is related to the right-mutated picture in Lemma \ref{commamutations} \textup{(B)}.
\end{example}
Related gluing constructions for mutation-equivalent semiorthogonal decompositions of blow-up surfaces are studied in \cite[\S 4.1--4.3]{karube}. 

\bibliography{RSbib}

@incollection{lecturesmacri,
 author = {Macr{\`{\i}}, Emanuele and Schmidt, Benjamin},
 title = {Lectures on {Bridgeland} stability},
 booktitle = {Moduli of curves. CIMAT Guanajuato, Mexico 2016. Lecture notes of a CIMPA-ICTP school, Guanajuato, Mexico, February 22 -- March 4, 2016},
 isbn = {978-3-319-59485-9; 978-3-319-59486-6},
 pages = {139--211},
 year = {2017},
 publisher = {Cham: Springer},
 language = {English},
 doi = {10.1007/978-3-319-59486-6_5},
 zbMATH = {6846365},
 Zbl = {1391.14001}
}

@article{bridgeog,
 author = {Bridgeland, Tom},
 title = {Stability conditions on triangulated categories},
 fjournal = {Annals of Mathematics. Second Series},
 journal = {Ann. Math. (2)},
 issn = {0003-486X},
 volume = {166},
 number = {2},
 pages = {317--345},
 year = {2007},
 language = {English},
 doi = {10.4007/annals.2007.166.317},
 zbMATH = {5248860},
 Zbl = {1137.18008}
}

@misc{commaDG,
  author       = {Oliveira, Ellen de and Neulaender, Guido},
  title        = {Stability Conditions on Abelian Comma Categories},
  year         = {2025},
  howpublished = {Preprint, arXiv:2510.25450},
  url          = {https://arxiv.org/abs/2510.25450}
}

@article{Kuznetsov_2014,
 author = {Kuznetsov, Alexander and Lunts, Valery A.},
 title = {Categorical resolutions of irrational singularities},
 fjournal = {IMRN. International Mathematics Research Notices},
 journal = {Int. Math. Res. Not.},
 issn = {1073-7928},
 volume = {2015},
 number = {13},
 pages = {4536--4625},
 year = {2015},
 language = {English},
 doi = {10.1093/imrn/rnu072},
 url = {citeseerx.ist.psu.edu/viewdoc/summary?doi=10.1.1.911.1450},
 zbMATH = {6465114},
 Zbl = {1338.14020}
}

@incollection{support,
 author = {Kontsevich, Maxim and Soibelman, Yan},
 title = {Motivic {Donaldson}-{Thomas} invariants: summary of results},
 booktitle = {Mirror symmetry and tropical geometry. Proceedings of the NSF-CBMS conference on tropical geometry and mirror symmetry, Manhattan, KS, USA, December 13--17, 2008.},
 isbn = {978-0-8218-4884-5},
 pages = {55--89},
 year = {2010},
 publisher = {Providence, RI: American Mathematical Society (AMS)},
 language = {English},
 zbMATH = {5872146},
 Zbl = {1214.14014}
}

@article{aenhanced,
 author = {Canonaco, Alberto and Neeman, Amnon and Stellari, Paolo},
 title = {Uniqueness of enhancements for derived and geometric categories},
 fjournal = {Forum of Mathematics, Sigma},
 journal = {Forum Math. Sigma},
 issn = {2050-5094},
 volume = {10},
 pages = {65},
 note = {Id/No e92},
 year = {2022},
 language = {English},
 doi = {10.1017/fms.2022.82},
 zbMATH = {7609812},
 Zbl = {1524.14038}
}

@article{gluing1,
 author = {Collins, John and Polishchuk, Alexander},
 title = {Gluing stability conditions},
 fjournal = {Advances in Theoretical and Mathematical Physics},
 journal = {Adv. Theor. Math. Phys.},
 issn = {1095-0761},
 volume = {14},
 number = {2},
 pages = {563--607},
 year = {2010},
 language = {English},
 doi = {10.4310/ATMP.2010.v14.n2.a6},
 zbMATH = {5843204},
 Zbl = {1210.18011}
}

@misc{comma1,
 author = {Marcos Jardim and Leonardo Roa {Leguizam{\'o}n} and Renato Vidal Martins},
 title = {Stability conditions for coherent systems on {Integral} {Curves}},
 year = {2025},
 howpublished = {Preprint, {arXiv}:2511.12610 [math.{AG}] (2025)},
 url = {https://arxiv.org/abs/2511.12610},
 arXiv = {arXiv:2511.12610}
}

@misc{systems,
 author = {Feyzbakhsh, Soheyla and Novik, Aliaksandra},
 title = {Derived category of coherent systems on curves and stability conditions},
 year = {2025},
 howpublished = {Preprint, {arXiv}:2511.01601 [math.{AG}] (2025)},
 url = {https://arxiv.org/abs/2511.01601},
 arXiv = {arXiv:2511.01601}
}

@article{mutations,
 author = {Kuznetsov, Alexander and Perry, Alexander},
 title = {Derived categories of cyclic covers and their branch divisors},
 fjournal = {Selecta Mathematica. New Series},
 journal = {Sel. Math., New Ser.},
 issn = {1022-1824},
 volume = {23},
 number = {1},
 pages = {389--423},
 year = {2017},
 language = {English},
 doi = {10.1007/s00029-016-0243-0},
 zbMATH = {6681477},
 Zbl = {1365.14021}
}

@misc{kawatani,
 author = {Kawatani, Kotaro},
 title = {On a deformation of gluing stability conditions},
 year = {2021},
 howpublished = {Preprint, {arXiv}:2107.13367 [math.{AG}] (2021)},
 url = {https://arxiv.org/abs/2107.13367},
 arXiv = {arXiv:2107.13367}
}

@book{ogcomma,
 author = {Fossum, Robert M. and Griffith, Phillip A. and Reiten, Idun},
 title = {Trivial extensions of {Abelian} categories. {Homological} algebra of trivial extensions of {Abelian} categories with applications to ring theory},
 fseries = {Lecture Notes in Mathematics},
 series = {Lect. Notes Math.},
 issn = {0075-8434},
 volume = {456},
 year = {1975},
 publisher = {Springer, Cham},
 language = {English},
 doi = {10.1007/bfb0065404},
 zbMATH = {3473879},
 Zbl = {0303.18006}
}

@article{ex1,
 author = {Hu, Jiangsheng and Zhu, Haiyan},
 title = {Special precovering classes in comma categories},
 fjournal = {Science China. Mathematics},
 journal = {Sci. China, Math.},
 issn = {1674-7283},
 volume = {65},
 number = {5},
 pages = {933--950},
 year = {2022},
 language = {English},
 doi = {10.1007/s11425-020-1790-9},
 zbMATH = {7517807},
 Zbl = {1485.18001}
}

@article{ex2,
 author = {Tang, Guoliang},
 title = {Abelian model structures on comma categories},
 fjournal = {Ukrainian Mathematical Journal},
 journal = {Ukr. Math. J.},
 issn = {0041-5995},
 volume = {76},
 number = {3},
 pages = {416--425},
 year = {2024},
 language = {English},
 doi = {10.1007/s11253-024-02328-5},
 zbMATH = {7916737},
 Zbl = {1555.16006}
}

@article{ex3,
 author = {Chen, Xiao-Wu and Le, Jue},
 title = {Recollements, comma categories and morphic enhancements},
 fjournal = {Proceedings of the Royal Society of Edinburgh. Section A. Mathematics},
 journal = {Proc. R. Soc. Edinb., Sect. A, Math.},
 issn = {0308-2105},
 volume = {152},
 number = {3},
 pages = {567--591},
 year = {2022},
 language = {English},
 doi = {10.1017/prm.2021.8},
 zbMATH = {7535674},
 Zbl = {1497.18021}
}

@article{ex4,
 author = {Hu, Dongdong},
 title = {Cotorsion pairs in extensions of abelian categories},
 fjournal = {Bulletin of the Malaysian Mathematical Sciences Society. Second Series},
 journal = {Bull. Malays. Math. Sci. Soc. (2)},
 issn = {0126-6705},
 volume = {49},
 number = {1},
 pages = {28},
 note = {Id/No 58},
 year = {2026},
 language = {English},
 doi = {10.1007/s40840-026-02057-x},
 zbMATH = {8167332}
}

@article{ex5,
author = {Das, Suddhasattwa},
title = {Functors Induced by Comma Categories},
journal = {Modern Mathematical Methods},
volume = {3},
number = {3},
year = {2025},
doi = {10.64700/mmm.76},
eprint = {2401.14059},
archivePrefix = {arXiv},
primaryClass = {math.CT}
}

@incollection{sodref,
 author = {Bondal, A. and Orlov, D.},
 title = {Derived categories of coherent sheaves},
 booktitle = {Proceedings of the International Congress of Mathematicians, ICM 2002, Beijing, China, August 20--28, 2002. Vol. II: Invited lectures},
 isbn = {7-04-008690-5},
 pages = {47--56},
 year = {2002},
 publisher = {Beijing: Higher Education Press; Singapore: World Scientific/distributor},
 language = {English},
 zbMATH = {1789952},
 Zbl = {0996.18007}
}

@misc{augmentations,
 author = {Alexeev, Valery and Kuznetsov, Aleksandr Gennad'evich},
 title = {Augmentations, reduced ideal point gluings and compact type degenerations of curves},
 year = {2026},
 howpublished = {Preprint, {arXiv}:2509.12429 [math.{AG}] (2026)},
 url = {https://arxiv.org/abs/2509.12429},
 arXiv = {arXiv:2509.12429}
}

@misc{holomorphic,
 author = {Romero, Eva Mart{\'i}nez and Hidalgo, Alejandra Rinc{\'o}n and R{\"u}ffer, Arne},
 title = {Bridgeland stability conditions on the category of holomorphic triples over curves},
 year = {2019},
 howpublished = {Preprint, {arXiv}:1905.04240 [math.{AG}] (2019)},
 url = {https://arxiv.org/abs/1905.04240},
 arXiv = {arXiv:1905.04240},
}

@article{cone,
 author = {Bayer, Arend and Macr{\`{\i}}, Emanuele and Stellari, Paolo},
 title = {The space of stability conditions on abelian threefolds, and on some {Calabi}-{Yau} threefolds},
 fjournal = {Inventiones Mathematicae},
 journal = {Invent. Math.},
 issn = {0020-9910},
 volume = {206},
 number = {3},
 pages = {869--933},
 year = {2016},
 language = {English},
 doi = {10.1007/s00222-016-0665-5},
 zbMATH = {6664764},
 Zbl = {1360.14057}
}

@article{morphisms,
 author = {Kawatani, Kotaro},
 title = {Stability conditions on morphisms in a category},
 fjournal = {Kyoto Journal of Mathematics},
 journal = {Kyoto J. Math.},
 issn = {2156-2261},
 volume = {62},
 number = {3},
 pages = {485--521},
 year = {2022},
 language = {English},
 doi = {10.1215/21562261-2022-0014},
 zbMATH = {7608272},
 Zbl = {1510.18004}
}

@article{mutationgluing,
 author = {Sun, Yongliang and Zhang, Yaohua},
 title = {Gluing simple-minded collections in triangulated categories},
 fjournal = {Journal of Algebra},
 journal = {J. Algebra},
 issn = {0021-8693},
 volume = {645},
 pages = {54--85},
 year = {2024},
 language = {English},
 doi = {10.1016/j.jalgebra.2024.01.032},
 zbMATH = {7812448},
 Zbl = {1540.18017}
}

@article{gluing11,
 author = {Polishchuk, Alexander},
 title = {Gluing of perverse sheaves on the basic affine space. {With} an appendix by {R}. {Bezrukavnikov} and the author.},
 fjournal = {Selecta Mathematica. New Series},
 journal = {Sel. Math., New Ser.},
 issn = {1022-1824},
 volume = {7},
 number = {1},
 pages = {83--147},
 year = {2001},
 language = {English},
 doi = {10.1007/PL00001399},
 zbMATH = {1684836},
 Zbl = {1077.14517}
}

@article{gluing22,
 author = {Bezrukavnikov, Roman and Braverman, Alexander and Positselskii, Leonid},
 title = {Gluing of {Abelian} categories and differential operators on the basic affine space.},
 fjournal = {Journal of the Institute of Mathematics of Jussieu},
 journal = {J. Inst. Math. Jussieu},
 issn = {1474-7480},
 volume = {1},
 number = {4},
 pages = {543--557},
 year = {2002},
 language = {English},
 doi = {10.1017/S1474748002000154},
 zbMATH = {2011908},
 Zbl = {1044.16020}
}

@misc{stacks-project,
  author       = {The {Stacks project authors}},
  title        = {The Stacks project},
  howpublished = {\url{https://stacks.math.columbia.edu}},
  year         = {2026},
}

@misc{paths,
 author = {Leistner, Daniel Halpern- and Jiang, Jeffrey and Robotis, Antonios-Alexandros},
 title = {Quasi-convergence of stability conditions},
 year = {2026},
 howpublished = {Preprint, {arXiv}:2401.00600 [math.{AG}] (2026)},
 url = {https://arxiv.org/abs/2401.00600},
 arXiv = {arXiv:2401.00600}
}

@article{propercon,
 author = {Kuznetsov, Alexander and Shinder, Evgeny},
 title = {Homologically finite-dimensional objects in triangulated categories},
 fjournal = {Selecta Mathematica. New Series},
 journal = {Sel. Math., New Ser.},
 issn = {1022-1824},
 volume = {31},
 number = {2},
 pages = {45},
 note = {Id/No 27},
 year = {2025},
 language = {English},
 doi = {10.1007/s00029-024-01004-7},
 zbMATH = {7993827},
 Zbl = {1573.14059}
}

@article{identification,
 author = {Koenig, Steffen and Yang, Dong},
 title = {Silting objects, simple-minded collections, {{\(t\)}}-structures and co-{{\(t\)}}-structures for finite-dimensional algebras.},
 fjournal = {Documenta Mathematica},
 journal = {Doc. Math.},
 issn = {1431-0635},
 volume = {19},
 pages = {403--438},
 year = {2014},
 language = {English},
 doi = {10.4171/dm/451},
 zbMATH = {6269772},
 Zbl = {1350.16010}
}

@article{cluster,
 author = {Ringel, Claus Michael},
 title = {Cluster-concealed algebras.},
 fjournal = {Advances in Mathematics},
 journal = {Adv. Math.},
 issn = {0001-8708},
 volume = {226},
 number = {2},
 pages = {1513--1537},
 year = {2011},
 language = {English},
 doi = {10.1016/j.aim.2010.08.014},
 zbMATH = {5835540},
 Zbl = {1238.16017}
}

@article{matrices,
 author = {Drozd, Yu. A.},
 title = {Matrix problems and categories of matrices},
 fjournal = {Journal of Soviet Mathematics},
 journal = {J. Sov. Math.},
 issn = {0090-4104},
 volume = {3},
 pages = {692--699},
 year = {1972},
 language = {English},
 doi = {10.1007/BF01084669},
 zbMATH = {3533096},
 Zbl = {0342.15011}
}

@article{sodsod,
 author = {Ladkani, Sefi},
 title = {Derived equivalences of triangular matrix rings arising from extensions of tilting modules},
 fjournal = {Algebras and Representation Theory},
 journal = {Algebr. Represent. Theory},
 issn = {1386-923X},
 volume = {14},
 number = {1},
 pages = {57--74},
 year = {2011},
 language = {English},
 doi = {10.1007/s10468-009-9175-0},
 zbMATH = {5903795},
 Zbl = {1228.18010}
}

@article{details,
  author  = {Miranda, Martha Lizbeth Shaid Sandoval and
             Vargas, Valente Santiago and
             P{\'a}ez, Edgar Omar Velasco},
  title   = {Differential graded triangular matrix categories},
  journal = {Communications in Algebra},
  volume  = {53},
  number  = {11},
  pages   = {4858--4886},
  year    = {2025},
  doi     = {10.1080/00927872.2025.2499069}
}

@misc{karube,
 author = {Tomohiro Karube},
 title = {The noncommutative {MMP} for blowup surfaces},
 year = {2024},
 howpublished = {Preprint, {arXiv}:2410.18446 [math.{AG}] (2024)},
 url = {https://arxiv.org/abs/2410.18446},
 arXiv = {arXiv:2410.18446}
}

@misc{mmp,
 author = {Tomohiro Karube and Antonios-Alexandros Robotis and Vanja Zuliani},
 title = {Toward the noncommutative minimal model program for {Fano} varieties},
 year = {2026},
 howpublished = {Preprint, {arXiv}:2601.20739 [math.{AG}] (2026)},
 url = {https://arxiv.org/abs/2601.20739},
 arXiv = {arXiv:2601.20739}
}

@book{definable,
 author = {Kashiwara, Masaki and Schapira, Pierre},
 title = {Categories and sheaves},
 fseries = {Grundlehren der Mathematischen Wissenschaften},
 series = {Grundlehren Math. Wiss.},
 issn = {0072-7830},
 volume = {332},
 isbn = {3-540-27949-0},
 year = {2006},
 publisher = {Berlin: Springer},
 language = {English},
 doi = {10.1007/3-540-27950-4},
 zbMATH = {2193967},
 Zbl = {1118.18001}
}

@misc{hirzebruch,
 author = {Yusuke Ohmiya},
 title = {Wall in the stability space of the gluing stability conditions on {Hirzebruch} surfaces},
 year = {2026},
 howpublished = {Preprint, {arXiv}:2601.01063 [math.{AG}] (2026)},
 url = {https://arxiv.org/abs/2601.01063},
 arXiv = {arXiv:2601.01063}
}

@article{ruledsurfaces,
 author = {Uchiba, Takayuki},
 title = {On gluing stability conditions on ruled surfaces of positive genus},
 fjournal = {Osaka Journal of Mathematics},
 journal = {Osaka J. Math.},
 issn = {0030-6126},
 volume = {58},
 number = {3},
 pages = {647--660},
 year = {2021},
 language = {English},
 url = {projecteuclid.org/ojm/5006},
 zbMATH = {7402992},
 Zbl = {1470.14039}
}

@misc{haiden,
      title={A counterexample to the {Jordan-H\"older} property for polarizable semiorthogonal decompositions}, 
      author={Fabian Haiden and Dongjian Wu},
      year={2025},
      howpublished = {Preprint, {arXiv}:2502.12075 [math.{RT}] (2025)},
      eprint={2502.12075},
      archivePrefix={arXiv},
      primaryClass={math.RT},
      url={https://arxiv.org/abs/2502.12075}, 
}

@misc{augmented,
 author = {Leistner, Daniel Halpern- and Antonios-Alexandros Robotis},
 title = {The {Space} of augmented stability conditions},
 year = {2026},
 howpublished = {Preprint, {arXiv}:2501.00710 [math.{AG}] (2026)},
 url = {https://arxiv.org/abs/2501.00710},
 arXiv = {arXiv:2501.00710}
}

@misc{pprod,
 author = {Yu-Wei Fan},
 title = {Special {Lagrangians} and {Bridgeland} stable objects beyond geometric stability conditions: the product case},
 year = {2026},
 howpublished = {Preprint, {arXiv}:2602.03041 [math.{SG}] (2026)},
 url = {https://arxiv.org/abs/2602.03041},
 arXiv = {arXiv:2602.03041}
}
\bibliographystyle{alpha}

\Addresses

\end{document}